\numberwithin{equation}{section}
\def\cb{{\mathcal B}}
\def\ch{{\mathcal H}}
\def\ck{{\mathcal K}}
\def\ga{{\mathfrak A}}
\def\bc{{\mathbb C}}
\def\bn{{\mathbb N}}
\def\bt{{\mathbb T}}
\def\bz{{\mathbb Z}}
\def\a{\alpha}
\def\r{\rho}
\def\s{\sigma}
\def\om{\omega} \def\Om{\Omega}
\newtheorem{thm}{Theorem}[section]
\newtheorem{lem}[thm]{Lemma}
\newtheorem{cor}[thm]{Corollary}
\newtheorem{prop}[thm]{Proposition}
\theoremstyle{definition}
\newtheorem{rem}[thm]{Remark}
\def\dim{\mathop{\rm dim}}
\def\min{\mathop{\rm min}}
\def\di{\mathop{\rm d}\!}
\def\di{{\rm d}}
\begin{document}

\title[Weakly-monotone $C^*$-algebras as Exel-Laca algebras]
{Weakly-monotone $C^*$-algebras as Exel-Laca algebras}
\author{Vitonofrio Crismale}
\address{Vitonofrio Crismale\\
Dipartimento di Matematica\\
Universit\`{a} degli studi di Bari\\
Via E. Orabona, 4, 70125 Bari, Italy}
\email{\texttt{vitonofrio.crismale@uniba.it}}

\author{Simone Del Vecchio}
\address{Simone Del Vecchio\\
Dipartimento di Matematica\\
Universit\`{a} degli studi di Bari\\
Via E. Orabona, 4, 70125 Bari, Italy}
\email{\texttt{simone.delvecchio@uniba.it}}

\author{Stefano Rossi}
\address{Stefano Rossi\\
Dipartimento di Matematica\\
Universit\`{a} degli studi di Bari\\
Via E. Orabona, 4, 70125 Bari, Italy}
\email{\texttt{stefano.rossi@uniba.it}}

\author{Janusz Wysocza\'nski}
\address{Janusz Wysocza\'nski\\
Department of Mathematics\\
Wroclaw University\\
plac Grunwaldzki 2, 50--384 Wroclaw, Poland}
\email{\texttt{jwys@math.uni.wroc.pl }}


\begin{abstract}

An abstract characterization of  weakly monotone
$C^*$-algebras, namely the concrete $C^*$-algebras generated
by creators and annihilators acting on the so-called weakly monotone
Fock spaces, is given  in terms of (quotients of) suitable Exel-Laca algebras.
The weakly monotone $C^*$-algebra indexed by $\bn$ is shown to be a type-I
$C^*$-algebra and its representation theory is entirely determined, whereas
the  weakly monotone $C^*$-algebra indexed by $\bz$ is shown not to be of type $I$.

\vskip0.1cm\noindent \\
{\bf Mathematics Subject Classification}:  46L05, 46L45, 46L35\\
{\bf Key words}: Weakly monotone $C^*$-algebras, Cuntz-Krieger algebras, Exel-Laca algebras, type-$I$ $C^*$-algebras
\end{abstract}

\maketitle

\section{Introduction}

One of the most relevant notions in Classical Probability,  stochastic independence for $\s$-algebras and random variables
is intimately tied to the possibility of factorizing the underlying probability space into a product space.
In addition,  factorizing the probability space amounts to factorizing the Hilbert space of all its
square-integrable random variables into a Hilbert tensor product of suitable subspaces.
Quite remarkably, this is the only notion of independence encountered in Classical Probability.
Noncommutative Probability, by contrast, features several different ways in which stochastic variables can be independent,  and each of these notions leads to its own central limit theorem.
A thorough account
of the various forms of non-commutative independence can be found in {\it e.g.} \cite{Mu}.\\
For the purposes of the present paper, however, we can limit ourselves to mentioning the so-called
monotone independence, which was introduced in \cite{Lu1} and in \cite{Mu2}.
It was soon realized that the central limit distribution associated with monotone independence is
the arcsine law. This result was obtained in \cite{Mu2} in a $C^*$-algebraic setting for a sequence of
self-adjoint random variables which are monotone independent with respect to a given state.
An interesting application was also given in the aforementioned paper by looking at the so-called position operators acting on a suitably contrived Hilbert space, the monotone Fock space.\\
Shortly after, the so-called weakly monotone Fock space was defined in \cite{W05} along with a countable family of
annihilators and creators (further generalizations of weakly monotone independence such as the so-called bm-independence were introduced and studied in \cite{W07} and  \cite{W10}).
In the more recent paper \cite{CGW}, it is shown that the  $C^*$-subalgebras generated by a single annihilator are a family of monotone independent algebras with respect to the vacuum state, that
 any self-adjoint position operator has the Wigner distribution in the vacuum,
and that the distribution of any finite sum
of such position operators, which is given by  the monotone convolution \cite{Mupreprint} of the Wigner distribution with itself, is absolutely continuous with a support that can be described explicitly.
The distribution of the so-called non-symmetric position operator is no longer given by the Wigner law, being in fact a free Meixner \cite{SY}, as proved in \cite{CGW2}, where its monotone convolution is also studied.

As well as providing  models where monotone independence holds, the concrete $C^*$-algebras generated by
annihilators and creators on the monotone Fock spaces mentioned above are also worth
studying as $C^*$-algebras in their own right. For instance, in the recent paper
 \cite{CDR} the so-called (strictly) monotone $C^*$-algebra is shown to
be a universal $C^*$-algebra which can described abstractly in terms of generators and relations.
Among other things, this turns out to be useful to simplify the task of exhibiting its approximately
finite-dimensional $C^*$-algebra structure.\\
When weakly monotone Fock spaces are considered, the corresponding
concrete $C^*$-algebras are still liable to being characterized abstractly. One
marked difference, though, is that the approximately finite-dimensional structure can by no means be obtained, in that
the localized subalgebras are no longer matrix algebras, being more or less akin to the
Coburn algebra instead. This is the case of  weakly monotone $C^*$-algebras with  finitely many  generators, which in \cite{GW} are shown  to be isomorphic with (quotients of) suitable Cuntz-Krieger algebras.
These are a much studied class of $C^*$-algebras introduced in the celebrated paper \cite{C1981} as
universal $C^*$-algebras generated by nonzero partial isometries $S_i$, indexed by a \textit{finite} set $\Sigma$, with relations for their support projections $Q_i=S_i^*S_i$ and range projections $P_i=S_iS_i^*$, known as Cuntz-Krieger relations, given by a finite matrix $\displaystyle A=(a_{ij})_{i,j\in \Sigma}$, with entries $a_{ij}\in \{0, 1\}$:
\begin{eqnarray*}
&& P_iP_j = 0\quad \text{if} \quad i\neq j \\
&& Q_i=\sum_{j\in\Sigma} a_{ij}P_j\,.
\end{eqnarray*}
In this paper, we turn our attention to weakly monotone $C^*$-algebras with infinitely
many generators. Characterizing such algebras in
terms of generators and relations poses a slightly harder problem, in that relations involving
infinitely many terms may possibly show up.
One way to overcome this problem is  to make use of so-called Exel-Laca algebras, see \cite{EL0}, which can be
thought of generalized Cuntz-Krieger algebras arising from infinite matrices whose entries are all zeros and ones. That is exactly what we do to obtain Theorem \ref{charZ}, where
the weakly monotone $C^*$-algebra with generators indexed by $\bz$, $O_A$, is presented as the Exel-Laca algebra
associated with the infinite matrix  $A=(a_{i,j})_{i,j\in\bz}$ with $a_{i,j}=1$ if $i\geq j$ and $a_{i,j}=0$ otherwise.
Another possible strategy would be to apply the more recent approach developed in \cite{BdC}.\\
Interestingly, when the generators are indexed by $\bn$, the corresponding weakly monotone $C^*$-algebra, $O_{A_\bn}$,
can be handled more directly in terms of generators and relations because each defining relation only involves
finitely many terms. In fact, Proposition \ref{faithfulrep} and Corollary \ref{weaklyrel} provide a space-free
description of $O_{A_\bn}$ respectively as a quotient of an inductive limit of
Cuntz-Krieger $C^*$-algebras and the universal $C^*$-algebra with given generators and relations.\\
Changing the index set from $\bz$ to $\bn$ causes even more striking consequences to occur.
To begin with, the projection onto the vacuum vector does belong to (a natural representation of) $O_{A_\bn}$, whereas it does not
sit in $O_{A}$, Proposition \ref{vacumnotalg}.
Whether the projection onto the vacuum belongs or not to our weakly monotone
$C^*$-algebras has deep ramifications in the resulting representation theory.
Precisely, $O_{A}$ is not a $C^*$-algebra of type $I$, Corollary \ref{nottypeI}, meaning
$O_{A}$ has representations of all types.
By contrast, $O_{A_\bn}$ turns out to be a $C^*$-algebra of type $I$, Corollary \ref{typeI}, as we prove after
determining all irreducible representations, Theorem \ref{catalogue}.
Moreover, in Theorem \ref{dirin} we show how every representation $\pi$ of $O_{A_\bn}$ on a separable
Hilbert space may be decomposed into a direct integral of irreducible representations in a rather canonical
and explicit fashion, which is determined by the analogous decomposition of $\pi(s_0)$, where $s_0$ is the only normal
generator of $O_{A_\bn}$.
Lastly, substituting the index set $\bn$ for $\bz$ prevents the shift automorphism $\tau$ on $O_A$, whose ergodic properties are thorougly discussed in Subsection \ref{erg}, from acting on $O_{A_\bn}$.\\
Finally, the property of being type $I$ does not depend only on the chosen index set, for it also depends on
the monotone relations as well. In fact, in
Corollary \ref{nottypeIanti} we show that the so-called anti-monotone $C^*$-algebra on $\bn$ fails to be of type $I$.

\section{Weakly monotone $C^*$-algebra on $\bz$ }
We consider the Hilbert space $\ch: =\ell^2(\bz)$ endowed with its canonical
orthonormal basis $\{e_k: k\in\bz\}$. The weakly monotone Fock space on
$\ell^2(\bz)$,   which we denote  by $\mathcal{F}_{wm}(\ch)$,
is the closed subspace of the
full Fock space generated by simple tensors of the type
$$e_{i_1}\otimes e_{i_2}\otimes\cdots\otimes e_{i_k}\, \quad \textrm{with}\,\,\,
k\in\bn\,\,\, \textrm{and}\,\,\, i_1\geq i_2\geq\cdots\geq i_k\,.$$
The length of a simple tensor as above is just the number $k$, which will be sometimes referred to
as the number of particles of the vector itself.
Obviously, having length equal to zero corresponds to considering the vacuum vector $\Om$.
For every $k\geq 0$, we denote by $H_k\subset\mathcal{F}_{wm}(\ch)$ the closed subspace
generated by all possible simple tensors whose length is $k$.
The subspace $H_k$ will often be referred to as the $k$-particle subspace.
Notice that $H_0$ is the one-dimensional subspace spanned by $\Om$.
Observe that $\mathcal{F}_{wm}(\ch)$ decomposes into a direct sum as
$$\mathcal{F}_{wm}(\ch)=\bigoplus_{k=0}^\infty H_k\,.$$
We next recall how weakly monotone creators and annihilators are defined on $\mathcal{F}_{wm}(\ch)$.
For every $i\in\bz$, we consider the monotone creator $A^\dagger_i$ which acts by creating a particle in the state
$e_i$ whenever this is possible. More explicitly, the action of $A^\dagger_i$ on the orthonormal basis of
$\mathcal{F}_{wm}(\ch)$ is $A^\dagger_i \Om= e_i$ and
\begin{equation*}
A^\dagger_i\,e_{i_1}\otimes e_{i_2}\otimes\cdots\otimes e_{i_k}:=\left\{
\begin{array}{ll}
e_i\otimes e_{i_1}\otimes e_{i_2}\otimes\cdots\otimes e_{i_k}& \text{if}\,\, i\geq i_1 \\
0 & \text{otherwise}, \\
\end{array}
\right.
\end{equation*}
The corresponding annihilator $A_i$ is nothing but the adjoint of $A^\dagger_i$, that is
$A_i \Om=0$ and
\begin{equation*}
A_i\,e_{i_1}\otimes e_{i_2}\otimes\cdots\otimes e_{i_k}:=\delta_{i, i_1} e_{i_2}\otimes\cdots\otimes e_{i_k}\,
\end{equation*}
where $\delta_{i, j}$ is the Kronecker symbol.\\
By their very definition, creators and annihilators are partial isometries. Furthermore, the ranges
of the creators are mutually orthogonal, that is
\begin{equation*}
\label{comrul}
\begin{array}{ll}
A_i A^\dagger_j=0 & \text{if}\,\, i\neq j\,. \\
 \end{array}
\end{equation*}
As a consequence of the structure of the weakly monotone Fock space, we also have
\begin{equation*}
\label{comrul}
\begin{array}{ll}
  A^\dagger_iA^\dagger_j=A_jA_i=0 & \text{if}\,\, i< j\, . \\

\end{array}
\end{equation*}
In addition, the initial and final projections of our partial isometries satisfy the relation
\begin{equation*}
\label{comrul2}
A_iA^\dag_i=I-\sum_{k> i} A^\dag_k A_k\,\,\,\,\textrm{for every}\, i\in\bz\,,
\end{equation*}
where the convergence of the series is understood in the strong operator topology, and $I$ is the identity operator on the weakly monotone Fock space.\\

As well as satisfying the relations
above, the weakly monotone operators also satisfy a couple of equalities which are very much like
Cuntz-Krieger relations and which will play a role when one wants  to characterize  the concrete
$C^*$-algebra generated by these operators as a universal algebra defined in terms of generators
and relations. With this aim in mind, let us introduce a suitable infinite matrix $A$ whose entries are all
zeros or ones. This is given by $A=(a_{i, j})_ {i,j\in\bz}$ with $a_{i, j}=1$ if $i\geq j$ and $a_{i, j}=0$ otherwise.
The extra relations we alluded to are
\begin{align*}
&A_iA_i^\dag A^\dag_ j= a_{i, j}A_j^\dag\,, \quad \textrm{for all}\,\,i,j\in\bz\\
&A_iA^\dag_i+ A^\dag_{i+1} A_{i+1} = A_{i+1} A^\dag_{ i+1}\,,\quad \textrm{for all}\,\,i\in\bz
\end{align*}
In particular, we also have the inequality 
\begin{equation}\label{ineq}
 A_{i} A^\dag_{ i}\leq A_kA^\dag_k\,,\,\,\textrm {for all}\,\, i\leq k\,.
\end{equation}

\medskip

We denote by $\mathcal{W}_0$ the concrete $*$-algebra generated by the countable
set $\{A_i :i\in\bz\}$ of the annihilators acting on the weakly monotone Fock space
$\mathcal{F}_{wm}(\ch)$. As happens with the monotone $*$-algebra, see \cite{CFG}, it is also possible to exhibit a Hamel
basis for $\mathcal{W}_0$  quite explicitly. To this end, we denote by $\Gamma$ the
(countable) set of all words in the generators of the type
$$(A^\dag_{i_1})^{k_1}\cdots (A^\dag_{i_m})^{k_m}A_{j_1}^{l_1}\cdots A_{j_n}^{l_n}\,,$$
with $i_1 > \cdots > i_m$ and $j_1 < \cdots < j_n$, as $k_1, \ldots, k_m$ and $l_1, \ldots l_n$ vary
in $\bn$, where at least one between $n$ and $m$ is not zero.
The subset $\{A^\dag_iA_i : i\in\bz\}\subset\Gamma$ is clearly in bijection with $\bz$. We
denote by $\Lambda$ its complement in $\Gamma$.
\begin{prop}\label{Hamel}
The words in $\Lambda\bigcup\{A_i A^\dag_i:i\in\bz\}$ make up a Hamel basis for $\mathcal{W}_0$.
\end{prop}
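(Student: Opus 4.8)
The plan is to verify the two defining properties of a Hamel basis separately: that the listed words span $\mathcal{W}_0$ linearly, and that they are linearly independent as operators on $\mathcal{F}_{wm}(\ch)$. Throughout it is convenient to record the action of a normal word on the canonical basis. For a finite non-increasing sequence $s=(s_1\ge\cdots\ge s_r)$ write $e_s=e_{s_1}\otimes\cdots\otimes e_{s_r}$; then a word $w=(A^\dagger_{i_1})^{k_1}\cdots (A^\dagger_{i_m})^{k_m}A_{j_1}^{l_1}\cdots A_{j_n}^{l_n}\in\Gamma$ sends $e_J\otimes e_t\mapsto e_I\otimes e_t$ for every admissible tail $t$ and annihilates every basis vector not beginning with the block $e_J$, where $e_I=e_{i_1}^{\otimes k_1}\otimes\cdots\otimes e_{i_m}^{\otimes k_m}$ and $e_J=e_{j_n}^{\otimes l_n}\otimes\cdots\otimes e_{j_1}^{\otimes l_1}$. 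In particular all matrix coefficients between basis vectors lie in $\{0,1\}$, and distinct words of $\Gamma$ produce distinct pairs of prefixes, hence distinct operators.

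For spanning, I would show that the linear span $V$ of $\Lambda\cup\{A_iA^\dagger_i:i\in\bz\}$ contains every generator and is stable under left multiplication by each $A_p$ and $A^\dagger_p$; since $\mathcal{W}_0$ is spanned by the nonempty words in the generators, this forces $V=\mathcal{W}_0$. Reducing $A^\dagger_p\,b$ and $A_p\,b$ to normal form, for $b$ either a normal word or an $A_iA^\dagger_i$, is a short case analysis driven by $A_iA^\dagger_j=0\ (i\neq j)$, by $A^\dagger_iA^\dagger_j=A_jA_i=0\ (i<j)$, by the partial-isometry identity $A_iA^\dagger_iA_i=A_i$, and by the Cuntz--Krieger-type relation $A_iA^\dagger_iA^\dagger_j=a_{i,j}A^\dagger_j$. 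Whenever the normal form produced happens to be an excluded diagonal word $A^\dagger_iA_i$, I rewrite it inside $V$ via $A^\dagger_iA_i=A_iA^\dagger_i-A_{i-1}A^\dagger_{i-1}$, which is precisely the second Cuntz--Krieger-type relation; this is the algebraic reason the diagonal words $A^\dagger_iA_i$ may be traded for the range projections $A_iA^\dagger_i$, and conversely why no finite combination of the former reproduces the latter.

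The one configuration not disposed of by the finite relations is $A_iA^\dagger_i$ sitting immediately to the left of a nonempty string of annihilators, that is a term $A_iA^\dagger_i\,A_{j_1}^{l_1}\cdots A_{j_n}^{l_n}$. Here I invoke the operator identity $A_iA^\dagger_i=I-\sum_{k>i}A^\dagger_kA_k$: multiplying on the right by the fixed bounded operator $D_J=A_{j_1}^{l_1}\cdots A_{j_n}^{l_n}$ and using $A_kD_J=0$ for $k>j_1$ (because $A_kA_{j_1}=0$ when $k>j_1$), the infinite series truncates to the finite range $i<k\le j_1$ and yields a finite combination of normal words. I expect this truncation of the only genuinely infinite relation to be the main obstacle of the spanning part: it is the step at which one must check that, although $I\notin\mathcal{W}_0$, no stray identity term survives and the surviving sum is finite.

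For linear independence, suppose a finite combination $T=\sum_w c_w\,w+\sum_i d_i\,A_iA^\dagger_i$ vanishes, with the words $w$ ranging over $\Lambda$. I would extract the coefficients in three stages. First, for an off-diagonal word ($I\neq J$) one pairs $T$ against $e_I$ and $e_J$: the only words contributing to $\langle e_I,T\,e_J\rangle$ are those obtained from $(I,J)$ by deleting a common suffix $t$ from $e_I$ and $e_J$, all of which are again off-diagonal and, for $t\neq\varnothing$, strictly shorter, while neither a diagonal word nor any $A_iA^\dagger_i$ contributes; an induction on word length then gives $c_w=0$ for all off-diagonal $w$. Second, testing the surviving identity on the one-particle vectors $e_a$ gives $\langle e_a,T\,e_a\rangle=\sum_{i\ge a}d_i=0$ for every $a\in\bz$ (no diagonal word of length $\ge 4$ acts on a one-particle vector), whence $d_a=\sum_{i\ge a}d_i-\sum_{i\ge a+1}d_i=0$. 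Third, with only the multi-particle diagonal projections $Q_s$ onto vectors beginning with $e_s$ ($|s|\ge 2$) left, the coefficient $\langle e_s,T\,e_s\rangle$ equals the sum of the $c_{s'}$ over prefixes $s'$ of $s$ with $|s'|\ge 2$, and a second induction on length forces $c_s=0$. Combining the three stages shows all coefficients vanish, which completes the proof.
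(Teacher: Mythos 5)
Your proof is correct. The paper does not actually supply an argument for this proposition --- it defers to \cite{C} and the analogous monotone-case computation --- but your route (normal ordering for spanning, matrix coefficients against the canonical basis in three stages for independence) is exactly the intended one, and you correctly isolate the only point specific to the weakly monotone setting over $\bz$: the infinite relation $A_iA_i^\dag=I-\sum_{k>i}A_k^\dag A_k$ truncates to a finite sum, with no surviving identity term, once it is multiplied on the right by a nonempty annihilator string (the symmetric configuration $A_p^\dag\,A_iA_i^\dag$, which you do not list explicitly, is handled identically by taking adjoints, since your proposed basis is $*$-closed).
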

The proof of the above result can be done in much the same way as the monotone case and is found
 in \cite{C}.\\
The norm closure of $\mathcal{W}_0$ in $\cb(\mathcal{F}_{wm}(\ch))$
will be
denoted by $\mathcal{W}$, which will be referred to as the concrete weakly monotone $C^*$-algebra. Arguing as in \cite[Proposition 5.8]{CFL}, one sees that the identity operator $I$ on $\mathcal{F}_{wm}(\ch)$ does not belong to $\mathcal{W}$. We then denote by $\widetilde{\mathcal{W}}$ the \emph{unital} $C^*$-subalgebra of $\cb(\mathcal{F}_{wm}(\ch))$
generated by $\{A_i :i\in\bz\}$,  in other words we have $\widetilde{\mathcal{W}}=
\{W+\lambda I: W\in\mathcal{W}, \lambda\in\bc\}$. We now move on to address both $\mathcal{W}$ and $\widetilde{\mathcal{W}}$.
We point out here that $\mathcal{W}$ is an irreducible $C^*$-subalgebra of $\cb(\mathcal{F}_{wm}(\ch)$. This can be seen easily, but it also
a straightforward consequence of Proposition \ref{selfad}, where we show that even the subalgebra generated by the set
$\{A_i+A^\dag_i: i\in\bz \}$ is irreducible.\\
We start by showing that the monotone $C^*$-algebra (see {\it e.g.} \cite{CFL, CDR}), in which the squares of all creators
are zero, cannot be obtained
as a quotient of the weakly monotone $C^*$-algebra, for the ideal generated by all powers
$A_i^2$ is  in fact the whole algebra.

\begin{prop}
The closed two-sided ideal $I\subset \mathcal{W}$ generated by the
set $\{A_i^2 : i\in\bz \}$ coincides with the whole $C^*$-algebra $\mathcal{W}$.
\end{prop}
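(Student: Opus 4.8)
The plan is to show that every generator $A_i$ already belongs to the ideal $I$; once this is done, $I=\mathcal{W}$ follows immediately. The first observation I would make is that, being a closed two-sided ideal of a $C^*$-algebra, $I$ is automatically self-adjoint. Consequently, along with each $A_i^2$ it contains the adjoint $(A_i^2)^*=(A_i^\dag)^2$ for every $i\in\bz$.

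Next I would exploit the Cuntz--Krieger-type relation $A_iA_i^\dag A_j^\dag=a_{i,j}A_j^\dag$ in the diagonal case $j=i$. Since $i\geq i$ forces $a_{i,i}=1$, this relation reads
\[
A_i(A_i^\dag)^2=A_i^\dag .
\]
Because $(A_i^\dag)^2\in I$ and $I$ is a (left) ideal, the element on the left-hand side lies in $I$, whence $A_i^\dag\in I$; applying the self-adjointness of $I$ once more gives $A_i\in I$. In effect, the diagonal identity manufactures the first power of a creator out of its square, which is exactly what is needed to pull the generators into the ideal spanned by the squares.

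Finally, since $I$ is a norm-closed $*$-subalgebra of $\mathcal{W}$ that now contains all the generators $\{A_i:i\in\bz\}$, it contains the $*$-algebra $\mathcal{W}_0$ they generate, and hence its norm closure $\mathcal{W}$. As $I\subseteq\mathcal{W}$ holds trivially, we conclude $I=\mathcal{W}$. There is no genuine obstacle beyond singling out the correct relation: the entire argument collapses to the diagonal identity $A_i(A_i^\dag)^2=A_i^\dag$. The only points worth double-checking are that this relation really does hold for the concrete operators (it is immediate from the action on basis vectors, since $A_i$ strips one $e_i$ off a string that begins with two copies of $e_i$) and that the coefficient $a_{i,i}$ equals $1$, so that no annihilating factor spoils the identity.
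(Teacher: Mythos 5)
Your proof is correct, and it takes a genuinely different route from the paper's. The paper passes to the quotient $\mathcal{W}/I$ and uses the relation $A_iA_i^\dag+A_{i+1}^\dag A_{i+1}=A_{i+1}A_{i+1}^\dag$ to set up a chain over consecutive indices, concluding that every $\rho(A_{i+1})$ vanishes and hence that the quotient is trivial. You instead work directly inside the ideal, combining the automatic self-adjointness of closed two-sided ideals in a $C^*$-algebra with the diagonal instance $A_i(A_i^\dag)^2=A_i^\dag$ of the Cuntz--Krieger-type relation stated in the paper; this is shorter and proves the sharper, localized fact that each single generator $A_i$ already lies in the closed $*$-ideal generated by $A_i^2$ alone, with no interaction between different indices needed. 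One small caveat: your argument leans on self-adjointness of $I$, which is free for closed ideals but not for the purely algebraic two-sided (non-$*$) ideal of $\mathcal{W}_0$ treated in the paper's subsequent Remark, so the paper's chain argument still carries some independent content there; for the Proposition as stated, however, your proof is complete and arguably cleaner.
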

\begin{proof}
The statement amounts to proving that the quotient $\mathcal{W}/I$ is trivial.
Denote by $\r:\mathcal{W}\rightarrow \mathcal{W}/I$ the canonical projection onto the
quotient. Multiplying the equality
$$\r(A^\dag_{i+1})\r(A_{i+1}) = -\r(A_{i})\r(A^\dag_{i})+\r(A_{i+1})\r(A^\dag_{i+1})$$
by $\r(A_{i+1})\r(A^\dag_{i+1})$ on the right and taking into account \eqref{ineq}, we get
$\r(A_i)\r(A^\dag_i) = \r(A_{i+1})\r(A^\dag_{1+1})$ for
every $i\in \bz$.\\
This implies $\r(A^\dag_{i+1})\r(A_{i+1}) = 0$, hence $\r(A_{i+1})$ = 0 for every
$i\in\bz$, thus $\mathcal{W}/I=0$.
\end{proof}
\begin{rem}
The property stated above continues to hold at the $*$- algebra level: the
 two-sided ideal $I\subset \mathcal{W}_0$ generated by the set $\{A_i^2 : i\in\bz\}$
is again equal to $\mathcal{W}_0$. Indeed, one can argue as in the proof above
to get  $\r(A^\dag_{i+1})\r(A_{i+1}) = 0$ (here $\r$ is the canonical projection
of $\mathcal{W}_0$ onto $\mathcal{W}_0/I$. But then the equality
$\r(A_i)\r(A^\dag_ j)\r(A_j) = \delta_{i, j}\r(A_j)$
leads directly to $\r(A_i) = 0$ for every $i\in\bz$.
\end{rem}

\subsection{Space-free characterization of the weakly monotone $C^*$-algebra}
We are now ready to give $\mathcal{W}$ (and $\widetilde{\mathcal{W}}$)  a space-free
characterization as an abstract $C^*$-algebra in terms of the
$C^*$-algebras $O_A$ (or $\widetilde{O_A}$) associated with a possibly infinite matrix $A$. This class
of $C^*$-algebras  was introduced and studied by
Exel and Laca in \cite{EL0} as a wide generalization of the Cuntz-Krieger algebras.\\
For convenience, let us rather briefly recall how these $C^*$-algebras are defined.
Let $A=(a_{i, j})_{i, j\in I}$ be a matrix whose entries are indexed by a possibly infinite set
$I$. Suppose that  the entries of $A$
are all zeros and ones and that $A$ has no identically
zero rows. Let $X, Y$ be finite subsets of $I$. For every $j\in I$, define
$$a(X, Y, j)=\prod_{x\in X} a_{x, j}\prod_{y\in Y} (1- a_{y, j})\, .$$
Working under the additional condition that for any finite subsets $X, Y\subset I$ there are only finitely many
$j$’s such that $a(X, Y, j)=1$,  it is possible to define
a $C^*$-algebra $O_A$ as  the universal $C^*$-algebra
generated by a family of partial isometries $\{s_i: i\in I\}$, with initial projections $q_i:=s_i^*s_i$ and final projections $p_i=s_is_i^*$,  satisfying the following properties: 
\begin{enumerate}
\item $q_i$ and $q_j$ commute for all $i, j\in I$,
\item $p_i\perp p_j$, for all $i, j\in I$ with $i\neq j$
\item $q_i p_j = a_{i, j}p_j$, for all $i, j\in I$
\item $\prod_{x\in X} s_x^*s_x\prod_{y\in Y}(1-s_y^*s_y)=\sum_{j\in\bz} a(X, Y, j)s_js_j^*$\label{definingrel}\,.
\end{enumerate}
Notice that the above relations are exactly what in \cite{EL0} are referred to as $\textrm{TCK}_1)-\textrm{TCK}_3)$ and (1.3).
Furthermore,  condition (2) is equivalent to $s_i^*s_j=0$ if $i\neq j$ and
condition (3) is equivalent to $q_i s_j = a_{i, j}s_j$, for all $i, j\in I$.\\
Before continuing our analysis, it is worth pointing out that $O_A$ does not have a unit, as follows from
\cite[Proposition 8.5]{EL0}; we will denote the corresponding unitalized algebra by $\widetilde{O_A}$
(with unit $1$).\\
Henceforth, we will work with matrices indexed by $\bz$, the set  of integer numbers.
More precisely, we consider  $A=(a_{i, j})_{i, j\in\bz}$ with entries $a_{i,j}$  given by
$a_{i, j} = 1$ if $i\geq j$ and
$a_{i,j} = 0$ otherwise. Our next goal is to show that this particular
choice of the matrix leads to the sought abstract
characterization of the weakly monotone $C^*$-algebra.
\begin{thm}\label{charZ}
The map $O_A\ni s_i\mapsto A_i^\dag\in\mathcal{W}$, $i\in\bz$, extends to a
faithful representation of $O_A$. In particular, $\mathcal{W}\cong O_A$.
\end{thm}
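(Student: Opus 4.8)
The plan is to invoke the universal property of $O_A$ to produce a $*$-homomorphism $\pi\colon O_A\to\mathcal{W}$ with $s_i\mapsto A_i^\dag$, to observe that surjectivity is automatic, and to extract faithfulness from the ideal-theoretic property recorded just before the statement. Since the $A_i^\dag$ act on $\mathcal{F}_{wm}(\ch)$, any such $\pi$ is automatically a representation; and as $\mathcal{W}$ is generated by $\{A_i=(A_i^\dag)^*:i\in\bz\}$, the image $\pi(O_A)$ is a $C^*$-subalgebra containing every $A_i^\dag$, hence every $A_i$, hence all of $\mathcal{W}$. So the entire content is (a) the existence of $\pi$ and (b) its injectivity.

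For (a), by universality it suffices to check that the creators satisfy relations (1)--(4) defining $O_A$, under the identifications $q_i\mapsto A_iA_i^\dag$ and $p_i\mapsto A_i^\dag A_i$ (the final and initial projections of $A_i^\dag$). Relation (2), $p_ip_j=0$ for $i\ne j$, is immediate from $A_iA_j^\dag=0$; relation (3), $q_ip_j=a_{i,j}p_j$, follows by right-multiplying the stated identity $A_iA_i^\dag A_j^\dag=a_{i,j}A_j^\dag$ by $A_j$. The identity $A_iA_i^\dag=I-\sum_{k>i}A_k^\dag A_k$ writes each $q_i$ in terms of the projections $p_k=A_k^\dag A_k$, which are mutually orthogonal by relation (2) and hence commute, so the $q_i$ commute as well, giving relation (1).

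The one genuinely delicate point is relation (4), which is imposed on those finite pairs $X,Y\subset\bz$ for which $j\mapsto a(X,Y,j)$ is finitely supported (for our matrix, the pairs with $X,Y\ne\emptyset$). Here I would first record that $\sum_k p_k=I-P_{\Om}$, with $P_{\Om}$ the projection onto the vacuum, so that $q_i=P_{\Om}+\sum_{k\le i}p_k$ and $I-q_y=\sum_{k>y}p_k$. Since $P_{\Om}$ and the $p_k$ are pairwise orthogonal, the $q_x$ are nested projections and the products collapse to their extreme terms:
\begin{equation*}
\prod_{x\in X}q_x\prod_{y\in Y}(I-q_y)=q_{\min X}\bigl(I-q_{\max Y}\bigr)=\sum_{\max Y<k\le\min X}p_k.
\end{equation*}
On the other side, $a(X,Y,j)=1$ precisely when $\max Y<j\le\min X$, so $\sum_j a(X,Y,j)\,p_j$ is the same finite sum; matching the two completes the verification and re-confirms the finite-support bookkeeping.

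Finally (b): $\ker\pi$ is a closed two-sided ideal of $O_A$, so if it were nonzero it would, by the property recalled before the statement, contain some generator $s_i$, contradicting $\pi(s_i)=A_i^\dag\ne0$. Hence $\ker\pi=\{0\}$, so $\pi$ is a faithful representation, and combined with surjectivity this yields $\mathcal{W}\cong O_A$. I expect relation (4) to be the main obstacle, and in particular the careful tracking of the vacuum projection $P_{\Om}$, which is exactly what prevents $\sum_k p_k$ from equalling $I$; everything else reduces to the relations already established for the creators and to the cited fact that every nonzero ideal of $O_A$ meets the generators.
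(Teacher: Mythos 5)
Your proposal is correct and follows essentially the same route as the paper: universality plus verification of relations (1)--(4) for the creators, with relation (4) reducing to the identification of $\prod_{x\in X}q_x\prod_{y\in Y}(1-q_y)$ with $q_{\min X}(1-q_{\max Y})=\sum_{\max Y<j\le\min X}p_j$, and faithfulness extracted from the fact that every nonzero ideal of $O_A$ contains a generator. Your bookkeeping via $\sum_k p_k=I-P_\Om$ is a harmless variant of the paper's telescoping identity $\sum_{N<j\le M}A_j^\dag A_j=A_MA_M^\dag-A_NA_N^\dag$, and your explicit remark that relation (4) is only imposed for pairs with $X,Y\neq\emptyset$ is in fact slightly more careful than the paper's phrasing.
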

\begin{proof}
In order to see that the map in the statement extends to a $*$-representation
of $O_A$, by universality of   $O_A$ it is enough   to ascertain that
the operators $A_i^\dagger$ satisfy the relations (1)--(4) (see also Theorem 8.6 in \cite{EL0}).
Now the first three relations are entirely obvious. Relation (4)
does need to be checked. To this end, let $X, Y$ be finite subsets of $\bz$.
We have to show that
there are only finitely many $j$’s such that $a(X, Y, j)$ is different from $0$
and that the equality
\begin{equation*}
\prod_{x\in X} A_xA^\dag_x\prod_{y\in Y}(1-A_yA^\dag_y)=\sum_{j\in\bz} a(X, Y, j)A_j^\dag A_j
\end{equation*}
holds.
We first observe that the left-hand side of the equality above is
$A_MA^\dag_M (1-A_NA^\dag_N)$, where
$M := \min X$ and $N := \max Y$. Note that
$a(X, Y, j)$ is different from $0$ if and only if $N <j \leq M$. There are two
cases to deal with, depending on whether $M > N$ or not.
If $M > N$, then the set of those
$j$ such that $a(X, Y, j)$ is different
from $0$ is non-empty, and the equality is satisfied as
$\sum_{N< j\leq M} A_j^\dag A_j=A_MA^\dag_M-A_NA^\dag_N$.
If $M \leq N$, then the set of the $j$’s such that $a(X, Y, j)$
is different from $0$ is empty and both sides of the equality are zero.\\
To conclude, we need to  prove that $\pi$ is a faithful
representation. To this end, we recall that it is possible to associate a graph with any matrix with either $0$ or
$1$ as its entries, see \cite{EL0}. The graph associated with our matrix $A$ is easily seen not to
have terminal circuits, see \cite{EL0} for the terminology.
An application of \cite[Therem 13.1]{EL0}  yields the following property enjoyed by  ideals of
$O_A$: any non-trivial two-sided ideal of $O_A$ must contain at least
one of the generators $s_i$. We are now ready to show the faithfulness of our representation.
We will argue by contradiction.
Let $\mathcal{I}\subset O_A$ be the
kernel of $\pi$. If $\mathcal{I}$ were not zero, then there should exist $i_0\in\bz$ such that
$s_{i_0}\in \mathcal{I}$. But then we  would have $A_{i_0}^\dag=\pi(s_ {i_0})=0$, which of course is not true.
\end{proof}

Our next aim is to show that  $O_A$ is not a type-I $C^*$-algebra. To accomplish this goal, we first show that
the  projection $P_\Om$ onto the vacuum does not sit in $O_A$ .

\begin{prop}\label{vacumnotalg}
The orthogonal projection $P_\Om$ onto the vacuum does not belong to $\mathcal{W}$.
\end{prop}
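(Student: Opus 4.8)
The plan is to argue by contradiction, playing off the fact that $P_\Om$ is a rank-one (hence compact) operator against the ideal-theoretic property of $O_A$ recorded just before Theorem \ref{charZ}. So suppose, toward a contradiction, that $P_\Om\in\mathcal{W}$, and let $\cj\subseteq\mathcal{W}$ be the closed two-sided ideal generated by $P_\Om$. Since $P_\Om\neq 0$, the ideal $\cj$ is non-trivial. The first thing I would establish is that every element of $\cj$ is a compact operator. Indeed, the algebraic two-sided ideal generated by $P_\Om$ is the linear span of the operators $a\,P_\Om\,b$ with $a,b\in\widetilde{\mathcal{W}}$ (these do lie in $\mathcal{W}$, as $\mathcal{W}$ is an ideal of $\widetilde{\mathcal{W}}$ and $P_\Om\in\mathcal{W}$), and each such operator has rank at most one because $P_\Om$ does. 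Hence the algebraic ideal consists of finite-rank operators, and passing to the norm closure yields $\cj\subseteq\ck(\mathcal{F}_{wm}(\ch))$.

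Next I would pass to the abstract picture. By Theorem \ref{charZ} we may identify $\mathcal{W}$ with $O_A$ in such a way that the generator $s_i$ corresponds to $A^\dag_i$. A $*$-isomorphism carries closed ideals to closed ideals bijectively, so $\cj$ corresponds to a non-trivial closed two-sided ideal of $O_A$; by the consequence of Theorem 13.1 of \cite{EL0} recalled above, this ideal must contain at least one generator. Transporting back, $\cj$ contains some $A^\dag_{i_0}$. This is the desired contradiction: $A^\dag_{i_0}$ is a partial isometry whose initial space—spanned by $\Om$ together with all simple tensors $e_{i_1}\otimes\cdots\otimes e_{i_k}$ whose leftmost leg satisfies $i_1\leq i_0$—is infinite-dimensional, so $A^\dag_{i_0}$ has infinite rank and is therefore not compact. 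Since $\cj\subseteq\ck(\mathcal{F}_{wm}(\ch))$, no element of $\cj$ can be non-compact, which is absurd. We conclude that $P_\Om\notin\mathcal{W}$.

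The genuinely substantive step is the confrontation between the two pieces of information about $\cj$: that it is forced into the compacts by the rank-one generator, and that it is forced to swallow a non-compact generator by the Exel--Laca dichotomy. Everything else is immediate from the definitions—that $a\,P_\Om\,b$ has rank at most one, and that the initial projection $A^\dag_{i_0}A_{i_0}$ of $A^\dag_{i_0}$ has infinite-dimensional range. One subtlety I would double-check is that the ideal property invoked really applies to $\cj$ as stated, i.e.\ that it is phrased for arbitrary non-trivial closed ideals and not merely for gauge-invariant ones; granting the formulation given in the excerpt, the argument goes through as above, and it is robust in the sense that it uses the specific matrix $A$ only through that single ideal-theoretic consequence.
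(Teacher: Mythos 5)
Your proof is correct, but it takes a genuinely different route from the paper's. The paper argues by hand: it approximates $P_\Om$ by elements of the dense $*$-algebra $\mathcal{W}_0$, normalizes them via the Hamel basis of Proposition \ref{Hamel} into the form $T_n+\sum_{k\in F_n}\a_k^{(n)}A_kA_k^\dag$, and evaluates the approximation inequality on two test vectors --- the vacuum $\Om$ and a basis vector $e_s$ with $s$ below every index occurring in the approximant --- to extract the contradictory bounds $|1-\sum_k\a_k^{(2)}|\leq\tfrac14$ and $|\sum_k\a_k^{(2)}|\leq\tfrac14$. Your argument instead pits the compactness of the ideal generated by the rank-one projection $P_\Om$ against the Exel--Laca dichotomy (every nonzero closed two-sided ideal of $O_A$ contains a generator, hence the non-compact partial isometry $A^\dag_{i_0}$), transported through the isomorphism of Theorem \ref{charZ}; since both that theorem and the ideal property are established before Proposition \ref{vacumnotalg}, there is no circularity, and each step checks out (in particular, $\overline{\mathrm{span}}\{aP_\Om b\}$ is indeed the closed ideal generated by $P_\Om$ and consists of compacts, and a partial isometry with infinite-dimensional initial space is never compact). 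What your approach buys is brevity and a conceptual explanation --- the vacuum projection cannot belong because it would generate a compact ideal, which no nonzero ideal of $O_A$ can be; what it costs is reliance on the Exel--Laca machinery, whereas the paper's elementary estimate is self-contained and, more importantly, transfers verbatim to the anti-monotone algebra of Subsection \ref{anti} (the paper explicitly reuses it there), where your route would require re-verifying the hypotheses of \cite[Theorem 13.1]{EL0} for a different matrix. Two small points to tidy: the initial projection of $A^\dag_{i_0}$ is $A_{i_0}A^\dag_{i_0}$, not $A^\dag_{i_0}A_{i_0}$ (your verbal description of the initial space is the correct one, and both projections have infinite rank, so nothing breaks); and if ``non-trivial'' in the quoted ideal property is read as ``nonzero and proper,'' you should add the one-line observation that $\cj=\mathcal{W}$ is equally absurd, since $\mathcal{W}$ contains the non-compact $A^\dag_{i_0}$ while $\cj$ sits inside the compacts.
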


\begin{proof}
We shall argue by contradiction. Suppose on the contrary that $P_\Om$
does belong to $\mathcal{W}$. Then there exists a sequence $\{X_n: n\in\bn\}$, with
each $X_n$ sitting in the $*$-algebra $\mathcal{W}_0$ generated by $\{A_i : i\in\bz\}$ such that
$\|P_\Om - X_n\|\leq \frac{1}{2^n}$ for all $n$ in $\bn$.
There is no loss of generality if we assume that the $X_n$'s
are of the form $T_n +\sum_{k\in F_n}\a_k^{(n)} A_k A_k^\dag $, where, for each $n$,
$T_n$ is a finite linear combination of words in $\Lambda$ (as in Proposition \ref{Hamel}),
$F_n\subset \bz$ is a finite set, and $\alpha_k^{(n)}$ are complex coefficients for all $k$ in $F_n$.
Now  the inequality
\begin{equation}\label{norm}
\bigg\|  P_\Om - \big(T_n +\sum_{k\in F_n}\a_k^{(n)} A_k A_k^\dag \big) \bigg\|\leq\frac{1}{2^n}
\end{equation}
computed on the vacum vector $\Om$ gives
$ \big\|\Om - (T_n\Om +\sum_{k\in F_n}\a_k^{(n)} \Om)\big \|\leq\frac{1}{2^n}$. Because
$T_n\Om$ is either $0$ or a vector orthogonal to $\Om$, we find
$\big \|(1-\sum_{k\in F_n}\a_k^{(n)}) \Om\big \|\leq\frac{1}{2^n}$, that is
$\big|1-\sum_{k\in F_n}\a_k^{(n)}\big|\leq\frac{1}{2^n}$ for all $n$.\\
 In particular, we can fix $n=2$.
Computing Inequality \eqref{norm} on a vector $e_s$, where $s$ in $\bz$ is any integer smaller than the minimum
of the set of all indices of creators/annihilators showing up in $X_2$, we now find
$\big\|  T_2e_s -\sum_{k\in F_2} \a_k^{(2)} e_s\big \|\leq\frac{1}{4}$. As above, $T_2 e_s$ is orthogonal to $e_s$, hence the inequality implies that in particular we must also have $\big| \sum_{k\in F_2} \a_k^{(2)}\big|\leq\frac{1}{4}$, and an absurd has been got to since we also had
$\big|1-\sum_{k\in F_n}\a_k^{(2)}\big |\leq\frac{1}{4}$.
\end{proof}

\begin{cor}\label{nottypeI}
$O_A$ is not a $C^*$-algebra of type I.
\end{cor}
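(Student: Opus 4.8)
The plan is to invoke the standard characterization of type-$I$ $C^*$-algebras and then play it off against the two facts already established, namely the irreducibility of $\mathcal{W}$ and Proposition \ref{vacumnotalg}. Recall that a separable $C^*$-algebra $B$ is of type $I$ (equivalently GCR, or postliminal) if and only if, for every irreducible representation $\pi$ on a Hilbert space $H_\pi$, the image $\pi(B)$ contains the ideal $\mathcal{K}(H_\pi)$ of compact operators (see {\it e.g.} Dixmier's book on $C^*$-algebras). Since $O_A\cong\mathcal{W}$ is generated by the countable family $\{A_i:i\in\bz\}$, it is separable, so this criterion is available.

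First I would record that the defining representation $\pi\colon O_A\to\cb(\mathcal{F}_{wm}(\ch))$ determined by $\pi(s_i)=A_i^\dag$, whose image is exactly $\mathcal{W}$ by Theorem \ref{charZ}, is irreducible; this is the irreducibility of $\mathcal{W}$ on $\mathcal{F}_{wm}(\ch)$ already pointed out above (and which also follows from Proposition \ref{selfad}). Next I would argue by contradiction: suppose $O_A$ were of type $I$. Applying the criterion to the irreducible representation $\pi$ and transporting through the isomorphism of Theorem \ref{charZ}, we would obtain $\mathcal{K}(\mathcal{F}_{wm}(\ch))\subseteq\pi(O_A)=\mathcal{W}$. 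In particular the orthogonal projection $P_\Om$ onto the vacuum, being of rank one and hence compact, would have to belong to $\mathcal{W}$.

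This, however, directly contradicts Proposition \ref{vacumnotalg}, which asserts precisely that $P_\Om\notin\mathcal{W}$. Therefore $O_A$ cannot be of type $I$, which is the claim.

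There is essentially no obstacle left at this stage: all the analytic content lies in Proposition \ref{vacumnotalg}, and the corollary merely packages it. The only points requiring a word of care are citing the type-$I$ characterization in the correct form (type $I$ forces the compacts into the image of \emph{every} irreducible representation, and it is the irreducible identity representation we exploit) and noting that $P_\Om$ is compact, which is immediate since it has rank one.
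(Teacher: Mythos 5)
Your argument is correct and is essentially the paper's own proof: both apply the Glimm/Sakai characterization of separable type-$I$ $C^*$-algebras to the irreducible Fock representation and derive a contradiction from Proposition \ref{vacumnotalg}, since $P_\Om$ is a rank-one (hence compact) operator not lying in $\mathcal{W}$. Your write-up merely makes explicit the separability check and the compactness of $P_\Om$, which the paper leaves implicit.
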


\begin{proof}
The Fock representation of $O_A$ is irreducible, but nevertheless  its range fails to contain
all compact operators because $P_\Om$ does not sit in $\mathcal{W}$. Thus the statement follows from a well-known characterization of separable type-I $C^*$-algebras \cite{G}.
\end{proof}

\begin{rem}
The unitalization $\widetilde{O_A}$  of $O_A$ continues not to be a type-I
$C^*$-algebra.
\end{rem}

\subsection{Ergodic properties of the shift}\label{erg}
Let us denote by $\tau$ the (unital) $*$-automorphism of $O_A$ ( or $\widetilde{O_A}$) uniquely determined by
$\tau(s_i)=s_{i+1}$ for all $i\in\bz$.
As we are interested in studying the ergodic properties of the shift, it might be worth recalling
the notions from ergodic theory that are relevant in our analysis. By a
$C^*$-dynamical system we mean a pair $(\ga, \Phi)$, where
$\ga$ is a (unital) $C^*$-algebra and $\Phi$ a (unital) $*$-homomorphism.
A state $\om$ on $\ga$ is invariant under $\Phi$ if $\om\circ\Phi=\om$.
The compact convex set of all invariant states of a $C^*$-dynamical system $(\ga, \Phi)$ is denoted by
$\mathcal{S}^\Phi(\ga)$.
A $C^*$-dynamical system (on a unital $C^*$-algebra) is said to be uniquely ergodic if $\mathcal{S}^\Phi(\ga)$ is a singleton, that is if the system only has one invariant state; in this case it is easy to see that the fixed-point subalgebra
$\ga^\Phi:=\{a\in\ga: \Phi(a)=a\}$ is trivial, {\it i.e.} $\ga^\Phi=\bc$.\\
More in general, a $C^*$-dynamical system is uniquely ergodic with respect to the fixed-point algebra if
any state of $\ga^\Phi$ has exactly one $\Phi$-invariant extension to the whole $\ga$. Unique ergodicity
with respect to the fixed-point subalgebra was first introduced in \cite{AD}, where it was characterized in terms
of a number of equivalent conditions: one of those is that for every $a$ in $\ga$ the Ces\`{a}ro average
$\frac{1}{n+1}\sum_{k=0}^{n+1}\Phi^k(a)$ converges in norm to an element of
$\ga^\Phi$.

\begin{prop}\label{notuniq}
The $C^*$-dynamical system $(\widetilde{O_A}, \tau)$ is not uniquely ergodic
w.r.t. the fixed-point subalgebra.
\end{prop}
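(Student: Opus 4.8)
The plan is to invoke the characterization of unique ergodicity with respect to the fixed-point subalgebra recalled just above, namely (by \cite{AD}) that $(\widetilde{O_A},\tau)$ is uniquely ergodic w.r.t. $\widetilde{O_A}^{\,\tau}$ if and only if for every $a\in\widetilde{O_A}$ the Cesàro averages $C_n(a):=\frac{1}{n+1}\sum_{k=0}^{n}\tau^k(a)$ converge in norm to an element of $\widetilde{O_A}^{\,\tau}$. To disprove unique ergodicity it therefore suffices to exhibit a single element whose Cesàro averages fail to converge in norm at all.

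The element I would choose is the initial projection $q_0=s_0^*s_0$. Since $\tau^k(q_0)=s_k^*s_k=q_k$, the relevant averages are $C_n(q_0)=\frac{1}{n+1}\sum_{k=0}^n q_k$. The starting observation is the relation $q_k=I-\sum_{j>k}p_j$ (equivalently the telescoping identity $q_{k+1}=q_k+p_{k+1}$ noted in the remark after Theorem \ref{charZ}), which exhibits $q_k$ as the identity minus the projection onto the vectors whose leftmost particle carries an index strictly larger than $k$. The point of using $q_0$ rather than, say, the range projection $p_0$ is precisely that $q_k$ retains a nontrivial ``identity part'': by the orthogonality $p_i\perp p_j$, the averages of the $p_k$ would converge to $0$ in norm and give no obstruction, whereas those of the $q_k$ will not converge.

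To make this precise I would pass to the faithful Fock representation of Theorem \ref{charZ}, so that norms may be computed on concrete vectors, and evaluate $C_n(q_0)$ on the single-particle vectors $e_m$. Using $q_k e_m=e_m$ when $k\geq m$ and $q_k e_m=0$ otherwise, one gets $C_n(q_0)\,e_{n+1}=0$ because every index $k\in\{0,\dots,n\}$ is too small, while for $N>n$ one finds $C_N(q_0)\,e_{n+1}=\frac{N-n}{N+1}\,e_{n+1}$. Hence $\|C_N(q_0)-C_n(q_0)\|\geq\frac{N-n}{N+1}$, and choosing $N=2n+1$ yields $\|C_{2n+1}(q_0)-C_n(q_0)\|\geq\frac12$ for every $n$. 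Thus the sequence $\{C_n(q_0)\}_n$ is not Cauchy, a fortiori not norm-convergent, and the criterion is violated.

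The computation itself is routine; the only genuine choices are selecting an element whose averages retain a non-vanishing component under the shift (which the orthogonality of the range projections rules out, steering one toward the initial projections $q_i$) and locating the test vectors $e_{n+1}$ at exactly the index where the averaged projections $q_0,\dots,q_n$ all annihilate them. Beyond this bookkeeping I do not anticipate any serious obstacle.
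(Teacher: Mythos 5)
Your proof is correct and takes essentially the same approach as the paper: both test the Ces\`aro-average criterion of \cite{AD} on the initial projection $q_0=s_0^*s_0$ (that is, $A_0A_0^\dagger$ in the Fock representation) and show that the averages converge strongly but not in norm. The only cosmetic differences are that the paper shifts backwards with $\tau^{-k}$, so the averages tend strongly to $P_\Omega$, and certifies failure of norm convergence by bounding the distance to that strong limit from below, whereas you shift forwards and show the averages are not Cauchy.
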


\begin{proof}
Since $\widetilde{O_A}$ and $\widetilde{\mathcal{W}}$ are isomorphic, we may as well work at the Fock space level.
To prove the result, it is enough to exhibit an operator $T\in\widetilde{\mathcal{W}}$ such that the
Cesaro averages $\frac{1}{n}\sum_{k=0}^{n-1}\tau^{-k}(T)$ do not converge in norm.
We will show that $T=A_0A_0^\dagger$ will do.
First note that $\frac{1}{n}\sum_{k=0}^{n-1}\tau^{-k}(A_0A_0^\dagger)=\frac{1}{n}\sum_{k=0}^{n-1}A_{-k}A_{-k}^\dagger
$. Now the decreasing sequence of projections
$\{A_{-k}A_{-k}^\dagger: k\in\bn\}$ strongly converges to $P_\Om$. In particular, the sequence
$\frac{1}{n}\sum_{k=0}^{n-1}A_{-k}A_{-k}^\dagger$ also converges to $P_\Om$ strongly.
However, the convergence does not hold in norm because

$$\left\|P_\Om-\frac{1}{n}\sum_{k=0}^{n-1}A_{-k}A_{-k}^\dagger \right\|\geq
\left\|P_\Om e_{-n}-\frac{1}{n}\sum_{k=0}^{n-1}A_{-k}A_{-k}^\dagger e_{-n}\right\|=\|e_{-n}\|=1\, .$$

\end{proof}

\begin{lem}\label{estimate}
For any  word $Y\in\Lambda$, one has
$$\left\|\frac{1}{n}\sum_{k=0}^{n-1}\tau^k(Y)\right\|\leq\frac{1}{\sqrt{n}},\quad \textrm{for all}\,\, n\in\bn\, .$$
\end{lem}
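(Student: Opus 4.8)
The plan is to reduce the statement to the operator-norm bound $\left\|\sum_{k=0}^{n-1}\tau^k(Y)\right\|\le\sqrt n$, which upon division by $n$ yields the claimed factor $1/\sqrt n$. Writing $T_k:=\tau^k(Y)$ and $S_n:=\sum_{k=0}^{n-1}T_k$, I would exploit the following elementary principle: if the closures of the ranges of the $T_k$ are pairwise orthogonal, then $T_k^*T_l=0$ whenever $k\ne l$, since $\langle T_k^*T_l\, u,v\rangle=\langle T_l u,T_k v\rangle=0$ for all $u,v$. Consequently $S_n^*S_n=\sum_{k}T_k^*T_k$ is a sum of $n$ positive operators each of norm at most $1$ (a product of partial isometries has norm $\le 1$, and $\tau$ is isometric, so $\|T_k\|=\|Y\|\le 1$), whence $\|S_n\|^2=\|S_n^*S_n\|\le n$. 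There is a mirror statement: if instead the domains of the $T_k$, that is the ranges of the $T_k^*$, are pairwise orthogonal, then $T_kT_l^*=0$ for $k\ne l$, and the same bound follows from $S_nS_n^*=\sum_k T_kT_k^*$.

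It remains to produce, for every $Y\in\Lambda$, one of these two orthogonality patterns. Recall that $\tau$ shifts every index by one, so $T_k$ is the word obtained from $Y$ by replacing each $A_i,\,A_i^\dagger$ with $A_{i+k},\,A_{i+k}^\dagger$. I would split into two cases according to whether $Y$ contains a creation operator. If it does, then by the normal form of the elements of $\Gamma$ its leftmost factor is a power $(A_{i_1}^\dagger)^{k_1}$ with $k_1\ge1$; since a single $A_a^\dagger$ sends any vector either to $0$ or to a vector whose first particle is $e_a$, the range of $T_k$ is contained in the subspace $V_{i_1+k}$ spanned by the basis tensors beginning with $e_{i_1+k}$. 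As the subspaces $V_a$ $(a\in\bz)$ are pairwise orthogonal and the indices $i_1+k$ are distinct, the ranges of the $T_k$ are pairwise orthogonal and the first bound applies. If $Y$ contains no creator, then it is a non-empty product of annihilators $Y=A_{j_1}^{l_1}\cdots A_{j_n}^{l_n}$ with $n\ge1$, and its adjoint $Y^*=(A_{j_n}^\dagger)^{l_n}\cdots(A_{j_1}^\dagger)^{l_1}$ does begin with a creator; running the previous argument on $Y^*$ shows that the ranges of the $T_k^*$, i.e. the domains of the $T_k$, lie in the mutually orthogonal subspaces $V_{j_n+k}$, so the mirror bound applies.

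Assembling the two cases, in either situation $\|S_n\|\le\sqrt n$, whence $\left\|\frac1n\sum_{k=0}^{n-1}\tau^k(Y)\right\|\le 1/\sqrt n$. The one step that genuinely needs care is the geometric claim that the outermost creation operator pins down the first particle of every vector in the range, so that shifting the indices yields genuinely orthogonal ranges (respectively domains); everything else is the bookkeeping that a product of partial isometries has norm at most $1$ together with the near-trivial vanishing of the off-diagonal terms $T_k^*T_l$ (respectively $T_kT_l^*$). I expect this structural input, rather than any delicate estimate, to be the crux, and it is precisely where the weakly monotone structure of $\mathcal{F}_{wm}(\ch)$ — in particular the exclusion of the diagonal words $A_i^\dagger A_i$ from $\Lambda$ — makes itself felt.
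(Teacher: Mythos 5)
Your proof is correct and rests on the same mechanism as the paper's: the orthogonality $A_lA_j^\dag=0$ for $l\neq j$, applied to the leading creator $A_{i_1+k}^\dag$ of each shifted word, together with passage to adjoints for creator-free words; the paper runs this vector-by-vector on the particle subspaces via the estimate $\bigl\|\sum_{j}A_j^\dag\eta_j\bigr\|^2=\sum_j\|A_j^\dag\eta_j\|^2\le n\max_j\|\eta_j\|^2$, whereas you package it as the operator identity $S_n^*S_n=\sum_k T_k^*T_k$, which is marginally cleaner since it dispenses with the reduction to a fixed particle subspace. One small correction to your closing remark: the bound in fact holds (with the better constant $1/n$) for the excluded diagonal words $A_i^\dag A_i$, which are projections with mutually orthogonal shifts; the words for which the Ces\`aro averages genuinely fail to vanish in norm are $A_iA_i^\dag$ (the initial projections of the creators, cf.\ Proposition \ref{notuniq}), and these are not elements of $\Gamma$ to begin with.
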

\begin{proof}
We claim that the estimate
$$\left\|\sum_{j=1}^n A_j^\dag\eta_j\right\|^2\leq n\, \max_{1\leq j\leq n} \|\eta_j\|^2\,$$
 holds when the vectors $\eta_j$ all belong to a $k$-particle subspace, for some
fixed $k\geq 1$. The claim will be shown to hold at the end of the proof.\\
Now, let  $Y=(A^\dag_{i_1})^{k_1}\cdots (A^\dag_{i_m})^{k_m}A_{j_1}^{l_1}\cdots A_{j_n}^{l_n}$ be a word
featuring at least one creator. If
$\xi$ is a unit vector lying in a $m$-particle subspace,  define
$\xi_k:=\tau^k\big((A^\dag_{i_1})^{k_1- 1}(A^\dag_{i_2})^{k_2}\cdots (A^\dag_{i_m})^{k_m}A_{j_1}^{l_1}\cdots A_{j_n}^{l_n}\big)\xi$.
Note that $\|\xi_k\|\leq 1$ and that all vectors $\xi_k$ belong to a common $r$-particle space (with $r=m+k_1- 1+k_2+\ldots+ k_m-(l_1+\ldots +l_n)$).
But then we have
\begin{align*}
\left\|\sum_{k=0}^{n-1}\tau^k(Y)\xi\right \|= \left\|\sum_{k=0}^{n-1} A^\dag_{i_1 +k}\xi_k \right\|\leq\sqrt{n}
\end{align*}
where the last inequality is due to the fact that $\{A^\dag_{i_1 +k}\xi_k: k=0, \ldots, n-1\}$ is a set of orthogonal
vectors. The inequality we have arrived at is exactly the property in the statement as $m$ is arbitrary.\\
The case of a $Y$ with no creators at all is straightforwardly reconducted to the previous case by taking the adjoint of
$Y$. \\
All is left to do is prove the claim, which can be done as follows
\begin{align*}
&\left\|\sum_{j=1}^n A_j^\dag\eta_j\right\|^2=\left\langle \sum_{j=1}^n A_j^\dag\eta_j, \sum_{l=1}^n A_l^\dag\eta_l\right\rangle= \sum_{j, l=1}^n\left\langle  A_lA_j^\dag\eta_j, \eta_l\right\rangle=\\
& \sum_{j, l=1}^n \delta_{j, l}\left\langle  A_lA_j^\dag\eta_j, \eta_l\right\rangle= \sum_{j=1}^n
\|A_j^\dag\eta_j \|^2\leq \sum_{j=1}^n \|\eta_j \|^2\leq  n\max_{1\leq j\leq n} \|\eta_j\|^2.
\end{align*}
\end{proof}
The next result provides the description of all $\tau$-invariant states on $\widetilde{O_A}$. This set turns out to be the segment
whose endpoints are the vacuum state $\om_\Om$ (its natural extension to $\widetilde{O_A}$) on  $\widetilde{O_A}$ and the  state at infinity $\om_\infty$ (that is $\om_\infty(a+\lambda 1)=\lambda$, for all $a$ in $O_A$).

\begin{thm}\label{inv}
The set of all shift invariant states on $\widetilde{O_A}$ is given by
$$\mathcal{S}^\tau(\widetilde{O_A})=\{t\om_\Om+(1-t)\om_\infty: t\in [0,1]\}\,.$$
\end{thm}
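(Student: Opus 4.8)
The plan is to prove $\mathcal{S}^\tau(\widetilde{O_A})=\{t\om_\Om+(1-t)\om_\infty: t\in[0,1]\}$ by a double inclusion. The inclusion $\supseteq$ is the easy direction: I would first check that both $\om_\Om$ and $\om_\infty$ are $\tau$-invariant, and then observe that $\mathcal{S}^\tau(\widetilde{O_A})$ is a convex set, so the whole segment lies inside it. That $\om_\infty$ is $\tau$-invariant is immediate since $\tau$ is unital and restricts to a $*$-automorphism of $O_A$, so $\om_\infty\circ\tau(a+\lambda 1)=\om_\infty(\tau(a)+\lambda 1)=\lambda=\om_\infty(a+\lambda 1)$. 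For $\om_\Om$, the vacuum state satisfies $\om_\Om(A_i^\dag A_j)=\langle A_j\Om,A_i\Om\rangle$-type expressions, and shift-invariance $\om_\Om\circ\tau=\om_\Om$ should be verified on a spanning set of words in $\Lambda\cup\{A_iA_i^\dag\}$ (using Proposition \ref{Hamel}); here one uses that $\langle\tau^{\pm 1}(\cdots)\Om,\Om\rangle$ only depends on whether the word acts trivially on $\Om$, which is shift-invariant information.

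The real content is the reverse inclusion $\subseteq$: every $\tau$-invariant state $\om$ must be a convex combination of $\om_\Om$ and $\om_\infty$. Here I would exploit Lemma \ref{estimate}. The key structural fact is that, by Proposition \ref{Hamel}, $\widetilde{O_A}$ is spanned (densely) by $\mathbb{C}1$, the diagonal elements $\{A_iA_i^\dag, A_i^\dag A_i\}$, and the words in $\Lambda$. For any word $Y\in\Lambda$, $\tau$-invariance of $\om$ gives $\om(Y)=\om\big(\frac1n\sum_{k=0}^{n-1}\tau^k(Y)\big)$, and Lemma \ref{estimate} bounds the norm of the Cesàro average by $1/\sqrt n$, which tends to $0$; hence $\om(Y)=0$ for every $Y\in\Lambda$. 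Thus $\om$ is completely determined by its values on the diagonal projections and on $1$. Next I would analyze these diagonal values: the relation $A_iA_i^\dag+A_{i+1}^\dag A_{i+1}=A_{i+1}A_{i+1}^\dag$ together with $\tau$-invariance $\om(A_iA_i^\dag)=\om(A_{i+1}A_{i+1}^\dag)$ forces $\om(A_{i+1}^\dag A_{i+1})=0$, and hence $\om(A_i^\dag A_i)=0$ for all $i$. Writing $t:=\om(A_0A_0^\dag)=\om(A_iA_i^\dag)$ (the common value, by invariance), one sees $t\in[0,1]$ since $A_0A_0^\dag$ is a projection.

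It then remains to identify $\om$ with $t\om_\Om+(1-t)\om_\infty$ by comparing values on the spanning set. On $1$ both give $1$; on every $Y\in\Lambda$ both give $0$ (for the right-hand side, note $\om_\Om(Y)=\langle Y\Om,\Om\rangle=0$ since a word in $\Lambda$ cannot map $\Om$ to a multiple of $\Om$, and $\om_\infty(Y)=0$ since $Y\in O_A$); on the diagonal generators one checks $\om_\Om(A_iA_i^\dag)=\langle A_i^\dag\Om, A_i^\dag\Om\rangle=1$ while $\om_\infty(A_iA_i^\dag)=0$, giving $(t\om_\Om+(1-t)\om_\infty)(A_iA_i^\dag)=t=\om(A_iA_i^\dag)$, and similarly both sides vanish on $A_i^\dag A_i$. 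Since the two functionals agree on a dense spanning set and are bounded, they coincide. The main obstacle I anticipate is making the reduction rigorous: one must ensure that $\om$ is genuinely recovered from its values on the Hamel-basis elements, i.e. that knowing $\om$ vanishes on $\Lambda$ and equals $t$ on each $A_iA_i^\dag$ (and $0$ on each $A_i^\dag A_i$) pins it down on all of $\widetilde{O_A}$ by continuity; this requires care because the Hamel basis spans only the dense $*$-subalgebra $\mathcal{W}_0+\mathbb{C}1$, so one leans on continuity of $\om$ to pass to the closure.
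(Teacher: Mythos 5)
Your proof is correct and follows essentially the same route as the paper's: Lemma \ref{estimate} kills the off-diagonal words $Y\in\Lambda$, shift-invariance yields a common value $t=\om(A_jA_j^\dag)$, and the state is then identified with $t\om_\Om+(1-t)\om_\infty$ on the dense $*$-subalgebra $\mathcal{W}_0+\bc I$ and hence everywhere by continuity. The only additions are your explicit verification of the (easy) reverse inclusion, which the paper leaves implicit, and the computation $\om(A_i^\dag A_i)=0$, which is redundant since $A_i^\dag A_i=A_iA_i^\dag-A_{i-1}A_{i-1}^\dag$ is already a combination of Hamel-basis elements from Proposition \ref{Hamel}.
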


\begin{proof}

Let $\om$ be a shift-invariant state on $\widetilde{O_A}\cong \widetilde{\mathcal{W}}$.
It suffices to show that the restriction of $\om$ to the dense $*$-algebra
$\mathcal{W}_0+\bc I\subset\widetilde{\mathcal{W}}$ is a convex combination
of the vacuum state and the state at infinity.
Now any element $X$ of $\mathcal{W}_0+\bc I$ can be written as a sum
$X=\gamma I+\sum_{\lambda\in F} c_\lambda Y_\lambda +\sum_{j\in G}\beta_j A_j A_j^\dag$, where
$F$ and $G\subset\bz$ are finite sets, for each $\lambda\in F$ the element $Y_\lambda$ is a word in the set
$\Gamma$ we defined above, and the $c_\lambda$'s and the $\beta_j$'s are complex coefficients.
By Lemma  \ref{estimate} we have that, for every
$\lambda$ in $F$, the averages  $\frac{1}{n}\sum_{k=0}^{n-1}\tau^k(Y_\lambda)$ converge in norm to $0$, which means
$\om(Y_\lambda)=\om\left(\frac{1}{n}\sum_{k=0}^{n-1}\tau^k(Y_\lambda)\right)$ must be zero as well.\\
Furthermore, by invariance $\om(A_j A_j^\dag)$ cannnot depend on $j\in\bz$, so we can define
$t:=\om(A_j A_j^\dag)$. Since $0\leq A_j A_j^\dag\leq I$, we have $0\leq t \leq 1$.
From the two equalities $\om_\infty(X)=\gamma$ and $\om_\Om(X)=\gamma+ \sum_j \beta_j$, we finally find
\begin{align*}
\om(X)&=\gamma+ t\sum_j \beta_j=\om_\infty(X)+ t(\om_\Om(X)-\om_\infty(X))\\
&=t\om_\Om(X)+(1-t)\om_\infty(X)\,  .
\end{align*}
\end{proof}
Our ultimate goal is to show that the fixed-point subalgebra of the shift is trivial. This will follow easily
from a possibly known general fact, which we include below for convenience.
\begin{lem}\label{extreme}
Let $(\ga, \Phi)$ be a $C^*$-dynamical system. Any pure state on $\ga^\Phi$ can be extended
to an extreme invariant state on $\ga$.
\end{lem}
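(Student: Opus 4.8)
The plan is to produce the desired extension in two stages: first secure the existence of \emph{some} $\Phi$-invariant state on $\ga$ restricting to the given pure state $\rho$, and then, among all such invariant extensions, select one that is extreme in the full set $\mathcal{S}^\Phi(\ga)$ of invariant states. Throughout I use that $\ga^\Phi$ is a unital $C^*$-subalgebra of $\ga$: it is the fixed-point set of the unital $*$-homomorphism $\Phi$, hence norm-closed, self-adjoint, multiplicatively closed and containing the unit, so that the notion of a \emph{pure} state $\rho$ on $\ga^\Phi$ is meaningful.

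For the first stage, let $S_\rho$ denote the set of all states $\sigma$ on $\ga$ with $\sigma|_{\ga^\Phi}=\rho$. By Hahn--Banach $\rho$ admits at least one state extension, so $S_\rho$ is nonempty; moreover it is convex and weak$^*$-closed inside the state space of $\ga$, hence weak$^*$-compact. The key observation is that the affine, weak$^*$-continuous map $T\colon\sigma\mapsto\sigma\circ\Phi$ sends $S_\rho$ into itself: for $a\in\ga^\Phi$ one has $(\sigma\circ\Phi)(a)=\sigma(a)=\rho(a)$, while unitality of $\Phi$ ensures $\sigma\circ\Phi$ is again a state. Applying the Markov--Kakutani fixed-point theorem to the single map $T$ on the compact convex set $S_\rho$ yields a fixed point, i.e. a $\Phi$-invariant state extending $\rho$. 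Thus the set
$$K:=\mathcal{S}^\Phi(\ga)\cap S_\rho$$
of invariant extensions of $\rho$ is nonempty; being the intersection of two weak$^*$-closed convex sets it is itself weak$^*$-compact and convex.

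For the second stage, invoke the Krein--Milman theorem to pick an extreme point $\om$ of $K$. I claim this $\om$ is already extreme in the larger set $\mathcal{S}^\Phi(\ga)$. Indeed, suppose $\om=t\om_1+(1-t)\om_2$ with $t\in(0,1)$ and $\om_1,\om_2\in\mathcal{S}^\Phi(\ga)$. Restricting to $\ga^\Phi$ gives $\rho=t\,(\om_1|_{\ga^\Phi})+(1-t)\,(\om_2|_{\ga^\Phi})$, a convex decomposition of the \emph{pure} state $\rho$ into states of $\ga^\Phi$; purity forces $\om_1|_{\ga^\Phi}=\om_2|_{\ga^\Phi}=\rho$, whence $\om_1,\om_2\in K$. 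Extremality of $\om$ within $K$ then yields $\om_1=\om_2$, proving $\om$ extreme in $\mathcal{S}^\Phi(\ga)$, and of course $\om$ extends $\rho$ since $\om\in S_\rho$.

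The only genuinely non-formal input is the existence step, without which there would be nothing to take extreme points of; the cleanest route is the fixed-point argument above, which works verbatim for an arbitrary unital $*$-endomorphism $\Phi$ and does not require $\Phi$ to be invertible. I therefore expect the passage from ``extreme in $K$'' to ``extreme in $\mathcal{S}^\Phi(\ga)$''---where purity of $\rho$ enters exactly once---to be the conceptual heart, while the Markov--Kakutani step is the main technical lever.
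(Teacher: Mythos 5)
Your proof is correct and follows essentially the same route as the paper's: both consider the compact convex set of invariant extensions of the pure state, extract an extreme point via Krein--Milman, and use purity on the restriction to $\ga^\Phi$ to promote extremality in that set to extremality in $\mathcal{S}^\Phi(\ga)$. The only difference is that you explicitly justify non-emptiness of this set via Markov--Kakutani applied to $\sigma\mapsto\sigma\circ\Phi$, a step the paper merely asserts; this is a welcome addition rather than a divergence.
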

\begin{proof}
Let $\om$ be a pure state on $\ga^\Phi$. Define the set
$$C_\om:=\{\varphi\in \mathcal{S}^\Phi(\ga): \varphi\upharpoonright_{\ga^\Phi}=\om \}\, .$$
Observe that $C_\om$ is a non-empty (weakly*) compact convex set.
Therefore, by the Krein-Milman theorem $C_\om$ contains extreme points.
Let $\varphi_0$ any such state. We need to show that $\varphi_0$ is extreme in
$\mathcal{S}^\Phi(\ga)$. To this end, let $\varphi_1, \varphi_2$ be states in $\mathcal{S}^\Phi(\ga)$
such that $\varphi_0=t\varphi_1+(1-t)\varphi_2$ for some $t$ with $0<t<1$.
In particular, by restricting the above equality to the fixed-point subalgebra we find
$$\om=\varphi_0\upharpoonright_{\ga^\Phi}=t\varphi_1\upharpoonright_{\ga^\Phi}+(1-t)\varphi_2\upharpoonright_{\ga^\Phi}\,.$$
Since $\om$ is pure by assumption, we must have
$\varphi_1\upharpoonright_{\ga^\Phi}=\varphi_2\upharpoonright_{\ga^\Phi}=\om$.
This means that $\varphi_1$ and $\varphi_2$ actually lie in $C_\om$.
But because $\varphi_0$ is extreme in $C_\om$, $\varphi_1$ and $\varphi_2$ must be the same state, and the proof
is complete.

\end{proof}

\begin{thm}\label{trivfix}
The fixed-point subalgebra of the dynamical system $(\widetilde{O_A}, \tau)$ is trivial.
\end{thm}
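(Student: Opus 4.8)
The plan is to transfer everything to the concrete (Fock) picture and to isolate the fixed-point elements by a mean-ergodic argument adapted to the Hamel basis. Since $\widetilde{O_A}\cong\widetilde{\mathcal{W}}$ and $\tau$ is unital, an element $b=a+\lambda I\in\widetilde{\mathcal{W}}$ with $a\in\mathcal{W}$, $\lambda\in\bc$ is $\tau$-fixed precisely when $a\in\mathcal{W}^\tau$. Hence it suffices to prove that $\mathcal{W}^\tau=\{0\}$, which immediately gives $\widetilde{O_A}^\tau=\widetilde{\mathcal{W}}^\tau=\bc I$. So I would fix $a\in\mathcal{W}$ with $\tau(a)=a$ and use the (trivial but decisive) identity $a=\frac1n\sum_{k=0}^{n-1}\tau^k(a)$, valid for every $n$, with the aim of letting $n\to\infty$ on the right.

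The main obstacle is visible already here: by Proposition \ref{notuniq} the system is not uniquely ergodic with respect to the fixed-point algebra, so these Cesàro averages need not converge in norm, and no mean-ergodic theorem applies directly. The way around it is to split the average into a piece that vanishes in norm and a piece that converges only strongly, and then to play the two modes of convergence against each other. Concretely, given $\eps>0$ I would choose $b\in\mathcal{W}_0$ with $\|a-b\|<\eps$; since $\tau$ is isometric, $\|a-\frac1n\sum_{k=0}^{n-1}\tau^k(b)\|<\eps$ for every $n$. Using the Hamel basis of Proposition \ref{Hamel}, write $b=\sum_{\lambda}c_\lambda Y_\lambda+\sum_{j}\beta_j A_jA_j^\dag$ with $Y_\lambda\in\Lambda$ and only finitely many nonzero coefficients.

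By Lemma \ref{estimate} the word part averages to zero in norm, $\|\frac1n\sum_{k=0}^{n-1}\tau^k(\sum_\lambda c_\lambda Y_\lambda)\|\to0$. For the diagonal part, $\tau^k(A_jA_j^\dag)=A_{j+k}A_{j+k}^\dag$, and the defining relation $A_iA_i^\dag=I-\sum_{k>i}A_k^\dag A_k$ shows that the projections $A_iA_i^\dag$ increase strongly to $I$ as $i\to+\infty$ (every simple tensor lies in the range of $A_iA_i^\dag$ once $i$ exceeds its leading index). Hence $C_n:=\frac1n\sum_{k=0}^{n-1}\sum_j\beta_j A_{j+k}A_{j+k}^\dag$ converges strongly to $\mu I$, where $\mu:=\sum_j\beta_j$. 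The point is now to upgrade this strong convergence to a norm statement about $a$: from $\|a-C_n\|\le\eps+o(1)$ one gets, for every unit vector $\xi$, $\|(a-\mu I)\xi\|=\lim_n\|(a-C_n)\xi\|\le\eps$, and taking the supremum over $\xi$ yields $\|a-\mu I\|\le\eps$.

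Finally I would let $\eps\to0$. The scalars produced form a Cauchy family (if $\|a-\mu I\|\le\eps$ and $\|a-\mu' I\|\le\eps'$ then $|\mu-\mu'|\le\eps+\eps'$), so $a=\mu_0 I$ for some $\mu_0\in\bc$; since $I\notin\mathcal{W}$, equivalently $\mathcal{W}\cap\bc I=\{0\}$, this forces $\mu_0=0$, i.e. $a=0$. Thus $\mathcal{W}^\tau=\{0\}$ and the theorem follows. I expect the genuinely delicate point to be exactly the passage from the merely strong convergence of $C_n$ to a norm bound on $a$: this is precisely what reconciles the failure of unique ergodicity (Proposition \ref{notuniq}) with triviality of the fixed-point algebra. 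One may alternatively package the conclusion through the state picture already set up: by Lemma \ref{extreme} every pure state of the fixed-point algebra extends to an extreme $\tau$-invariant state, which by Theorem \ref{inv} is $\om_\Om$ or $\om_\infty$; the computation above shows these two restrict to the same (indeed zero) functional on $\mathcal{W}^\tau$, so the fixed-point algebra has a single pure state and must therefore reduce to $\bc I$.
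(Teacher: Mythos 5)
Your proof is correct, and it takes a genuinely different route from the paper's. The paper argues softly through the state space: it uses Lemma \ref{extreme} to show the restriction map $T:\mathcal{S}^\tau(\widetilde{O_A})\to\mathcal{S}(\widetilde{O_A}^\tau)$ is surjective, invokes Theorem \ref{inv} to see that the domain is a segment with exactly two extreme points (so $\widetilde{O_A}^\tau$ has at most two pure states), and then rules out the two-point case because $T$ would be an affine homeomorphism of segments, hence injective, contradicting the failure of unique ergodicity established in Proposition \ref{notuniq}. You instead run a direct, quantitative mean-ergodic argument on a fixed element $a=\frac1n\sum_{k=0}^{n-1}\tau^k(a)$: approximate by $b\in\mathcal{W}_0$, split along the Hamel basis of Proposition \ref{Hamel}, kill the $\Lambda$-part in norm via Lemma \ref{estimate}, and handle the diagonal part by noting that $A_{j+k}A_{j+k}^\dagger\nearrow I$ strongly, so the averages converge strongly to $\mu I$; lower semicontinuity of the operator norm under strong convergence then upgrades this to $\|a-\mu I\|\le\eps$, and $I\notin\mathcal{W}$ forces $a=0$. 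The key steps all check out (in particular the passage from strong convergence of $C_n$ to the bound $\|(a-\mu I)\xi\|=\lim_n\|(a-C_n)\xi\|\le\eps$ is legitimate, and your choice of \emph{forward} averages is exactly what makes the strong limit a multiple of $I$ rather than of $P_\Om$). What each approach buys: the paper's proof recycles Theorem \ref{inv}, Lemma \ref{extreme} and Proposition \ref{notuniq} and requires no further computation; yours bypasses Theorem \ref{inv} and Lemma \ref{extreme} entirely, needs only Lemma \ref{estimate} and the Hamel basis, proves the slightly stronger statement $\mathcal{W}^\tau=\{0\}$ directly (which the paper only records afterwards as a remark), and explains concretely how triviality of the fixed-point algebra coexists with the failure of norm convergence of the Ces\`aro means. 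Your closing alternative via the state picture is redundant once $\mathcal{W}^\tau=\{0\}$ is in hand, but it is not wrong.
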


\begin{proof}

Let $T\colon \mathcal{S}^\tau(\widetilde{O_A})\rightarrow\mathcal{S}({\widetilde{O_A}^\tau})$ be the restriction map, that is
$$T(\varphi):=\varphi\upharpoonright_{O_A^\tau}\,, \,\,\, \varphi\in  \mathcal{S}^\tau(\widetilde{O_A})\, .$$
 $T$ is a surjective continuous affine map between compact convex sets by Lemma \ref{extreme}.
Now, by Theorem \ref{inv} the convex set $\mathcal{S}^\tau(\widetilde{O_A})$ has exactly two extreme points as it is a segment.
Therefore, the fixed-point subalgebra $\widetilde{O_A}^\tau$ can have at most two pure states as follows from Lemma \ref{extreme}.
The conclusion will then be achieved if we show that $\widetilde{O_A}^\tau$ cannot have exactly two pure states.
If  $\widetilde{O_A}^\tau$ had two pure states, then $\mathcal{S}(\widetilde{O_A}^\tau)$ would be a segment as well and
the map $T$ would thus be a homeomorphism.
However, this is  not  the case since $T$  cannot be injective as $(\widetilde{O_A}, \tau)$ is not uniquely ergodic w.r.t.\ the fixed-point
subalgebra, as shown in Proposition \ref{notuniq}.
The only possibility for $\widetilde{O_A}^\tau$ is then to have one pure state, which is the same as having $\widetilde{O_A}^\tau=\mathbb{C}1$.
\end{proof}

\begin{rem}
Without any further effort, one can also see that
$O_A^\tau=0$ and $\mathcal{S}^\tau(O_A)=\{\om_\Om\}$.
\end{rem}

\subsection{Irreducibility of the self-adjoint subalgebra }
For every $i\in\bz$, we define a weakly monotone position operator as
$X_i:= A_i + A_i^\dag$. Throughout the paper the unital $C^*$-algebra generated by all position operators,
$\mathcal{A}:= C^*(I,\{X_i: i\in\bz\})$, will be referred to as the self-adjoint subalgebra of
$\mathcal{W}$.

\begin{lem}\label{limit}
The limit equality
$$\lim_{N\rightarrow\infty}\frac{1}{2N+1}\sum_{i=-N}^N X_i^2=P_\Om+\frac{1}{2}(I-P_\Om)$$
holds in the strong operator topology.
\end{lem}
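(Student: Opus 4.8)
The plan is to expand
$$X_i^2=A_i^2+A_iA_i^\dag+A_i^\dag A_i+(A_i^\dag)^2$$
and to study the four resulting Ces\`aro averages one at a time. Since each $X_i$ is self-adjoint with $\|X_i\|\le 2$, the averages $\frac{1}{2N+1}\sum_{i=-N}^N X_i^2$ are uniformly bounded (by $4$); hence it suffices to establish the strong convergence on the total set consisting of the vacuum $\Om$ and the simple tensors $v=e_{i_1}\otimes\cdots\otimes e_{i_k}$ (with $k\ge1$ and $i_1\ge\cdots\ge i_k$), after which uniform boundedness propagates the limit to all of $\mathcal{F}_{wm}(\ch)$. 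On these vectors the action of each of the four operators is read off directly from the definitions of $A_i$ and $A_i^\dag$.

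First I would isolate the dominant term $A_iA_i^\dag$, which is the initial projection of the creator $A_i^\dag$, {\it i.e.} the orthogonal projection onto $\bc\Om$ together with the closed span of the simple tensors whose leading index is at most $i$. Thus $A_iA_i^\dag\,\Om=\Om$ for all $i$, while $A_iA_i^\dag\,v=v$ exactly when $i\ge i_1$ and $A_iA_i^\dag\,v=0$ otherwise. Averaging, $\frac{1}{2N+1}\sum_{i=-N}^N A_iA_i^\dag\,\Om=\Om$, whereas for $N\ge|i_1|$ one counts
$$\frac{1}{2N+1}\sum_{i=-N}^N A_iA_i^\dag\,v=\frac{N-i_1+1}{2N+1}\,v\longrightarrow\frac12\,v\,,\qquad N\to\infty\,.$$
Since $P_\Om+\frac12(I-P_\Om)$ fixes $\Om$ and acts as $\frac12$ on every $v\perp\Om$, this single term already reproduces the asserted limit, and it remains to show that the other three averages vanish strongly.

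Two of them disappear by a plain term count: $A_i^\dag A_i$ is the projection onto the tensors with leading index exactly $i$, so $A_i^\dag A_i\,v=\delta_{i,i_1}v$ and $\sum_{i=-N}^N A_i^\dag A_i\,v$ reduces to the single vector $v$; similarly $A_i^2\,v$ is non-zero only when $i_1=i_2=i$, contributing at most one vector independent of $N$. Division by $2N+1$ annihilates both, and both terms kill $\Om$ outright. The only genuine obstacle is the creator-squared term $(A_i^\dag)^2$: one has $(A_i^\dag)^2\,v=e_i\otimes e_i\otimes v$ for every $i\ge i_1$ (and $(A_i^\dag)^2\,\Om=e_i\otimes e_i$), so there are of order $N$ non-vanishing summands and a term count no longer suffices. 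The key observation to push through is that these vectors are pairwise orthogonal for distinct $i$ and each of norm $\|v\|$, whence
$$\Big\|\frac{1}{2N+1}\sum_{i=-N}^N (A_i^\dag)^2\,v\Big\|^2=\frac{1}{(2N+1)^2}\!\!\!\sum_{\substack{i\ge i_1\\ |i|\le N}}\!\!\|v\|^2=O\!\left(\frac1N\right)\longrightarrow 0\,.$$
Assembling the four contributions yields the stated strong limit on the total set, and uniform boundedness then extends it to the whole space, completing the argument.
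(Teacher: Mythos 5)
Your proof is correct and follows essentially the same route as the paper: reduce to basis vectors via uniform boundedness, observe that the ``diagonal'' part of $X_i^2$ contributes with density $\tfrac{N-i_1+1}{2N+1}\to\tfrac12$ (and fixes $\Om$), and kill the two-particle creation part by the same pairwise-orthogonality estimate of order $1/\sqrt{N}$. The only difference is organizational --- you expand $X_i^2$ into four monomials and average each separately, while the paper computes $X_i^2$ directly on each basis vector --- so there is nothing substantive to add.
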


\begin{proof}

The proof follows the same idea as  \cite[Lemma 2.2]{W06}.
Because the sequence $\left\{\frac{1}{2N+1}\sum_{i=-N}^N X_i^2\right\}_{n\in\bn}$ is norm bounded, it is enough
to ascertain the stated equality only on vectors of the canonical orthonormal basis of the weakly monotone Fock space.\\
To begin with, note that for every $i\in\bz$ one has $X_i^2\Om= \Om+ e_i\otimes e_i$, hence
$$\frac{1}{2N+1}\sum_{i=-N}^N X_i^2\Om= \Om+\frac{1}{2N+1}\sum_{i=-N}^N e_i\otimes e_i\, .$$\\
Because $\|\frac{1}{2N+1}\sum_{i=-N}^N e_i\otimes e_i\|^2=\frac{1}{2N+1}$, the sequence
$\frac{1}{2N+1}\sum_{i=-N}^N X_i^2\Om$ converges in norm to $\Om$.\\

Let now $e_{i_1}\otimes\cdots\otimes e_{i_k}$ be any basis vector orthogonal to the vacuum vector, {\it i.e.}
$k\geq 1$. Note that for every $i> i_1$ one has
\begin{equation}\label{Xsquare}
X_i^2 (e_{i_1}\otimes\cdots\otimes e_{i_k}) =e_{i_1}\otimes\cdots\otimes e_{i_k}+ e_i\otimes e_i\otimes e_{i_1}\cdots\otimes e_{i_k}
\end{equation} whereas for $i<i_1$ one has
$$X_i^2 (e_{i_1}\otimes\cdots\otimes e_{i_k}) =0\, .$$
Now we have
\begin{align*}
&\frac{1}{2N+1}\sum_{i=-N}^N X_i^2 e_{i_1}\otimes\cdots\otimes e_{i_k}=\frac{1}{2N+1}\sum_{i=i_1}^N X_i^2 e_{i_1}\otimes\cdots\otimes e_{i_k}\\
&=\frac{1}{2N+1} X_{i_1}^2e_{i_1}\otimes\cdots\otimes e_{i_k}+\frac{1}{2N+1}\sum_{i=i_1+1}^N X_i^2 e_{i_1}\otimes\cdots\otimes e_{i_k},
\end{align*}
thus $$\lim_{N\rightarrow\infty}\frac{1}{2N+1}\sum_{i=-N}^N X_i^2 e_{i_1}\otimes\cdots\otimes e_{i_k}=\lim_{N\rightarrow\infty}\frac{1}{2N+1}\sum_{i=i_1+1}^N X_i^2  e_{i_1}\otimes\cdots\otimes e_{i_k}\, .$$
By taking \eqref{Xsquare} into account, the term in the right-hand side of the above equality  can be written as
\begin{align*}
\frac{N-i_1}{2N+1}e_{i_1}\cdots\otimes e_{i_k}+\frac{1}{2N+1}\sum_{i=i_1+1}^N e_i\otimes e_i\otimes e_{i_1}\cdots\otimes e_{i_k}\,.
\end{align*}
Now $$\lim_{N\rightarrow\infty} \frac{N-i_1}{2N+1}e_{i_1}\cdots\otimes e_{i_k}=\frac{1}{2} e_{i_1}\cdots\otimes e_{i_k}$$
and
$$\lim_{N\rightarrow\infty}\frac{1}{2N+1}\sum_{i=i_1+1}^N e_i\otimes e_i\otimes e_{i_1}\cdots\otimes e_{i_k}=0$$
since $\left\|\sum_{i=i_1+1}^N e_i\otimes e_i\otimes e_{i_1}\cdots\otimes e_{i_k}\right\|=\sqrt{N- i_1}$ by orthogonality.
Putting everything together, we finally have
$$\lim_{N\rightarrow\infty}\frac{1}{2N+1}\sum_{i=-N}^N X_i^2 e_{i_1}\otimes\cdots\otimes e_{i_k}=\frac{1}{2}e_{i_1}\otimes\cdots\otimes e_{i_k}$$
and the proof is complete.
\end{proof}

\begin{prop}\label{selfad}
The self-adjoint subalgebra $\mathcal{A}\subset\mathcal{W}$ is an irreducible subalgebra of $\mathcal{B}(\mathcal{F}_{wm}(\ch))$.
\end{prop}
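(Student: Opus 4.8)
The plan is to show that the commutant $\mathcal{A}'$ consists only of scalar multiples of the identity. Suppose $T\in\mathcal{B}(\mathcal{F}_{wm}(\ch))$ commutes with every element of $\mathcal{A}$. Since $\mathcal{A}$ contains the identity and all the position operators $X_i=A_i+A_i^\dag$, it also contains every polynomial in them; crucially, by Lemma \ref{limit} the strong limit $P_\Om+\tfrac12(I-P_\Om)$ lies in the strong closure of $\mathcal{A}$, hence in $\mathcal{A}''$. Because $T$ commutes with $\mathcal{A}$ it commutes with $\mathcal{A}''$, and in particular $T$ commutes with $P_\Om+\tfrac12(I-P_\Om)$, which is an affine function of $P_\Om$. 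Therefore $T$ commutes with $P_\Om$, so $T$ leaves the one-dimensional subspace $H_0=\bc\Om$ invariant and we may write $T\Om=c\,\Om$ for some scalar $c\in\bc$.

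The next step is to propagate this scalar behaviour from $\Om$ to the whole Fock space. First I would apply $T$ to the basis vectors $e_i=A_i^\dag\Om$. Observe that $X_i\Om=e_i$ (since $A_i\Om=0$), so $Te_i=TX_i\Om=X_iT\Om=cX_i\Om=c\,e_i$; thus $T$ acts as the same scalar $c$ on all of $H_1$. I would then proceed inductively on the number of particles: a general basis vector $e_{i_1}\otimes\cdots\otimes e_{i_k}$ with $i_1\geq\cdots\geq i_k$ is obtained by applying a creator, namely $A_{i_1}^\dag(e_{i_2}\otimes\cdots\otimes e_{i_k})=e_{i_1}\otimes\cdots\otimes e_{i_k}$ precisely because $i_1\geq i_2$. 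The task is therefore to realize this single creator action through elements of $\mathcal{A}$ so that the commutation relation can be invoked.

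The main obstacle is that $\mathcal{A}$ is generated by the self-adjoint combinations $X_i=A_i+A_i^\dag$ rather than by the creators alone, so one cannot directly say $T$ commutes with $A_{i_1}^\dag$. The key technical point will be to isolate the creator part. The plan is to compute $X_{i_1}(e_{i_2}\otimes\cdots\otimes e_{i_k})$: the creator term yields $e_{i_1}\otimes e_{i_2}\otimes\cdots\otimes e_{i_k}$ (allowed since $i_1\geq i_2$), while the annihilator term $A_{i_1}(e_{i_2}\otimes\cdots\otimes e_{i_k})=\delta_{i_1,i_2}\,e_{i_3}\otimes\cdots\otimes e_{i_k}$ lands in a lower-particle subspace. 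Using the inductive hypothesis that $T$ acts as $c$ on all subspaces $H_0,\dots,H_{k-1}$, applying $T$ to $X_{i_1}(e_{i_2}\otimes\cdots\otimes e_{i_k})$ and comparing with $X_{i_1}T(e_{i_2}\otimes\cdots\otimes e_{i_k})=cX_{i_1}(e_{i_2}\otimes\cdots\otimes e_{i_k})$ forces the genuinely new $k$-particle component $e_{i_1}\otimes\cdots\otimes e_{i_k}$ to also be scaled by $c$, since the lower-order terms already match. Carrying this induction through all $k$ shows $T=cI$ on the orthonormal basis, whence $T=cI$ and $\mathcal{A}'=\bc I$, proving irreducibility.
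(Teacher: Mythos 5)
Your argument is correct, and its first half coincides with the paper's: both use Lemma \ref{limit} to place $P_\Om+\frac{1}{2}(I-P_\Om)$, and hence its spectral projection $P_\Om$, in $\mathcal{A}''$, so that any $T\in\mathcal{A}'$ preserves $\bc\Om$. Where you diverge is in the finish. The paper proves that $\Om$ is \emph{cyclic} for $\mathcal{A}$ by exhibiting explicit polynomials, via the recursion $q_{n+1}(x)=xq_n(x)-q_{n-1}(x)$, with $q_{n_1}(X_{i_1})\cdots q_{n_k}(X_{i_k})\Om=e_{i_1}^{\otimes n_1}\otimes\cdots\otimes e_{i_k}^{\otimes n_k}$, and then applies the standard argument that a projection in the commutant which fixes or kills a cyclic vector must be $I$ or $0$. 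You instead propagate the eigenvalue equation $T\xi=c\xi$ directly up the particle grading: writing $X_{i_1}(e_{i_2}\otimes\cdots\otimes e_{i_k})=e_{i_1}\otimes\cdots\otimes e_{i_k}+\delta_{i_1,i_2}\,e_{i_3}\otimes\cdots\otimes e_{i_k}$ and using the inductive hypothesis on the $(k-1)$- and $(k-2)$-particle terms isolates the new $k$-particle vector, exactly as the paper's recursion isolates it on the cyclicity side. The two finishes are essentially dual: your induction on $T$ trades the explicit polynomial bookkeeping for an induction hypothesis, and it handles arbitrary $T\in\mathcal{A}'$ at once rather than reducing to projections; the paper's route yields the slightly stronger byproduct that $\Om$ is a cyclic vector for $\mathcal{A}$, which is of independent interest. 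Both are complete proofs.
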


\begin{proof}
We will show that the commutant $\mathcal{A}'$ equals $\mathbb{C}I$.
To this aim, we first point out that $P_\Om$ lies in $\mathcal{A}''$. Indeed, thanks to
Lemma \ref{limit}, we know that $T:=P_\Om+\frac{1}{2}(I-P_\Om)$ certainly sits in $\mathcal{A}''$, which means
$P_\Om$ does the same being a spectral projection of $T$.\\
The second step to take is to prove that $\Om$ is cyclic for $\mathcal{A}$.
We claim that for every $n\in\bn$ and $i\in \bz$ there exists a polynomial $q_n$ (which only depends on $n$) such that
$$q_n(X_i)\Om=e_i^{\otimes^n}\, .$$
This can be seen by induction on $n$. For $n=0$ or $n=1$, there is nothing to prove as $q_0(x)=1$ and $q_1(x)=x$ will obviously do.
As for the inductive step, observe that from $q_n(X_i)\Om=e_i^{\otimes^n}$ we find
$$X_iq_n(X_i)\Om=e_i^{\otimes^{n-1}}+ e_i^{\otimes^{n+1}}= q_{n-1}(X_i)\Om+e_i^{\otimes^{n+1}}$$
which says that $q_{n+1}(x):=xq_n(x)-q_{n-1}(x)$ satisfies $q_{n+1}(X_i)\Om=e_i^{\otimes^{n+1}}$.\\
For a general  basis vector, say $e_{i_1}^{n_1}\otimes\cdots\otimes e_{i_k}^{n_k}$ with
$i_1>\ldots>i_k$, it is easy to see that
$$q_{n_1}(X_{i_1})\cdots q_{n_k}(X_{i_k})\Om=e_{i_1}^{n_1}\otimes\cdots\otimes e_{i_k}^{n_k}\, .$$
We are now in a position to reach the conclusion. If $E$ is a projection in $\mathcal{A}'$, we must have
$EP_\Om=P_\Om E$ from which we see $E\Om=\alpha\Om$ with $\alpha$ being either $0$ or $1$.
But then we have $ET\Om=TE\Om=\alpha T\Om$ for every $T\in\mathcal{A}$.  By cyclicity of $\Om$ we finally have $E=\alpha I$.
\end{proof}

\section{Weakly-monotone $C^*$-algebra over $\bn$}

As with $\bz$, a weakly monotone Fock Hilbert space can be associated with $\bn$ as well:
$$\mathcal{F}_{wm}(\ell^2(\bn)):=\bigoplus_{k=0}^\infty H_k\,$$
where $H_k$ is the linear span of the set of vectors $e_{i_1}\otimes e_{i_2}\otimes\cdots\otimes e_{i_k}$ with
$k\in\bn$   and $ i_1\geq i_2\geq\cdots\geq i_k\geq 1$, whereas $H_0=\bc\Om$.
On $\mathcal{F}_{wm}(\ell^2(\bn))$ weakly monotone creation (annihilation) operators
$A_i^\dag$ ($A_i$), $i$ in $\bn$, can be defined in the exact same way as we did for $\bz$.
We denote by $\mathcal{W}_\bn\subset \cb(\mathcal{F}_{wm}(\ell^2(\bn)))$ the $C^*$-subalgebra generated
by the set $\{A_i, A_i^\dag: i\in\bn\}$.
Unlike the case of $\bz$, the projection $P_\Om$ onto the Fock vacuum does sit in $\mathcal{W}_\bn$, because of the identity
$$P_\Om= A_1A_1^\dag- A_1^\dag A_1\, .$$
As was the case with $\mathcal{W}$,  it is not too difficult to show that $I$ does not belong to $\mathcal{W}_\bn$ either.
We denote by $O_{A_\bn}$ the universal $C^*$-algebra generated by a countable family
$\{s_i: i=0, 1, \ldots\}$ of partial isometries satisfying the following relations:
\begin{equation}\label{defrelN1}
s_i^*s_j=0\,, \quad \textrm{for all}\, \quad i\neq j
\end{equation}
\begin{equation}\label{defrelN2}
s_i^* s_i =\sum_{k=0}^i s_ks_k^*\,, \quad\textrm{for all}\,\, i=0, 1, \ldots
\end{equation}

Note that $O_{A_\bn}$ is a well-defined $C^*$-algebra, because the maximal seminorm on the universal
$*$-algebra generated by $\{s_i: i=0, 1, \ldots\}$ is finite. In addition,  our $C^*$-algebra $O_{A_\bn}$ is not trivial as
$\mathcal{W}_\bn$ is clearly a representation of  $O_{A_\bn}$.\\
Also note that for any $z$ in $\bt$ the set $\{zs_i: i=0, 1, \ldots\}$ still satisfies the defining relations of $O_{A_\bn}$, which by
universality implies that there exists an automorphism $\a_z\in{\rm Aut}(O_{A_\bn})$ uniquely determined by
$\a_z(s_i)= zs_i$ for $i\geq 0$. As is done in the literature, we will refer to the automorphisms
$\{\a_z: z\in\bt\}$ as the gauge automorphisms  or as the gauge action of $\bt$ on $O_{A_\bn}$.\\

As a consequence of relations \eqref{defrelN1}--\eqref{defrelN2}, the generators of $O_{A_\bn}$ also satisfy
relations that are more reminiscent of the concrete weakly monotone $C^*$-algebra. To write these relations down, for $i, j$ natural numbers we define $a_{i, j}$ to be $1$ if $i\geq j$ and $0$ otherwise.
\begin{lem}\label{extrarel}
The generators of  $O_{A_\bn}$ also satisfy the following relations:\\
\begin{itemize}
\item [(i)] $s_is_j=0$ if  $i< j$
\item [(ii)] $s_i^*s_is_j=a_{i, j}s_j$
\end{itemize}
\end{lem}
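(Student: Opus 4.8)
The plan is to derive both identities directly from the defining relations \eqref{defrelN1}--\eqref{defrelN2}, proving (ii) first and then obtaining (i) as an immediate consequence. The only ingredients needed beyond the two relations are the partial-isometry identity $s_j s_j^* s_j = s_j$ and the $C^*$-identity.

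For (ii), I would multiply relation \eqref{defrelN2} on the right by $s_j$, obtaining
$$s_i^* s_i s_j = \sum_{k=0}^i s_k s_k^* s_j\,.$$
By \eqref{defrelN1}, every summand with $k\neq j$ vanishes, since $s_k^* s_j=0$ in that case. Hence the only possibly surviving term is the one with $k=j$, and this term actually occurs in the sum precisely when $j\leq i$. In that case one has $s_j s_j^* s_j = s_j$ because $s_j$ is a partial isometry, and this matches $a_{i,j}s_j$ since $a_{i,j}=1$ for $i\geq j$. When instead $j>i$, the index $j$ does not appear among $0,\ldots,i$, so the sum contributes nothing and $s_i^* s_i s_j = 0 = a_{i,j}s_j$, in agreement with $a_{i,j}=0$ for $i<j$. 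This settles (ii) in all cases.

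For (i), I would feed (ii) into the $C^*$-identity. Assuming $i<j$, I compute
$$(s_i s_j)^*(s_i s_j) = s_j^*\,(s_i^* s_i s_j) = s_j^*\,a_{i,j}\,s_j = a_{i,j}\,s_j^* s_j\,.$$
Since $i<j$ forces $a_{i,j}=0$, the right-hand side is zero, so $\|s_i s_j\|^2 = \|(s_i s_j)^*(s_i s_j)\| = 0$, whence $s_i s_j = 0$.

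I do not expect any genuine obstacle: the whole argument is a short manipulation of the two relations. The one point worth flagging is the logical ordering. A direct attack on (i) is awkward, since the relations are naturally adjoint-symmetric in a way that controls products of the form $s_i^* s_j$ rather than $s_i s_j$; once (ii) is available, however, (i) drops out at once from the $C^*$-identity, which is why I establish (ii) first.
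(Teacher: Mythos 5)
Your proof is correct and follows essentially the same route as the paper: both rest on multiplying relation \eqref{defrelN2} on the right by $s_j$ and using \eqref{defrelN1} to kill every summand except $k=j$. The only (cosmetic) difference is in part (i), where the paper inserts the partial-isometry identity directly, writing $s_is_j=s_is_i^*s_is_j=\sum_{k=0}^i s_is_ks_k^*s_j=0$, while you reach the same conclusion from (ii) via the $C^*$-identity; so your remark that a direct attack on (i) is awkward is not really borne out.
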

\begin{proof}
As for the first equality, if $i< j$  we have
$s_is_j= s_is_i^*s_is_j= \sum_{k=0}^i s_i s_ks_k^* s_j=0$ thanks to
\eqref{defrelN1} and \eqref{defrelN2}.\\
The second equality is got to similarly. Since
$s_i^*s_is_j= \sum_{k=0}^i s_ks_k^* s_j$, for $s_i^*s_is_j$ not to vanish we must have
$i\geq j$, in which case only the $j$-th summand survives yielding the claimed equality.
\end{proof}
We would like to point out that $O_{A_\bn}$ can also be presented as
the Exel-Laca $C^*$-algebra \cite {EL0} associated with the infinite matrix
$A_\bn:= (a_{i, j}: i, j=0, 1, \ldots)$ with $a_{i, j}=1$ for all $i, j$ with $i\geq j$ and $0$ otherwise.
This makes it possible to apply \cite[Proposition 8.5]{EL0} once again to see that $O_{A_\bn}$ fails
to be unital.\\
We set $\mathcal{A}_n:=C^*(s_0, \ldots s_n)\subset O_{A_\bn}$.  Note that $\mathcal{A}_n$ is invariant
for the gauge automorphisms, which clearly act on  $\mathcal{A}_n$ as automorphisms.\\
Denote by $A_n$ the $n+1$ by $n+1$ matrix given by  $A_n:=(a_{i, j})_{i, j=0, \ldots n}$, where
$a_{i, j}$ is the symbol we introduced before stating Lemma \ref{extrarel}.
Let $\mathcal{O}_{A_n}$ denote
the Cuntz-Krieger algebra associated with the matrix $A_n$. As is done in \cite{CK}, we denote by
$\{S_i: i=0, 1, \ldots, n\}$ the generators of $\mathcal{O}_{A_n}$. Note that, for each $n\geq 0$,
$\mathcal{O}_{A_n}$ is a unital $C^*$-algebra with unit $S_n^*S_n$.

\begin{lem}
$\mathcal{A}_n$ is canonically isomorphic with the Cuntz-Krieger algebra $\mathcal{O}_{A_n}$.
\end{lem}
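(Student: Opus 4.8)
The plan is to exhibit $s_0,\dots,s_n$ as a Cuntz--Krieger $A_n$-family inside $\mathcal{A}_n$ and then promote the ensuing surjection to an isomorphism by means of the gauge-invariant uniqueness theorem.

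First I would observe that relations \eqref{defrelN1}--\eqref{defrelN2}, together with Lemma \ref{extrarel}, say precisely that the partial isometries $s_0,\dots,s_n$ form a Cuntz--Krieger family for the finite matrix $A_n$. Indeed, from $s_i^*s_j=0$ for $i\neq j$ one gets $p_ip_j=s_i(s_i^*s_j)s_j^*=0$, so the range projections are pairwise orthogonal; and \eqref{defrelN2} reads $q_i=s_i^*s_i=\sum_{j=0}^i s_js_j^*=\sum_{j=0}^n a_{i,j}p_j$, which is exactly the second Cuntz--Krieger relation associated with $A_n$. Since all these relations only involve indices $\le n$, they hold inside $\mathcal{A}_n=C^*(s_0,\dots,s_n)$. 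Hence, by the universal property of $\mathcal{O}_{A_n}$, the assignment $S_i\mapsto s_i$ extends to a $*$-homomorphism $\Phi\colon\mathcal{O}_{A_n}\to\mathcal{A}_n$, and $\Phi$ is surjective because its range contains all the generators of $\mathcal{A}_n$.

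The substance of the lemma is the injectivity of $\Phi$. Here I would invoke the gauge-invariant uniqueness theorem for Cuntz--Krieger algebras rather than the plain Cuntz--Krieger uniqueness theorem: the latter is unavailable because $A_n$ violates Condition (I), the self-loop at the vertex $0$ (coming from $a_{0,0}=1$, while $a_{0,j}=0$ for $j\ge 1$) having no exit. To apply the gauge-invariant version I must produce a circle action on $\mathcal{A}_n$ intertwined with the gauge action on $\mathcal{O}_{A_n}$ and check that $\Phi$ does not annihilate the relevant projections. The gauge action $\a_z$ of $O_{A_\bn}$ leaves $\mathcal{A}_n$ invariant and satisfies $\a_z(s_i)=zs_i$, so it restricts to a gauge action on $\mathcal{A}_n$; writing $\gamma_z$ for the canonical gauge action on $\mathcal{O}_{A_n}$, one verifies on generators that $\Phi\circ\gamma_z=\a_z\circ\Phi$ for every $z\in\bt$, so $\Phi$ is equivariant. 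Moreover each generator is nonzero: under the Fock representation of $O_{A_\bn}$ on $\mathcal{F}_{wm}(\ell^2(\bn))$ the element $s_i$ is carried to $A_i^\dag\neq 0$, whence $s_i\neq 0$ in $\mathcal{A}_n$ and the range and source projections $p_i,q_i$ are nonzero as well. The gauge-invariant uniqueness theorem then forces $\Phi$ to be injective, yielding the isomorphism $\mathcal{O}_{A_n}\cong\mathcal{A}_n$.

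The only genuinely delicate point is this injectivity step, and more precisely the verification that the hypotheses of the gauge-invariant uniqueness theorem are in force; the presence of the exit-free loop at $0$ is exactly what makes the gauge action indispensable here, since it rules out the simpler uniqueness theorem. Everything else --- the checking of the Cuntz--Krieger relations and the surjectivity of $\Phi$ --- is routine.
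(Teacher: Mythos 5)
Your proof is correct and follows essentially the same route as the paper: universality of $\mathcal{O}_{A_n}$ gives the surjection $S_i\mapsto s_i$, and injectivity comes from the gauge-invariant uniqueness theorem (Theorem 2.3 of an Huef--Raeburn) applied to the restriction of the gauge action $\a_z$ to $\mathcal{A}_n$. You supply more detail than the paper does --- the explicit verification of the Cuntz--Krieger relations, the nonvanishing of the generators via the Fock representation, and the observation that the exit-free loop at $0$ rules out the plain uniqueness theorem --- all of which is accurate.
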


\begin{proof}
By universality of the Cuntz-Krieger algebra $\mathcal{O}_{A_n}$, there must be an epimorphism
$\Psi\colon \mathcal{O}_{A_n}\rightarrow \mathcal{A}_n$ such that $\Psi(S_i)=s_i$ for all $i=0, 1, \ldots, n$.
Finally, the epimorphism $\Psi$ is injective as well as follows from Theorem
2.3 in \cite{AHR}, which applies since $\mathcal{A}_n$ is acted upon by the gauge automorphisms.
\end{proof}
By its very definition, $O_{A_\bn}$ can be seen as the inductive limit of the inductive system $\{\mathcal{A}_n: n=1, \ldots\}$. Note that, however, the natural inclusions $\mathcal{A}_n\subset \mathcal{A}_{n+1}$
fail to be unital.\\

For every $z\in\bt$, we consider the representation $\widetilde{\pi}_{0, z}\colon O_{A_\bn}\rightarrow\cb(\mathcal{F}_{wm}(\ell^2(\bn))) $
given by $\widetilde{{\pi}}_{0, z}(s_0):= z P_\Om$ and $\widetilde{{\pi}}_{0, z}(s_i):=A_i^\dag$ for all $i\in\bn$.
\begin{rem}
It is worth noting that if a representation $\pi\colon O_{A_\bn}\rightarrow\cb(\mathcal{F}_{wm}(\ell^2(\bn)))  $ satisfies $\pi(s_i)=A_i^\dag$ for all $i\geq 1$, then $\pi(s_0)= zP_\Om$ for some $z$ in $\bt$. This follows from the equality $P_\Om= A_1A_1^\dag- A_1^\dag A_1$. Indeed, we have
$\pi(s_0^*s_0)= \pi(s_1^*s_1- s_1s_1^*)= P_\Om$, hence
$\pi(s_0)=\pi(s_0 s_0s_0^*)= \pi(s_0)P_\Om$, and thus  the only possibility is that
$\pi(s_0)=zP_\Om$ for some $z$ in $\bt$ because $\pi(s_0)$ is a partial isometry.
\end{rem}
Choosing $z=1$ yields the so-called Fock representation, which we will denote by $\pi_F$ (rather than $\widetilde{{\pi}}_{0, 1}$).

\begin{prop}\label{faithfulness}
The direct sum $\pi:=\oplus_{z\in\bt}\,\,\widetilde{\pi}_{0, z}$ is a faithful representation of $O_{A_\bn}$.
\end{prop}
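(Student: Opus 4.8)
The plan is to deduce $\ker\pi=0$ by reducing to the finite building blocks $\mathcal A_n=C^*(s_0,\dots,s_n)$, which are honest Cuntz--Krieger algebras via the isomorphism $\mathcal A_n\cong\mathcal O_{A_n}$ established above, and then to apply the gauge-invariant uniqueness theorem of \cite{AHR} on each of them. Since $O_{A_\bn}=\overline{\bigcup_n\mathcal A_n}$ and a $*$-homomorphism that is isometric on every $\mathcal A_n$ is isometric on the dense subalgebra $\bigcup_n\mathcal A_n$, hence on all of $O_{A_\bn}$, it is enough to show that $\pi\upharpoonright_{\mathcal A_n}$ is faithful for each $n$. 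The theorem in \cite{AHR} needs two inputs: that the vertex projections $s_i^*s_i$ be sent to nonzero elements --- which is immediate, since already $\widetilde\pi_{0,1}(s_i^*s_i)$ is a nonzero projection on Fock space --- and that $\pi$ intertwine the gauge action with a circle action on the target. The heart of the argument will therefore be to realize the gauge action $\a$ spatially inside $\pi$.

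To do so I would work on $K:=\bigoplus_{z\in\bt}\mathcal F_{wm}(\ell^2(\bn))$, the Hilbert space of $\pi$, denoting by $\mathcal F_z$ the summand carrying $\widetilde\pi_{0,z}$ and by $N$ the number operator ($N\upharpoonright_{H_k}=k\,\id$). For $w\in\bt$ put $U_w:=w^N$, acting as $w^k$ on the $k$-particle subspace. A short computation on basis vectors gives $U_wA_i^\dag U_w^*=wA_i^\dag$ for all $i$, while $U_wP_\Om U_w^*=P_\Om$ since $P_\Om$ lives in the vacuum sector $H_0$. Hence $\widetilde U_w:=\bigoplus_z U_w$ already produces the correct phase on the creators, $\widetilde U_w\,\pi(s_i)\,\widetilde U_w^*=w\,\pi(s_i)$ for $i\ge1$, but it fixes $\pi(s_0)=\bigoplus_z zP_\Om$, whereas equivariance requires $\bigoplus_z wz\,P_\Om=\pi(\a_w(s_0))$. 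The missing phase on $s_0$, which $U_w$ cannot supply, I would absorb by permuting the summands: let $\Sigma_w$ be the unitary that carries the content of $\mathcal F_z$ onto $\mathcal F_{w^{-1}z}$. As $\Sigma_w$ commutes with $\widetilde U_w$ and with any operator acting uniformly across the summands, the family $W_w:=\widetilde U_w\Sigma_w$ is a unitary representation of $\bt$ satisfying $W_w\,\pi(a)\,W_w^*=\pi(\a_w(a))$ for every $a$; this need only be verified on the generators $s_0$ and $s_i$ ($i\ge1$), which is exactly the computation above.

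Once equivariance is secured, I would observe that although $w\mapsto W_w$ is not strongly continuous on the non-separable space $K$, the automorphisms $\mathrm{Ad}(W_w)$ do restrict to a genuine point-norm continuous circle action on $\pi(\mathcal A_n)$, because there they agree with $\pi\circ\a_w\circ(\pi\upharpoonright_{\mathcal A_n})^{-1}$ and the gauge action $\a$ on $O_{A_\bn}$ is point-norm continuous. Since $\mathcal A_n$ is gauge-invariant, $\pi\upharpoonright_{\mathcal A_n}$ is then a gauge-equivariant representation of $\mathcal A_n\cong\mathcal O_{A_n}$ that is nonzero on every vertex projection, so \cite[Theorem 2.3]{AHR} makes it faithful. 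Letting $n\to\infty$ and passing to the inductive limit as above yields faithfulness of $\pi$.

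The step I expect to be the main obstacle is the construction of the implementing unitaries $W_w$. The number-operator unitaries deal with the creators $s_i$ ($i\ge1$) but are powerless to rotate the phase of the special generator $s_0$, whose image $zP_\Om$ sits in the vacuum sector and is invariant under $U_w$; reconciling this forces the auxiliary permutation $\Sigma_w$ of the continuum of summands, and one must then still check that the resulting (non-strongly-continuous) family descends to a bona fide continuous gauge action at each finite level so that the uniqueness theorem of \cite{AHR} can be brought to bear.
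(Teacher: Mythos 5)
Your proof is correct and follows essentially the same route as the paper's: reduce to the subalgebras $\mathcal{A}_n\cong\mathcal{O}_{A_n}$, invoke the gauge-invariant uniqueness theorem of \cite{AHR}, and pass to the inductive limit over the dense union $\bigcup_n\mathcal{A}_n$. The only difference is that you make explicit the spatial implementation of the gauge action (the number-operator unitaries combined with the permutation $\Sigma_w$ of the summands indexed by $\bt$, needed precisely because $U_w$ fixes $zP_\Om$), a point the paper dismisses with ``the gauge automorphisms are clearly implemented on the corresponding Hilbert space''; your construction of $W_w=\widetilde{U}_w\Sigma_w$ and your remark that only point-norm continuity of the induced action on $\pi(\mathcal{A}_n)$ is required are both correct.
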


\begin{proof}
By applying  Theorem 2.3 in \cite{AHR} we see that, for each $n\in\bn$, the direct sum $\pi=\oplus_{z\in\bt}\,\,\widetilde{\pi}_{0, z}$ restricts to a faithful representation of
$\mathcal{A}_n$ because the gauge automorphisms are unitarily implemented on the corresponding Hilbert space. Indeed, if for any $w\in\mathbb{T}$, we define the unitaries $V_w$ acting on $\mathcal{F}_{wm}(\ell^2(\bn))$  and $U_w$ acting on $\bigoplus_{z\in\mathbb{T}}\mathcal{F}_{wm}(\ell^2(\bn))$ by
$$V_w(\bigoplus_{k\geq0}x_k):=\bigoplus_{k\geq0}w^k x_k\,,\quad x_k\in H_k$$
$$U_w:=\bigoplus_{z\in\mathbb{T}}V_w\, ,$$
then it is very easy to check that $U_w\pi(x)U_w^*=\pi(\alpha_w(x))$ for all $x\in O_{A_\bn}, w\in\mathbb{T}$.
In particular, the restriction of $\pi$ to $\mathcal{A}_n$ is isometric, thus $\pi$ is isometric on $\bigcup_n\mathcal{A}_n$, which proves the claim as $\bigcup_n\mathcal{A}_n$ is dense in $O_{A_\bn}$.
\end{proof}
\begin{rem}
The Hilbert space which the representation $\pi$ above acts on is clearly non-separable.
\end{rem}
The proposition above can also be obtained by directly applying Theorem 2.7 in  \cite{RS} if one regards
$O_{A_\bn}$ as an Exel-Laca algebra.\\

Let $J\subset O_{A_\bn}$ be the ideal generated by $s_0 -s_0s_0^*$.

\begin{prop}\label{faithfulrep}
The quotient $C^*$-algebra $O_{A_\bn}/J$ is isomorphic with $\mathcal{W}_\bn$
through the Fock representation $\pi_F$ of the former.
\end{prop}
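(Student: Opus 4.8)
The plan is to observe first that the Fock representation $\pi_F$ annihilates the generator $s_0-s_0s_0^*$ of $J$: since $\pi_F(s_0)=P_\Om$ is a projection, one has $\pi_F(s_0s_0^*)=P_\Om=\pi_F(s_0)$. Hence $\pi_F$ factors through the quotient, yielding a surjective $*$-homomorphism $\bar\pi_F\colon O_{A_\bn}/J\to\mathcal{W}_\bn$ (surjectivity is immediate, as $\bar\pi_F(\bar s_i)=A_i^\dag$ for $i\geq 1$ exhausts a generating set while $\bar\pi_F(\bar s_0)=P_\Om$). The whole content of the statement is therefore the injectivity of $\bar\pi_F$. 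The obstruction to a routine gauge-invariant uniqueness argument is that the full gauge action $\alpha_z(s_i)=zs_i$ of $O_{A_\bn}$ does \emph{not} descend to the quotient: in $\mathcal{W}_\bn$ the image $P_\Om$ of $s_0$ is a projection and cannot be rescaled, so no automorphism of $\mathcal{W}_\bn$ can match $\alpha_z$.

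The key observation that repairs this is to scale the creators only. I would first check, by universality, that the formulas $\gamma_z(s_0)=s_0$ and $\gamma_z(s_i)=zs_i$ for $i\geq 1$ define an automorphism of $O_{A_\bn}$: the relations \eqref{defrelN1}--\eqref{defrelN2} are preserved, the $k=0$ summand $s_0s_0^*$ in \eqref{defrelN2} being gauge-fixed. Since $\gamma_z$ fixes the generator $s_0-s_0s_0^*$, it preserves $J$ and descends to an action $\bar\gamma$ of $\bt$ on $O_{A_\bn}/J$. On the target, the unitaries $U_z:=z^{N}$, with $N$ the number operator on $\mathcal{F}_{wm}(\ell^2(\bn))$, implement a gauge action $\gamma'_z:=\mathrm{Ad}(U_z)$ with $\gamma'_z(A_i^\dag)=zA_i^\dag$ and $\gamma'_z(P_\Om)=P_\Om$, and $\bar\pi_F$ intertwines $\bar\gamma$ with $\gamma'$. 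I would emphasize that, modulo $J$, one has $\bar s_0=q_1-p_1=\bar s_1^*\bar s_1-\bar s_1\bar s_1^*$, so $O_{A_\bn}/J$ is in fact generated by the creators $\{\bar s_i:i\geq 1\}$ alone; relative to this generating set $\bar\gamma$ is a genuine full gauge action, exactly as $\gamma'$ is for $\mathcal{W}_\bn$.

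With the two actions in hand I would run the standard gauge-invariant uniqueness reduction. Averaging over $\bt$ produces conditional expectations $E=\int_\bt\bar\gamma_z\,dz$ and $E'=\int_\bt\gamma'_z\,dz$ onto the respective fixed-point algebras, both faithful (for $E'$ this is immediate from the spatial implementation; for $E$ it follows by testing against states), and $\bar\pi_F\circ E=E'\circ\bar\pi_F$. Consequently, if $a\in\ker\bar\pi_F$ then $a^*a\in\ker\bar\pi_F$ and $\bar\pi_F(E(a^*a))=E'(\bar\pi_F(a^*a))=0$; so once $\bar\pi_F$ is known to be injective on the fixed-point subalgebra one gets $E(a^*a)=0$, hence $a=0$ by faithfulness of $E$. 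This reduces the problem to the injectivity of $\bar\pi_F$ on the fixed-point (diagonal) algebra.

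For this last step I would exploit the inductive limit $O_{A_\bn}=\overline{\bigcup_n\mathcal{A}_n}$ with $\mathcal{A}_n\cong\mathcal{O}_{A_n}$, which is compatible with $\bar\gamma$ and with $J$. Writing $J_n$ for the ideal of $\mathcal{A}_n$ generated by $s_0-s_0s_0^*$, one has $O_{A_\bn}/J\cong\varinjlim\big(\mathcal{A}_n/J_n\big)$ and $\bar\pi_F=\varinjlim\big(\mathcal{A}_n/J_n\to\mathcal{B}_n\big)$, where $\mathcal{B}_n:=C^*(P_\Om,A_1^\dag,\dots,A_n^\dag)$. At each finite level the quotient $\mathcal{A}_n/J_n$ is the Cuntz--Krieger/graph algebra obtained from the graph of $A_n$ by collapsing the single loop at the vertex $0$ (the corner $C^*(s_0)\cong C(\bt)$ being sent to $\bc$ via $s_0\mapsto p_0$); identifying it as such, the gauge-invariant uniqueness theorem in the form of Theorem~2.3 of \cite{AHR} applies to the equivariant surjection $\mathcal{A}_n/J_n\to\mathcal{B}_n$, its hypotheses being satisfied because every vertex projection maps to a nonzero projection ($p_i\mapsto A_i^\dag A_i\neq 0$ and $p_0\mapsto P_\Om\neq 0$). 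Passing to the limit then gives $\ker\bar\pi_F=\overline{\bigcup_n\ker(\mathcal{A}_n/J_n\to\mathcal{B}_n)}=0$, as desired. I expect the main obstacle to lie precisely in this finite-level analysis: verifying that $J\cap\mathcal{A}_n=J_n$ and presenting $\mathcal{A}_n/J_n$ as a bona fide graph $C^*$-algebra (note the vertex $0$ becomes a sink, so the collapsed matrix has a zero row) to which the cited uniqueness theorem genuinely applies, together with the compatibility of the two inductive systems under $\bar\pi_F$. An alternative to the graph-algebra identification, should it prove awkward, is to check injectivity of $\bar\pi_F$ directly on the finite-dimensional building blocks of the diagonal, using that the diagonal projections are sent to explicit, mutually orthogonal, nonzero projections on $\mathcal{F}_{wm}(\ell^2(\bn))$.
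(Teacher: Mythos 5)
Your strategy---factor $\pi_F$ through $J$ and prove injectivity of the induced map $\bar\pi_F$ by a gauge-invariant uniqueness argument for the modified circle action fixing $s_0$ and rotating $s_i$, $i\geq 1$---is viable, and the individual steps check out: the action is well defined by universality, preserves $J$, is implemented on $\mathcal{W}_\bn$ by $z^N$, and averaging yields faithful, intertwined conditional expectations. The one genuine soft spot is the finite-level step you yourself flag: Theorem 2.3 of \cite{AHR} is stated for Cuntz--Krieger algebras of matrices with no zero rows, whereas after collapsing $s_0$ to the projection $p_0$ the vertex $0$ becomes a sink, so you need the graph-algebra version of gauge-invariant uniqueness that tolerates sinks (or your fallback of checking injectivity directly on the AF core). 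You must also first justify that $\mathcal{A}_n/J_n$ is the \emph{universal} algebra for the collapsed relations before any uniqueness theorem applies; this is done by writing down mutually inverse maps, the same trick used below. Note finally that your third paragraph (reduction to the fixed-point algebra) is redundant given your fourth: gauge-invariant uniqueness at each finite level already gives injectivity of $\mathcal{A}_n/J_n\to\mathcal{B}_n$ outright, and the inductive limit then finishes.

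The paper's proof takes a different and much lighter route, with no uniqueness theorem for the quotient at all. It introduces the subalgebra $\mathcal{B}=C^*(s_0s_0^*,s_1,s_2,\ldots)\subset O_{A_\bn}$ and shows $O_{A_\bn}/J\cong\mathcal{B}$ by exhibiting mutually inverse $*$-homomorphisms: universality gives $O_{A_\bn}\twoheadrightarrow\mathcal{B}$, $s_0\mapsto s_0s_0^*$, which kills $s_0-s_0s_0^*$ and so descends to $\Psi:O_{A_\bn}/J\to\mathcal{B}$, while the inclusion $\mathcal{B}\hookrightarrow O_{A_\bn}$ followed by the quotient map inverts it. Faithfulness of $\pi_F$ on $\mathcal{B}$ is then free: the faithful representation $\oplus_{z\in\bt}\widetilde{\pi}_{0,z}$ of Proposition \ref{faithfulness} restricts on $\mathcal{B}$ to a direct sum of copies of one and the same representation, namely $\pi_F\!\upharpoonright_{\mathcal{B}}$, because $\widetilde{\pi}_{0,z}(s_0s_0^*)=P_\Om$ and $\widetilde{\pi}_{0,z}(s_i)=A_i^\dag$ do not depend on $z$. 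Your approach buys a self-contained uniqueness statement for the presentation of the quotient; the paper's buys brevity by recycling Proposition \ref{faithfulness} and avoids the graph-with-sinks technicality entirely.
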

\begin{proof}
We denote by $\mathcal{B}\subset O_{A_\bn}$ the $C^*$-subalgebra generated by
the set $\{s_0s_0^*, s_1, s_2, \ldots\}$. The restriction of $\pi$ to $\mathcal{B}$ is faithful and
is a direct sum of Fock representations with the same phase $z=1$. Therefore, $\pi_F$ is a faithful representation
of $\mathcal{B}$. Since $\pi_F(\mathcal{B})=\mathcal{W}_\bn$, the statement will be proved if we show that
$O_{A_\bn}/J$ and $\mathcal{B}$ are isomorphic. To this end, note that by universality $O_{A_\bn}$ projects onto
$\mathcal{B}$, which yields an epimorphism $\Psi:O_{A_\bn}/J \rightarrow \mathcal{B}$ given by
$\Psi([s_0]_J)= s_0s_0^*$ and, for $i\geq 1$,  $\Psi([s_i]_J)= s_i$. Injectivity of $\Psi$ can be proved by exhibiting
its inverse $\Phi:\mathcal{B}\rightarrow O_{A_\bn}/J$, which is obtained by composing the inclusion of
$\mathcal{B}$ in $O_{A_\bn}$ with the canonical projection of the latter onto $O_{A_\bn}/J$. Explicitly, $\Phi$ acts on the set of generators as $\Phi(s_0s_0^*)= [s_0]_J$ and, for $i\geq 1$, $\Phi(s_i)= [s_i]_J$.
\end{proof}
As a straightforward application of Proposition \ref{faithfulrep}, we derive the following space-free characterization of the weakly
monotone $C^*$-algebra.
\begin{cor}\label{weaklyrel}
The weakly monotone $C^*$-algebra over $\bn$ is isomorphic to the universal $C^*$-algebra generated by a countable family of partial isometries $\{s_i: i=0, 1, \ldots\}$ satisfying \eqref{defrelN1}--\eqref{defrelN2} and $s_0=s_0s_0^*$.
\end{cor}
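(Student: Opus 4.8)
The plan is to identify the universal $C^*$-algebra $B$ described in the statement, whose generators we temporarily rename $\{t_i : i = 0, 1, \ldots\}$ so as not to clash with the generators $s_i$ of $O_{A_\bn}$, with the quotient $O_{A_\bn}/J$ of Proposition \ref{faithfulrep}; the conclusion $B\cong\mathcal{W}_\bn$ will then be immediate from that proposition. First I would check that $B$ genuinely exists as a $C^*$-algebra: its defining relations are \eqref{defrelN1}--\eqref{defrelN2} supplemented by $t_0=t_0t_0^*$, so the maximal $C^*$-seminorm on the universal $*$-algebra is dominated by the (finite) maximal seminorm already available for $O_{A_\bn}$, whence $B$ is well-defined.

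Next I would produce a pair of mutually inverse surjections. Since the $t_i$ satisfy \eqref{defrelN1}--\eqref{defrelN2}, universality of $O_{A_\bn}$ furnishes a $*$-homomorphism $\phi\colon O_{A_\bn}\to B$ with $\phi(s_i)=t_i$. The extra relation $t_0=t_0t_0^*$ forces $s_0-s_0s_0^*\in\ker\phi$, so $\phi$ annihilates the ideal $J$ and descends to a surjection $\overline{\phi}\colon O_{A_\bn}/J\to B$ sending $[s_i]_J$ to $t_i$. Conversely, in the quotient the classes $[s_i]_J$ satisfy \eqref{defrelN1}--\eqref{defrelN2} (these already hold in $O_{A_\bn}$), and since $s_0-s_0s_0^*\in J$ we also have $[s_0]_J=[s_0]_J[s_0]_J^*$; by universality of $B$ this yields a surjection $\psi\colon B\to O_{A_\bn}/J$ with $\psi(t_i)=[s_i]_J$.

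The homomorphisms $\overline{\phi}$ and $\psi$ invert each other on the respective sets of generators, so they are mutually inverse $*$-isomorphisms and $B\cong O_{A_\bn}/J$. Invoking Proposition \ref{faithfulrep}, which identifies $O_{A_\bn}/J$ with $\mathcal{W}_\bn$ through the Fock representation $\pi_F$, we obtain $B\cong\mathcal{W}_\bn$, as claimed.

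The argument is essentially a double bookkeeping of universal properties, so I do not anticipate any serious obstacle. The one point deserving care is the faithfulness of $\overline{\phi}$, which here is handled not by a gauge-invariance or uniqueness theorem but simply by exhibiting the explicit inverse $\psi$; this works precisely because the additional relation $s_0=s_0s_0^*$ corresponds exactly to the generator $s_0-s_0s_0^*$ of $J$, neither more nor less.
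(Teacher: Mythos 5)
Your proposal is correct and follows exactly the route the paper intends: the corollary is stated there as a "straightforward application" of Proposition \ref{faithfulrep}, and your universal-property bookkeeping identifying the algebra with the extra relation $s_0=s_0s_0^*$ with the quotient $O_{A_\bn}/J$ is precisely the argument the paper leaves implicit. No gaps; the only detail worth a passing remark is that the classes $[s_i]_J$ remain partial isometries in the quotient (since $ss^*s=s$ passes to quotients), which is needed to invoke the universal property of $B$.
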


For every $n\geq 1$, we denote by $J_n\subset O_{A_\bn}$ the closed two-sided ideal generated by the finite set $\{s_0, s_1, \ldots, s_{n-1}\}$ and by $p_n$ the canonical projection onto the quotient $p_n:O_{A_\bn}\rightarrow O_{A_\bn}/J_n$. That said, we can end this section with a proposition which will come in useful
when we deal with the representation theory of  $O_{A_\bn}$.

\begin{prop}\label{iso}
For every $n\geq 1$, the quotient $O_{A_\bn}/J_n$ is isomorphic to $O_{A_\bn}$ through the isomorphism
$\Psi_n:O_{A_\bn}\rightarrow O_{A_\bn}/J_n$ given by
$\Psi_n(s_i)=[s_{i+n}]_{J_n}$ for every $i\geq 0$.
\end{prop}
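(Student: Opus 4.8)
The plan is to establish the isomorphism by producing an explicit two-sided inverse, with both $\Psi_n$ and its inverse manufactured from the universal property of $O_{A_\bn}$. First I would check that $\Psi_n$ is well defined. Writing $t_i := [s_{i+n}]_{J_n}\in O_{A_\bn}/J_n$, it suffices to verify that the family $\{t_i : i\geq 0\}$ consists of partial isometries satisfying relations \eqref{defrelN1}--\eqref{defrelN2}; universality then delivers a $*$-homomorphism $\Psi_n$ with $\Psi_n(s_i)=t_i$. The orthogonality relation \eqref{defrelN1} is immediate, since $i\neq j$ forces $i+n\neq j+n$, whence $t_i^*t_j=[s_{i+n}^*s_{j+n}]_{J_n}=0$.

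The only computation requiring care is relation \eqref{defrelN2} for the $t_i$, and here the definition of $J_n$ does the work: since $[s_k]_{J_n}=0$ for $0\leq k\leq n-1$, the first $n$ range projections $[s_ks_k^*]_{J_n}$ vanish in the quotient. Consequently, starting from $s_{i+n}^*s_{i+n}=\sum_{k=0}^{i+n}s_ks_k^*$ and reindexing $k=m+n$ in the surviving terms, one obtains $t_i^*t_i=\big[\sum_{k=n}^{i+n}s_ks_k^*\big]_{J_n}=\sum_{m=0}^{i}t_mt_m^*$, which is exactly \eqref{defrelN2}. Surjectivity of $\Psi_n$ is then automatic, because the quotient is generated by the classes $[s_i]_{J_n}$, those with $i<n$ are zero, and the remaining ones are precisely the $t_m$.

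For injectivity I would build the inverse directly rather than appeal to a uniqueness theorem. Consider the elements $u_j:=s_{j-n}$ for $j\geq n$ and $u_j:=0$ for $0\leq j\leq n-1$ inside $O_{A_\bn}$. The same reindexing argument shows that $\{u_j : j\geq 0\}$ satisfies \eqref{defrelN1}--\eqref{defrelN2} (the case $i<n$ of \eqref{defrelN2} reducing to the trivial identity $0=0$), so by universality there is an endomorphism $\phi$ of $O_{A_\bn}$ with $\phi(s_j)=u_j$. Since $\phi(s_j)=0$ for $j<n$, the ideal $J_n$ is contained in $\ker\phi$, hence $\phi$ factors through a homomorphism $\Phi_n:O_{A_\bn}/J_n\rightarrow O_{A_\bn}$ determined by $\Phi_n([s_{i+n}]_{J_n})=s_i$.

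Finally I would verify that $\Phi_n$ and $\Psi_n$ are mutually inverse by testing on generators: $\Phi_n(\Psi_n(s_i))=\Phi_n([s_{i+n}]_{J_n})=s_i$, and $\Psi_n(\Phi_n([s_{i+n}]_{J_n}))=\Psi_n(s_i)=[s_{i+n}]_{J_n}$, while the classes $[s_j]_{J_n}$ with $j<n$ vanish and are preserved trivially. As $*$-homomorphisms on these universal algebras are determined by their values on generators, both composites are the identity, so $\Psi_n$ is an isomorphism. The only genuinely delicate point throughout is the bookkeeping in relation \eqref{defrelN2}: one must notice that killing the first $n$ generators shifts the cut-off of the defining sum in exactly the right way, so that the truncated sum reproduces the same relation after reindexing. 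Once this combinatorial shift is seen to be compatible with \eqref{defrelN2}, no analytic obstacle remains.
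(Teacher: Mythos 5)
Your proof is correct, and for the injectivity part it takes a genuinely different route from the paper. The paper, after obtaining $\Psi_n$ from universality exactly as you do, proves injectivity by observing that $J_n$ is gauge invariant, so the gauge action of $\bt$ descends to $O_{A_\bn}/J_n$, and then invokes the gauge-invariant uniqueness argument already used for Proposition \ref{faithfulness} (ultimately Theorem 2.3 of \cite{AHR} applied on the finite subalgebras $\mathcal{A}_m$). You instead apply universality a second time in the opposite direction: the shifted-down family $u_j$ (equal to $0$ for $j<n$ and to $s_{j-n}$ for $j\geq n$) satisfies \eqref{defrelN1}--\eqref{defrelN2}, yields an endomorphism annihilating $J_n$, and hence a homomorphism $\Phi_n$ on the quotient that inverts $\Psi_n$ on generators. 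Your reindexing computations for both families are correct, and the argument is more elementary and self-contained: it needs no uniqueness theorem and no separate verification that the classes $[s_{i+n}]_{J_n}$ are nonzero (that comes for free once $\Phi_n\circ\Psi_n=\mathrm{id}$). The only point worth flagging is that you tacitly treat $0$ as an admissible partial isometry when invoking the universal property for the family $\{u_j\}$; this is harmless here, since $0$ satisfies $u\,u^*u=u$ and the relations degenerate to trivial identities for $j<n$, but it deserves a sentence. The paper's gauge-theoretic route is what one would reach for in situations where no explicit inverse presents itself, and it is shorter given that the gauge machinery is already in place from Proposition \ref{faithfulness}; your route has the advantage of exhibiting the inverse explicitly.
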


\begin{proof}
We start by noting that the map $\Psi_n$ in the statement is a well-defined $*$-homomorphism by
universality of $O_{A_\bn}$ because the set $\{[s_{i+n}]_{J_n}: i\geq 0\}$ satisfies
\eqref{defrelN1} and \eqref{defrelN2}.\\
In order to prove that $\Psi_n$ is injective as well, we observe that the gauge action of $\bt$
passes to the quotient algebra $O_{A_\bn}/J_n$ since $J_n$ is gauge invariant. Reasoning exactly as we
did in the proof of Proposition \ref{faithfulness}, we can conclude that $\Psi_n$ is a $*$-isomorphism.
\end{proof}

\subsection{Representation theory}

The next result, which can be seen as a generalization of Theorem 2.7 in \cite{AHR},
characterizes $\widetilde{{\pi}}_{0, z}$ as the sole irreducible representations of
$O_{A_\bn}$ sending $s_0$ to a non-zero operator.

\begin{prop}\label{int}
If $\pi$ is an irreducible representation of $O_{A_\bn}$ such that $\pi(s_0)\neq 0$, then there exists
$z$ in $\bt$ such that $\pi\cong\widetilde{{\pi}}_{0, z}$.

\end{prop}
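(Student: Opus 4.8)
The plan is to single out a unit vector $\xi$ in the range of $P_0:=\pi(p_0)$ that behaves exactly like the Fock vacuum, to prove it is cyclic, and then to invoke uniqueness of the GNS construction. Write $S_i:=\pi(s_i)$. Since relation \eqref{defrelN2} for $i=0$ reads $s_0^*s_0=s_0s_0^*$, the partial isometry $S_0$ is normal and $P_0=\pi(s_0s_0^*)=\pi(s_0^*s_0)$ is its common initial and final projection; as $\pi(s_0)\neq 0$ we have $P_0\neq 0$, and $S_0$ restricts to a unitary of $K:=P_0H_\pi$. The crux of the whole argument will be to show that $K$ is one-dimensional.

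The first step is to identify the corner $p_0O_{A_\bn}p_0$. Here Proposition \ref{faithfulness} enters: in the faithful representation $\rho:=\bigoplus_{z\in\bt}\widetilde{\pi}_{0,z}$ one has $\rho(p_0)=\bigoplus_{z}P_\Om$, and since $P_\Om$ is the rank-one projection onto $\bc\Om$, the compression of any $\rho(a)$ reads $\rho(p_0)\rho(a)\rho(p_0)=\bigoplus_{z}\langle\widetilde{\pi}_{0,z}(a)\Om,\Om\rangle\,P_\Om$. Operators of this block-diagonal scalar form manifestly commute, so $\rho(p_0O_{A_\bn}p_0)$ is commutative, and faithfulness of $\rho$ makes $p_0O_{A_\bn}p_0$ commutative. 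Moreover $z\mapsto\langle\widetilde{\pi}_{0,z}(a)\Om,\Om\rangle$ is continuous (a Laurent polynomial on words), so, identifying $s_0=p_0s_0p_0$ with the coordinate function $z\mapsto z$, whose powers and conjugates are dense, one gets $p_0O_{A_\bn}p_0\cong C(\bt)$. I expect this identification of the corner to be the main obstacle, since it is precisely what encodes the phase $z$.

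Rank-one-ness of $P_0$ is then immediate: because $\pi$ is irreducible and $P_0\neq 0$, the compressed algebra $P_0\pi(O_{A_\bn})P_0$ is strong-operator dense in $\cb(K)$ and hence acts irreducibly on $K$, and an abelian $C^*$-algebra can act irreducibly only on a one-dimensional space, whence $\dim K=1$. Consequently $S_0|_K$ is multiplication by some $z\in\bt$: fixing a unit vector $\xi\in K$ we obtain $S_0\xi=z\xi$ (so $S_0^*\xi=\bar z\xi$), $P_0\xi=\xi$, and $\pi(p_i)\xi=0$ for $i\geq 1$ by orthogonality of the $p_i$. Under the isomorphism $p_0O_{A_\bn}p_0\cong C(\bt)$ the character afforded by $\xi$ sends $s_0$ to $z$, hence is evaluation at $z$.

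It remains to check cyclicity of $\xi$ and to match states. Let $V$ be the closed linear span of $\{\pi(s_{i_1}\cdots s_{i_k})\xi:\ i_1\geq\cdots\geq i_k\geq 1\}\cup\{\xi\}$. Using \eqref{defrelN1} and Lemma \ref{extrarel}, one sees $V$ is invariant under every $S_i$ (a product either lengthens the decreasing word or vanishes) and under every $S_i^*$ (via $s_i^*s_j=\delta_{ij}q_i$, $q_is_j=a_{ij}s_j$, together with $S_0^*\xi=\bar z\xi$ and $\pi(q_i)\xi=\xi$); thus $V$ reduces $\pi$ and, being nonzero, equals $H_\pi$ by irreducibility. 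Finally, for any word $w$ one has $\langle\pi(w)\xi,\xi\rangle=\langle\pi(p_0wp_0)\xi,\xi\rangle$, which by the previous step is the evaluation at $z$ of the corner element $p_0wp_0\in C(\bt)$, namely $\langle\widetilde{\pi}_{0,z}(w)\Om,\Om\rangle$. Hence the vector state of $\xi$ coincides with that of $\Om$ in $\widetilde{\pi}_{0,z}$; since both $\xi$ and $\Om$ are cyclic, uniqueness of the GNS construction yields $\pi\cong\widetilde{\pi}_{0,z}$.
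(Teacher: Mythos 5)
Your proof is correct, but it takes a genuinely different route from the paper's. The paper works with $U=\pi(s_0)$ directly: it decomposes the unitary $U|_{\mathcal K}$ as a direct integral $\int^\oplus_{\sigma(U)} z\,1_{\mathcal K(z)}\,{\rm d}\mu$ and constructs an explicit isometry $W$ intertwining $\int^\oplus_{\sigma(U)}\widetilde{\pi}_{0,z}\,{\rm d}\mu$ with $\pi$, so that irreducibility forces $\sigma(U)$ to be a singleton $\{z\}$ with one-dimensional fibre, whence $\pi\cong\widetilde{\pi}_{0,z}$. You instead identify the corner $p_0O_{A_\bn}p_0$ with $C(\mathbb{T})$ through the faithful representation of Proposition \ref{faithfulness}, invoke the standard fact that the compression of an irreducible representation to a corner acts irreducibly on the range of the corresponding projection, and conclude $\dim\mathcal K=1$ from commutativity; the phase $z$ is then read off as the character of $C(\mathbb{T})$ afforded by $\xi$, and GNS uniqueness finishes the job. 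Your route is more elementary --- no direct integrals or measurable fields --- and the computation $p_0O_{A_\bn}p_0\cong C(\mathbb{T})$ is a nice structural fact in its own right. What the paper's heavier machinery buys is reusability: essentially the same intertwiner $W$ is recycled in Lemma \ref{multiplicity} and Theorem \ref{dirin} to decompose arbitrary, not necessarily irreducible, representations, which your argument does not yield directly. Two small remarks: the explicit verification that your subspace $V$ is reducing is superfluous, since in an irreducible representation every nonzero vector is automatically cyclic; and it is worth stating explicitly that the identification of the corner with $C(\mathbb{T})$ is isometric because $\rho$ is faithful and the norm of a block-diagonal operator with scalar blocks is the supremum of the moduli of those scalars, so that the closed span of the Laurent monomials coming from words is indeed all of $C(\mathbb{T})$.
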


\begin{proof}
Throughout the proof, $\ch_\pi$ will denote the Hilbert space of the representation $\pi$.
Note that $\ch_\pi$ is a separable Hilbert space thanks to irreducibility of $\pi$ and separability of $O_{A_\bn}$.
Set $U:=\pi(s_0)$. Since $U$ is a normal partial isometry, the initial and final space of $U$ coincide.
Denote this common (closed) subspace of  $\ch_\pi$ by $\ck$, and observe that
the restriction of $U$ to $\mathcal{K}$ is a unitary, which we continue to denote by $U$.
We will show that irreducibility of $\pi$ implies that $\ck$ must be one-dimensional.\\
More precisely, we next show that if $\ck$ fails to be one-dimensional,  the non-trivial decomposition
of $U$ as a direct integral results in a corresponding decomposition of our representation
$\pi$.\\
Since $\ch_\pi$
is separable, $U$ can be decomposed into a direct integral as follows.
There exist a measurable field of (non-zero) Hilbert spaces
$\s(U)\ni z\mapsto \ck(z)$ and a Borel probability measure
$\mu$ on $\s(U)$ such that $\ck=\int^\oplus_{\s(U)} \ck(z){\rm d}\mu$ and
$U=\int^\oplus_{\s(U)} z1_{\ck(z)}\di\mu$ (up to unitary equivalence), see
{\it e.g.} Theorem 1. in \cite[Chapter 6]{Dix}.
We also recall that
$\mu$ is a basic measure for $U$, so its support is the whole $\s(U)$.\\
Consider the constant field of Hilbert spaces
$\s(U)\ni z\mapsto \ch(z)$, where each
$\ch(z)$ is the weakly monotone Fock space $\mathcal{F}_{wm}(\ell^2(\bn))$.
Note that $\s(U)\ni z\mapsto\widetilde{\pi}_{0, z}$ is a measurable field of representations
of $O_{A_\bn}$ because $\widetilde{\pi}_{0,z}(s_i)$ does not depend on $i$ unless $i=0$, in which
case  $\widetilde{\pi}_{0,z}(s_0)=zP_\Om$ that is clearly a measurable field of operators.\\
We next aim to show that the direct integral representation
$\int^\oplus_{\s(U)} \widetilde{\pi}_{0,z}{\rm d}\mu$ can be realized as a subrepresentation of $\pi$.
This will be done by exhibiting an isometry $W$ that intertwines $\int^\oplus_{\s(U)} \widetilde{\pi}_{0,z}{\rm d}\mu$
with $\pi$. To this end, we first need to set up some notation.
We denote by $\Gamma$ the set of all ordered $k$-tuples $\gamma=(i_1, \ldots, i_k)$, where $k, i_1, \ldots, i_k$ are in $\bn$
and $i_1\geq\ldots\geq i_k\geq 1$. We also allow $k$ to be $0$ , in which case $\gamma=\emptyset$.
For any $\gamma=(i_1, \ldots, i_k)\in\Gamma$, we set $e_\gamma=e_{i_1}\otimes\cdots\otimes e_{i_k}$ (with
$e_\emptyset= \Om$) as vectors in $\mathcal{F}_{wm}(\ell^2(\bn))$, and $s_\gamma= s_{i_1}\cdots s_{i_k}$ as elements
of the abstract algebra $O_{A_\bn}$ (assuming $s_\emptyset=I$ by convention).
Note that $\overline{{\rm span}}\{f e_\gamma: \gamma\in \Gamma, f\in L^2(\s(U), \mu)\}
=\int^\oplus_{\s(U)} \ch(z){\rm d}\mu$, and $f e_\gamma\perp g e_\beta$ if $\gamma\neq \beta$, because
$\int^\oplus_{\s(U)} \ch(z){\rm d}\mu$ is isometrically isomorphic with the Hilbert tensor product $L^2(\s(U), \mu)\overline{\otimes}\mathcal{F}_{wm}(\ell^2(\bn))$.\\
Finally, we need to consider a measurable field $\s(U)\ni z\mapsto \Om(z)\in\ck(z)$ of unit vectors, {\it i.e.}
$\|\Om(z)\|^2=1$ for all $z\in\s(U)$. This allows us to embed $L^2(\sigma(U), \mu)$ into $\mathcal{K}$
through the isometry $V: L^2(\sigma(U), \mu)\rightarrow\mathcal{K}$, which takes any
function $f$ in $L^2(\sigma(U), \mu)$ to the measurable field $(Vf) (z):= f(z)\Om(z)$,  {\rm a.e.} for $z\in\s(U)$.\\
Having established the necessary notation, we are now ready to define
$W: \int^\oplus_{\s(U)} \ch(z){\rm d}\mu\rightarrow\ch_\pi $. We start by defining $W$ on a dense subspace as
$$ W\,\, \sum_{\gamma\in \Gamma_0} f_\gamma e_\gamma:=\sum_{\gamma\in\Gamma_0}\pi(s_\gamma)Vf_\gamma\,,$$
where $\Gamma_0$ is a finite subset of $\Gamma$. The squared norm of the vector in the left-hand side of the definition
above can be computed easily as follows:
\begin{align*}
\big\|\sum_{\gamma\in \Gamma_0} f_\gamma e_\gamma\big\|^2=\sum_{\gamma\in \Gamma_0} \int_{\s(U)}|f_\gamma|^2{\rm d}\mu\, .
\end{align*}
The squared norm of the vector in the right-hand side is a shade more laborious to compute. To this end, for every
$\gamma$ in $\Gamma_0$ define
$\xi_\gamma:= Vf_\gamma$. By definition, the vectors
$\xi_\gamma$ lie in $\ck$ (that is in ${\rm Ran}\, U$) for all $\gamma$ in $\Gamma_0$. Since
the initial projection of $\pi(s_\gamma)$ contains $\ck$ for all $\gamma$, we have
\begin{align*}
\|\pi(s_\gamma)\xi_\gamma\|^2= \|\xi_\gamma\|^2= \|f_\gamma\|^2 \int_{\s(U)} |f_\gamma|^2{\rm d}\mu\, .
\end{align*}
Now for different $\gamma, \gamma'$ in $\Gamma_0$ the vectors $\pi(s_\gamma)\xi_\gamma$ and $\pi(s_{\gamma'})\xi_\gamma'$  are seen at once to be orthogonal, which means
\begin{align*}
\|\sum_{\gamma\in \Gamma_0} \pi(s_\gamma)\xi_\gamma\|^2=\sum_{\gamma\in\Gamma_0} \|\xi_\gamma\|^2=\sum_{\gamma\in\Gamma_0}  \int_{\s(U)} |f_\gamma|^2{\rm d}\mu\,,
\end{align*}
which shows that $W$ can be extended to an isometry defined on the whole $\int^\oplus_{\s(U)} \ch(z){\rm d}\mu$.\\
We now move on to show that $W$ is an intertwiner. For this, it is sufficient to check the involved equality only on the generators of
$O_{A_\bn}$, namely that $W \int^\oplus_{\s(U)} \widetilde{\pi}_{0,z}(s_i){\rm d}\mu= \pi(s_i) W$ for all
$i\geq 0$. By linearity and boundedness of the involved operators it is enough to ascertain our equality only on vectors of the type
$fe_\gamma$, with $f$ in $L^2(\s(U), \mu)$ and $\gamma=(i_1, \ldots, i_k)$ in $\Gamma$. This is a matter of easy computations. Indeed, for $i\geq 1$ we have:

\begin{align*}
W \bigg(\int^\oplus_{\s(U)} \widetilde{\pi}_{0,z}(s_i){\rm d}\mu\bigg)\, fe_\gamma&=
 W f_\gamma \widetilde{\pi_z}(s_i) e_\gamma\\
&= \pi(s_{(i, \gamma)})Vf_\gamma\\
&=\pi(s_i)\pi(s_\gamma)Vf_\gamma\\
&=\pi(s_i)W\,f e_\gamma\,,
\end{align*}
where $s_{(i, \gamma)}$ is $0$ if $i<i_1$ and $s_{(i, \gamma)}= s_{(i, i_1, \cdots, i_k)}$ if $i\geq i_1$.\\
The case $i=0$ is dealt with separately. First, if $i=0$ and $\gamma$ is not empty, then the equality
is trivially satisfied because both sides vanish. If $i=0$ and $\gamma=\emptyset$, then we have:
\begin{align*}
W \bigg(\int^\oplus_{\s(U)} \widetilde{\pi}_{0,z}(s_0){\rm d}\mu\bigg)\, f\Om=
 Wg\Om= Vg= U Vf=\pi(s_0) W (f\Om)\,,
\end{align*}
where $g$ in $L^2(\s(U), \mu)$ is defined as $g(z):=zf(z)$ ($\mu$ {\rm a.e.}).\\
Now the direct integral $\int^\oplus_{\s(U)} \widetilde{\pi}_{0,z}{\rm d}\mu(z)$ cannot  be irreducible unless
$\sigma(U)$ is a singleton, say $\s(U)=\{z\}$ and $\mu$ is the Dirac measure $\delta_z$ and
$K(z)=\bc$.\\

\noindent
From now on we can then assume $\ck=\bc\xi$, where $\xi$ is a norm-one vector. Since $U$ is unitary on $\ck$, there exists
$z$ in $\bc$ with $|z|=1$ such that $U\xi=z\xi$. Thanks to what we saw above, $\widetilde{{\pi}}_{0, z}$ can be realized as a subrepresentation of $\pi$. But because $\pi$ is irreducible,  we must have $\widetilde{{\pi}}_{0, z}\cong \pi$.
\end{proof}

As a straightforward consequence of the above result, we can derive the following
characterization of the Fock representation of $\mathcal{W}_\bn$.
\begin{cor}
The Fock representation $\pi_F$ is the only irreducible representation of the weakly monotone $C^*$-algebra that sends
$s_0$ to a non-zero projection.
\end{cor}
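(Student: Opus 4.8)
The plan is to reduce the statement to Proposition \ref{int} by exploiting the identification of the weakly monotone $C^*$-algebra $\mathcal{W}_\bn$ with the quotient $O_{A_\bn}/J$ furnished by Proposition \ref{faithfulrep}, where $J$ is the ideal generated by $s_0-s_0s_0^*$. First I would observe that an irreducible representation $\rho$ of $\mathcal{W}_\bn$ is the same datum as an irreducible representation $\pi$ of $O_{A_\bn}$ annihilating $J$: since $\pi$ and $\rho$ share the same range, irreducibility is preserved under this correspondence, and the requirement $\pi(J)=0$ is precisely $\pi(s_0)=\pi(s_0s_0^*)=\pi(s_0)\pi(s_0)^*$.

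Next I would record the elementary fact that a partial isometry $v$ satisfies $v=vv^*$ exactly when $v$ is a projection; indeed $v=vv^*$ forces $v^*=v$ and then $v^2=v$. Consequently, the representations of $O_{A_\bn}$ that descend to $\mathcal{W}_\bn$ are exactly those sending $s_0$ to a projection, and the hypothesis that $s_0$ is sent to a \emph{non-zero} projection translates into the condition $\pi(s_0)\neq 0$. This places us squarely in the setting of Proposition \ref{int}.

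Applying that proposition, I would obtain some $z\in\bt$ with $\pi\cong\widetilde{\pi}_{0,z}$. Since $\widetilde{\pi}_{0,z}(s_0)=zP_\Om$ and this operator must be a (non-zero) projection, self-adjointness yields $\bar z=z$ and the idempotency relation $z^2P_\Om=zP_\Om$ yields $z^2=z$, so that $z=1$. Hence $\pi\cong\widetilde{\pi}_{0,1}=\pi_F$, and under the identification $\mathcal{W}_\bn\cong O_{A_\bn}/J$ this says that $\rho$ is unitarily equivalent to the Fock representation, which is the desired conclusion.

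I do not expect any genuine obstacle: all the substantive work is carried out in Proposition \ref{int}, which already pins the representation down to the family $\{\widetilde{\pi}_{0,z}\}_{z\in\bt}$ indexed by the full circle. The corollary merely isolates the unique phase $z=1$ compatible with $s_0$ being a projection rather than a general partial isometry. The only point demanding a little care is the bookkeeping in passing between $\mathcal{W}_\bn$ and the quotient $O_{A_\bn}/J$, and in particular verifying that the hypothesis on $s_0$ is matched correctly on both sides of this identification.
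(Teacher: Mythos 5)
Your proposal is correct and follows exactly the route the paper intends: the corollary is stated there without proof as a ``straightforward consequence'' of Proposition \ref{int}, and your argument—translating an irreducible representation of $\mathcal{W}_\bn\cong O_{A_\bn}/J$ into an irreducible representation of $O_{A_\bn}$ killing $J$, hence sending $s_0$ to a projection, and then using $\widetilde{\pi}_{0,z}(s_0)=zP_\Om$ to force $z=1$—is precisely the deduction being left to the reader. No gaps.
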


That said, we continue our analysis of irreducible representations of $O_{A_\bn}$.
For every $n\geq 1$ and $z$ in $\bt$, we can consider the representations of $O_{A_\bn}$
defined as $\widetilde{\pi}_{n,z}:=\widetilde{\pi}_{0, z}\circ\Psi_n^{-1}\circ p_n$, which are irreducible by definition. More importantly, they exhaust the catalogue of all irreducible representations of $O_{A_\bn}$.

\begin{thm}\label{catalogue}
If $\pi$ is an irreducible representation of $O_{A_\bn}$, then there exists $n\geq 0$ such that $\pi\cong\widetilde{\pi}_{n,z}$ for some $z$ in $\bt$.
\end{thm}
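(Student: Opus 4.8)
The plan is to reduce the general case to Proposition \ref{int} by shifting the generators. Let $\pi$ be an irreducible representation of $O_{A_\bn}$ on a nonzero Hilbert space $\ch_\pi$. First I would observe that $\pi$ cannot kill all the generators at once: if $\pi(s_i)=0$ for every $i\geq 0$, then $\pi$ would vanish on the $*$-algebra generated by $\{s_i: i\geq 0\}$ and hence, by continuity, on all of $O_{A_\bn}$, contradicting irreducibility (which forces $\ch_\pi\neq 0$ and $\pi\neq 0$). By the well-ordering of $\bn$, there is therefore a smallest integer $n\geq 0$ with $\pi(s_n)\neq 0$, so that $\pi(s_0)=\cdots=\pi(s_{n-1})=0$. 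If $n=0$ we are already done by Proposition \ref{int}, so we may assume $n\geq 1$.

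Next I would show that $\pi$ factors through the quotient by $J_n$. Since $J_n$ is the closed two-sided ideal generated by $\{s_0, \ldots, s_{n-1}\}$ and $\pi$ annihilates each of these generators, continuity of $\pi$ gives $\pi(J_n)=\{0\}$. Consequently $\pi$ descends to a representation $\bar{\pi}$ of the quotient $O_{A_\bn}/J_n$ with $\pi=\bar{\pi}\circ p_n$, and $\bar{\pi}$ is again irreducible because $\pi$ is.

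Now I would transport $\bar{\pi}$ back to $O_{A_\bn}$ through the isomorphism $\Psi_n$ of Proposition \ref{iso}. Setting $\rho:=\bar{\pi}\circ\Psi_n$, one obtains an irreducible representation of $O_{A_\bn}$ satisfying $\rho(s_0)=\bar{\pi}(\Psi_n(s_0))=\bar{\pi}([s_n]_{J_n})=\pi(s_n)\neq 0$. By Proposition \ref{int} there exists $z\in\bt$ with $\rho\cong\widetilde{\pi}_{0,z}$. Since $\bar{\pi}=\rho\circ\Psi_n^{-1}$, this yields $\pi=\rho\circ\Psi_n^{-1}\circ p_n\cong\widetilde{\pi}_{0,z}\circ\Psi_n^{-1}\circ p_n=\widetilde{\pi}_{n,z}$, which is exactly the asserted conclusion; unwinding a unitary $V$ that implements $\rho\cong\widetilde{\pi}_{0,z}$ shows that the same $V$ implements $\pi\cong\widetilde{\pi}_{n,z}$.

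The conceptual core of the argument, and the only place where real work is hidden, is Proposition \ref{int}; the present statement merely propagates that classification up the tower of quotients by exploiting the self-similarity $O_{A_\bn}/J_n\cong O_{A_\bn}$. The points requiring a little care are the very first step—ruling out $\pi(s_i)=0$ for all $i$ so as to guarantee that the index $n$ exists—together with the routine but necessary verification that $\pi$ genuinely annihilates the entire ideal $J_n$ rather than just its listed generators, so that the factorization $\pi=\bar{\pi}\circ p_n$ is legitimate.
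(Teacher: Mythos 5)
Your proposal is correct and follows essentially the same route as the paper: take the minimal $n$ with $\pi(s_n)\neq 0$, factor $\pi$ through $O_{A_\bn}/J_n$, pull back along the isomorphism $\Psi_n$ of Proposition \ref{iso} to get an irreducible representation not killing $s_0$, and invoke Proposition \ref{int}. The only difference is that you explicitly justify the existence of the index $n$ (ruling out that $\pi$ kills every generator), a small point the paper leaves implicit.
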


\begin{proof}
Let $n$ be the minimum of the set $\{i\in\bn: \pi(s_i)\neq 0\}$.
In other words, our representation satisfies $\pi(s_i)=0$ for all $i=0, 1, \ldots, n-1$ and
$\pi(s_i)\neq 0$ for all $i\geq n$. Since ${\rm Ker}\, \pi$ contains $J_n$, we see that
$\pi$ factors as $\pi=\rho\circ p_n$, where $\rho$ is an irreducible representation of
the quotient algebra $O_{A_\bn}/J_n$. By Proposition \ref{iso} the quotient algebra is isomorphic with
$O_{A_\bn}$ itself through the isomorphism $\Psi_n$ given by $\Psi_n(s_i)=[s_{i+n}]_{J_n}$, for
$i\geq 0$. The composition $\rho\circ \Psi_n$ is then an irreducible representation of $O_{A_\bn}$ that by construction
sends $s_0$ to a non-zero operator. Therefore, Proposition \ref{int} applies leading to
$\rho\circ \Psi_n=\widetilde{\pi}_{0,z}$ for some $z$ in $\bt$ and we are done.
\end{proof}

The list of all irreducible representations of $\mathcal{W}_\bn$ can be given as well. Indeed, one can reason in much the same way as in the above result noting that the quotient of $\mathcal{W}_\bn$ by the ideal $\widetilde{J}_n$ generated by
$\{P_\Om, A_1, \ldots, A_n\}$ is again (isomorphic with) the whole $O_{A_\bn}$. Therefore, we have the following.

\begin{cor}
If $\pi$ is an irreducible representation of $O_{A_\bn}/ J\cong \mathcal{W}_\bn$, then either
$\pi$ is the Fock representation or there exist $n\geq 1$ and $z$ in $\bt$ such that
$\pi\cong\widetilde{\pi}_{n,z}$.
\end{cor}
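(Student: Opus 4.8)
The plan is to transfer the classification already obtained in Theorem \ref{catalogue} across the quotient map, using the isomorphism $\mathcal{W}_\bn\cong O_{A_\bn}/J$ of Proposition \ref{faithfulrep}. Write $q:O_{A_\bn}\to O_{A_\bn}/J$ for the quotient homomorphism. The first step is the standard observation that, since $q$ is a surjective $*$-homomorphism, the assignment $\rho\mapsto\rho\circ q$ gives a bijection (preserving irreducibility and unitary equivalence) between representations of $O_{A_\bn}/J$ and representations of $O_{A_\bn}$ that annihilate $J$. Thus, given an irreducible representation $\pi$ of $\mathcal{W}_\bn\cong O_{A_\bn}/J$, the representation $\pi\circ q$ is an irreducible representation of $O_{A_\bn}$ with $(\pi\circ q)(J)=0$, and conversely every such representation descends to $\mathcal{W}_\bn$.

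By Theorem \ref{catalogue}, $\pi\circ q\cong\widetilde{\pi}_{n,z}$ for some $n\geq 0$ and $z\in\bt$. The heart of the argument is then to decide precisely for which pairs $(n,z)$ the representation $\widetilde{\pi}_{n,z}$ annihilates the ideal $J$. Since $J$ is generated by $s_0-s_0s_0^*$, and a representation kills a two-sided ideal as soon as it kills a generating element, this reduces to checking when $\widetilde{\pi}_{n,z}(s_0-s_0s_0^*)=0$.

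The two cases split cleanly. For $n\geq 1$, I would use $\widetilde{\pi}_{n,z}=\widetilde{\pi}_{0,z}\circ\Psi_n^{-1}\circ p_n$ together with $s_0\in J_n$ (recall that $J_n$ is generated by $\{s_0,\dots,s_{n-1}\}$), so that $p_n(s_0)=0$ and hence $\widetilde{\pi}_{n,z}(s_0)=0$; consequently $\widetilde{\pi}_{n,z}(s_0-s_0s_0^*)=0$ automatically, for every $z\in\bt$. For $n=0$, I would compute directly from the definition $\widetilde{\pi}_{0,z}(s_0)=zP_\Om$, which gives $\widetilde{\pi}_{0,z}(s_0s_0^*)=|z|^2P_\Om=P_\Om$ and therefore $\widetilde{\pi}_{0,z}(s_0-s_0s_0^*)=(z-1)P_\Om$, an operator that vanishes exactly when $z=1$, i.e. precisely for the Fock representation $\pi_F=\widetilde{\pi}_{0,1}$. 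Collecting the two cases yields the stated dichotomy: either $\pi$ is the Fock representation, or $\pi\cong\widetilde{\pi}_{n,z}$ with $n\geq 1$ and $z\in\bt$ arbitrary.

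I do not expect a genuine obstacle here; the only point requiring care is the bookkeeping in the first paragraph, namely that factoring an irreducible representation through a quotient by an ideal it annihilates preserves both irreducibility and unitary equivalence, so that the list produced for $O_{A_\bn}$ transports faithfully to $\mathcal{W}_\bn$ without collapsing or duplicating classes. Alternatively, as indicated in the surrounding discussion, one could bypass the lifting entirely and rerun the proof of Theorem \ref{catalogue} inside $\mathcal{W}_\bn$, using that the quotient of $\mathcal{W}_\bn$ by the ideal $\widetilde{J}_n$ generated by $\{P_\Om,A_1,\dots,A_n\}$ is again isomorphic to $O_{A_\bn}$; this route leads to the same conclusion.
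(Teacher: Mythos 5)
Your proof is correct, but it runs in the opposite direction from the paper's. The paper disposes of this corollary by rerunning the argument of Theorem \ref{catalogue} \emph{inside} $\mathcal{W}_\bn$: one observes that the quotient of $\mathcal{W}_\bn$ by the ideal $\widetilde{J}_n$ generated by $\{P_\Om, A_1,\ldots, A_n\}$ is again isomorphic to the whole $O_{A_\bn}$, and then repeats the factorization-through-the-quotient argument verbatim (this is the alternative route you mention in your last sentence). You instead lift an irreducible representation of $O_{A_\bn}/J$ back to $O_{A_\bn}$ via the quotient map, invoke Theorem \ref{catalogue} upstairs, and then filter the resulting list $\{\widetilde{\pi}_{n,z}\}$ by the condition of annihilating the generator $s_0-s_0s_0^*$ of $J$; the computation $\widetilde{\pi}_{0,z}(s_0-s_0s_0^*)=(z-1)P_\Om$ correctly isolates $z=1$ in the $n=0$ stratum, while $s_0\in J_n$ handles $n\geq 1$. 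Your approach buys a cleaner logical structure --- the classification theorem is used as a black box and no part of its proof needs to be repeated --- at the modest cost of the bookkeeping about lifting representations across a quotient, which you handle correctly (pulling back along a surjection is a bijection onto the representations killing the ideal, preserving irreducibility and unitary equivalence, and a representation kills an ideal as soon as it kills a generating set). The paper's route avoids that bookkeeping but silently relies on identifying the quotient $\mathcal{W}_\bn/\widetilde{J}_n$ with $O_{A_\bn}$, which is the analogue of Proposition \ref{iso} and is itself only sketched. Either argument is complete; yours is, if anything, the more carefully justified of the two.
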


\begin{cor}\label{typeI}
$O_{A_\bn}$ is a type-I $C^*$-algebra.
\end{cor}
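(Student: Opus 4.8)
The plan is to verify for $O_{A_\bn}$ the same criterion that was used negatively in Corollary \ref{nottypeI}: a separable $C^*$-algebra is of type $I$ if and only if $\pi(O_{A_\bn})\supseteq\mathcal{K}(\ch_\pi)$ for \emph{every} irreducible representation $\pi$, where $\mathcal{K}(\ch_\pi)$ denotes the compact operators on the representation space $\ch_\pi$ \cite{G}. Since $O_{A_\bn}$ is generated by the countable family $\{s_i\}$ it is separable, so the criterion applies, and the whole problem reduces to showing that the image of each irreducible representation contains the compacts. By Theorem \ref{catalogue} this catalogue is exhausted, up to unitary equivalence, by the representations $\widetilde{\pi}_{n,z}$ with $n\geq 0$ and $z\in\bt$, so it is enough to treat these.

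First I would dispose of the index $n$ altogether. Indeed $\widetilde{\pi}_{n,z}=\widetilde{\pi}_{0,z}\circ\Psi_n^{-1}\circ p_n$, and the map $\Psi_n^{-1}\circ p_n$ is a surjection of $O_{A_\bn}$ onto itself, being the composition of the quotient map $p_n$ with the isomorphism $\Psi_n^{-1}$ furnished by Proposition \ref{iso}. Consequently $\widetilde{\pi}_{n,z}(O_{A_\bn})=\widetilde{\pi}_{0,z}(O_{A_\bn})$, and since both representations act on the one and the same space $\mathcal{F}_{wm}(\ell^2(\bn))$, it suffices to establish the single inclusion $\mathcal{K}(\mathcal{F}_{wm}(\ell^2(\bn)))\subseteq\widetilde{\pi}_{0,z}(O_{A_\bn})$.

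For this last step the key input is precisely the feature that distinguishes the $\bn$ case from the $\bz$ case, namely that the vacuum projection lies in the algebra. Concretely, since $\widetilde{\pi}_{0,z}(s_0)=zP_\Om$ with $|z|=1$, the rank-one projection $P_\Om$ belongs to $\widetilde{\pi}_{0,z}(O_{A_\bn})$. I would then appeal to the elementary fact that an irreducibly acting $C^*$-algebra containing a nonzero compact operator must contain all compacts: the image $\widetilde{\pi}_{0,z}(O_{A_\bn})$ is irreducible (as $\widetilde{\pi}_{0,z}$ is) and contains the rank-one projection $P_\Om$, whence it contains $\mathcal{K}(\mathcal{F}_{wm}(\ell^2(\bn)))$, which closes the argument.

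I do not expect a genuine obstacle, since the substantive work has already been carried out in Theorem \ref{catalogue}; the only points requiring care are the bookkeeping that $\widetilde{\pi}_{n,z}$ and $\widetilde{\pi}_{0,z}$ share the same range and the invocation of the rank-one-projection criterion, both routine. It is worth emphasizing the contrast with Corollary \ref{nottypeI}: there the Fock representation of $O_A$ failed to be type $I$ exactly because $P_\Om$ was shown \emph{not} to lie in $\mathcal{W}$ (Proposition \ref{vacumnotalg}), so its image missed the compacts; here the identity $P_\Om=A_1A_1^\dag-A_1^\dag A_1$ guarantees the opposite, and it is this single difference that flips the conclusion.
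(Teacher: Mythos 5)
Your proof is correct and takes essentially the same route as the paper's: both reduce via Theorem \ref{catalogue} to the observation that each irreducible representation $\widetilde{\pi}_{n,z}$ contains the vacuum projection, hence a nonzero compact operator, in its range. The only cosmetic difference is that the paper invokes Sakai's criterion directly (a nonzero compact in every irreducible image suffices), whereas you invoke Glimm's characterization and bridge the gap with the standard fact that an irreducibly acting $C^*$-algebra containing one nonzero compact contains them all.
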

\begin{proof}
By virtue of a known characterization of type-I $C^*$-algebras given by Sakai in \cite{Sak}, it is enough to make sure that (the image of) any irreducible representation of
$O_{A_\bn}$ contains a non-zero compact operator. This last property is certainly satisfied in the light of
Theorem \ref{catalogue}. Indeed, the projection onto the vacuum belongs to $\widetilde{\pi}_{n, z}(O_{A_\bn})$  for all $z$ in $\bt$ and for all $n\geq 0$.
\end{proof}
\begin{cor}
The weakly monotone $C^*$-algebra $\mathcal{W}_\bn$  is type I.
\end{cor}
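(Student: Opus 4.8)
The plan is to deduce this from the two facts already in hand, namely that $O_{A_\bn}$ is type I and that $\mathcal{W}_\bn$ is a quotient of it. By Proposition \ref{faithfulrep} the Fock representation implements an isomorphism $\mathcal{W}_\bn\cong O_{A_\bn}/J$, while Corollary \ref{typeI} asserts that $O_{A_\bn}$ is type I. I would then invoke the standard permanence property that the class of type-I (equivalently postliminal) $C^*$-algebras is stable under passage to quotients by closed two-sided ideals. Applying this to the ideal $J\subset O_{A_\bn}$ gives at once that $O_{A_\bn}/J\cong\mathcal{W}_\bn$ is type I, which is the assertion.

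The only step requiring more than a bare citation is the permanence statement itself, and I expect this to be the single point to pin down — though it is entirely routine. A clean self-contained route avoids quoting that theorem and instead reruns the argument of Corollary \ref{typeI} verbatim, now using Sakai's criterion \cite{Sak} together with the classification of irreducible representations already obtained. The corollary preceding this one lists every irreducible representation of $\mathcal{W}_\bn\cong O_{A_\bn}/J$ as being either the Fock representation $\pi_F$ or one of the $\widetilde{\pi}_{n,z}$ with $n\geq 1$ and $z\in\bt$. Since $\widetilde{\pi}_{n,z}=\widetilde{\pi}_{0,z}\circ\Psi_n^{-1}\circ p_n$ has the same range as $\widetilde{\pi}_{0,z}$, and the latter contains the rank-one vacuum projection $P_\Om=\widetilde{\pi}_{0,z}(s_0s_0^*)$ in its image — exactly as exploited in the proof of Corollary \ref{typeI} — while $\pi_F(\mathcal{W}_\bn)$ contains $P_\Om=A_1A_1^\dag-A_1^\dag A_1$, every irreducible representation of $\mathcal{W}_\bn$ has a non-zero compact operator in its image.

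By Sakai's characterization this forces $\mathcal{W}_\bn$ to be of type I, completing the argument. I would present the quotient-permanence version as the primary proof for its brevity, and note the representation-theoretic version as the reason the permanence step causes no difficulty here: the requisite irreducible representations, and the fact that each captures $P_\Om$, have already been established, so no genuine obstacle remains.
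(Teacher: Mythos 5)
Your primary argument is exactly the paper's proof: $\mathcal{W}_\bn\cong O_{A_\bn}/J$ by Proposition \ref{faithfulrep}, and the type-I property passes to quotients, so Corollary \ref{typeI} gives the result. The alternative route via Sakai's criterion and the classification of irreducible representations is a correct (if redundant) elaboration, but the main line of reasoning coincides with the paper's.
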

\begin{proof}
Being type I  is preserved under taking quotients, and $\mathcal{W}_\bn$ is a quotient of
$O_{A_\bn}$ by Proposition \ref{faithfulrep}.
\end{proof}

In the following, for each $z$ in $\bt$, we denote by $\ch(z)$  the weakly monotone Fock space $\mathcal{F}_{wm}(\ell^2(\bn))$, exactly as in
Proposition \ref{int}.

\begin{lem}\label{multiplicity}
Let $\pi$ be a representation of $O_{A_\bn}$ acting on a separable Hilbert space $\ch_\pi$ with
$\pi(s_0)\neq 0$. There exist a Borel probability
measure $\mu$ on $\bt$, a measurable function $m:\bt\rightarrow \bn\cup\{\infty\}$, and an isometry $W:\int^\oplus_\bt m(z)\ch(z)\, {\rm d}\mu\rightarrow \ch_\pi $ such that:
\begin{itemize}
\item [(i)] $W$ intertwines $\int^\oplus_\bt m(z)\widetilde{\pi}_{0,z} \,{\rm d}\mu$ with $\pi$;
\item [(ii)] the restriction of $\pi(s_0)$ to the orthogonal space of the range of $W$ is zero,
\end{itemize}
where, for every $z\in\bt$,  $m(z)\ch(z)$ is the direct sum of $\ch(z)$ with itself $m(z)$ times and
$m(z)\widetilde{{\pi}}_{0, z}$ is the direct sum of $\widetilde{{\pi}}_{0, z}$ with itself
$m(z)$ times.
\end{lem}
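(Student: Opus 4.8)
The plan is to adapt the construction from the proof of Proposition \ref{int}, this time retaining the spectral multiplicity of $\pi(s_0)$ instead of forcing it to be one. I would first set $U:=\pi(s_0)$ and note that, by relation \eqref{defrelN2} at $i=0$, one has $s_0^*s_0=s_0s_0^*$, so $U$ is a normal partial isometry whose initial and final spaces both equal the range $\ck$ of the projection $\pi(s_0s_0^*)$; restricted to $\ck$, $U$ is a unitary. Since $\ch_\pi$ is separable, the spectral multiplicity theory for unitaries (see {\it e.g.} Theorem 1 in \cite[Chapter 6]{Dix}) furnishes a basic Borel probability measure $\mu$ on $\bt$ (concentrated on $\sigma(U)$), a $\mu$-measurable multiplicity function $m:\bt\to\bn\cup\{\infty\}$, and a measurable field $z\mapsto\ck(z)$ with $\dim\ck(z)=m(z)$, realizing $\ck\cong\int^\oplus_\bt\ck(z)\,{\rm d}\mu$ and $U\cong\int^\oplus_\bt z\,1_{\ck(z)}\,{\rm d}\mu$.

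Because the fibre $\ch(z)=\mathcal{F}_{wm}(\ell^2(\bn))$ is a constant field, the target space collapses to a tensor product: $\int^\oplus_\bt m(z)\ch(z)\,{\rm d}\mu\cong\int^\oplus_\bt\ck(z)\otimes\ch(z)\,{\rm d}\mu\cong\ck\otimes\mathcal{F}_{wm}(\ell^2(\bn))$. Under this identification the representation $\int^\oplus_\bt m(z)\widetilde{\pi}_{0,z}\,{\rm d}\mu$ becomes the representation $\rho$ of $O_{A_\bn}$ on $\ck\otimes\mathcal{F}_{wm}(\ell^2(\bn))$ determined by $\rho(s_i)=1_\ck\otimes A_i^\dag$ for $i\geq 1$ and $\rho(s_0)=U\otimes P_\Om$, the latter because $\widetilde{\pi}_{0,z}(s_0)=zP_\Om$ and $\int^\oplus_\bt z\,1_{\ck(z)}\,{\rm d}\mu=U$. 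Keeping the notation $\Gamma$, $s_\gamma$, $e_\gamma$ of Proposition \ref{int}, I would then define $W:\ck\otimes\mathcal{F}_{wm}(\ell^2(\bn))\to\ch_\pi$ on the dense span of simple tensors by $W(\zeta\otimes e_\gamma):=\pi(s_\gamma)\zeta$ for $\zeta\in\ck$ and $\gamma\in\Gamma$.

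The verification that $W$ is a well-defined isometry carries over verbatim from Proposition \ref{int}, resting on the two facts established there: the initial projection of each $\pi(s_\gamma)$ contains $\ck$, so that $\pi(s_\gamma)$ is isometric on $\ck$; and for $\gamma\neq\gamma'$ the subspaces $\pi(s_\gamma)\ck$ and $\pi(s_{\gamma'})\ck$ are mutually orthogonal. These give $\|W\sum_{\gamma\in\Gamma_0}\zeta_\gamma\otimes e_\gamma\|^2=\sum_{\gamma\in\Gamma_0}\|\zeta_\gamma\|^2$, matching the squared norm upstairs. For the intertwining relation $W\rho(s_i)=\pi(s_i)W$ it suffices to test on simple tensors $\zeta\otimes e_\gamma$: when $i\geq 1$ one uses $A_i^\dag e_\gamma=e_{(i,\gamma)}$ and $\pi(s_i)\pi(s_\gamma)=\pi(s_{(i,\gamma)})$, both sides vanishing for $i<i_1$ by Lemma \ref{extrarel}(i); when $i=0$ one distinguishes $\gamma\neq\emptyset$ (both sides zero, since $P_\Om e_\gamma=0$ and $\pi(s_0s_\gamma)=0$) from $\gamma=\emptyset$ (where $\pi(s_0)\zeta=U\zeta$ for $\zeta\in\ck$). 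Finally, property (ii) is immediate from the $\gamma=\emptyset$ summand: $W(\zeta\otimes\Om)=\zeta$ shows $\ck\subseteq{\rm Ran}\,W$, hence ${\rm Ran}(W)^\perp\subseteq\ck^\perp=\ker\pi(s_0)$, so $\pi(s_0)$ annihilates ${\rm Ran}(W)^\perp$.

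Since every algebraic identity is inherited from Proposition \ref{int}, I expect the only genuinely delicate part to be the measure-theoretic bookkeeping: extracting the basic measure $\mu$ and the multiplicity function $m$ from the unitary $U|_\ck$ via Hahn--Hellinger theory, verifying that $z\mapsto\widetilde{\pi}_{0,z}$ is a measurable field of representations (as already observed in Proposition \ref{int}), and justifying the collapse $\int^\oplus_\bt m(z)\ch(z)\,{\rm d}\mu\cong\ck\otimes\mathcal{F}_{wm}(\ell^2(\bn))$ that turns the direct integral of the $\widetilde{\pi}_{0,z}$'s into the concrete representation $\rho$.
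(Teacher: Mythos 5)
Your proposal is correct and follows essentially the same route as the paper's proof: decompose the normal partial isometry $\pi(s_0)$ via spectral multiplicity theory on its initial space $\ck$, and define the intertwiner on simple tensors by $W(\zeta\otimes e_\gamma)=\pi(s_\gamma)\zeta$, with the isometry, intertwining, and range properties checked exactly as in Proposition \ref{int}. The only difference is cosmetic: you package the direct integral of the constant field as the tensor product $\ck\otimes\mathcal{F}_{wm}(\ell^2(\bn))$, whereas the paper writes the same map in coordinates using the cut-off functions $c_k$.
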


\begin{proof}
It is variation of the proof of Proposition  \ref{int}, of which we will keep the notation.
If $U$ is again the unitary of $\ch_\pi$ obtained as the restriction of
$\pi(s_0)$ to its initial domain (which equals its range) $\mathcal{K}$, we first decompose $\mathcal{K}$ and $U$ as
$\mathcal{K}=\int^\oplus_{\s(U)} \mathcal{K}(z)\,{\rm d}\mu$ and $U=\int^\oplus_{\s(U)} z1_{\ck(z)}\di\mu$, as was done in the above mentioned result. Since $\s(U)$ is contained in $\bt$, we may as well rewrite the decompositions
as $\mathcal{K}=\int^\oplus_{\bt} \mathcal{K}(z)\, {\rm d}\mu$ and $U=\int^\oplus_{\bt} z1_{\ck(z)}\di\mu$, by
setting $ \mathcal{K}(z)=0$ for all $z\in \bt\setminus\s(U)$.
The function $m$ in the statement is then the multiplicity of this decomposition, namely
$m(z):= {\rm dim}\, \mathcal{K}(z)$, for all $z\in\bt$, where $\rm{dim}$ is the dimension of
$\mathcal{K}(z)$ as a Hilbert space, \emph{i.e.} the cardinality of any of its orthonormal bases.\\
It will be convenient to think of each $\mathcal{K}(z)$ as (isometrically isomorphic with)
$\bc^{m(z)}$ whenever $m(z)<\infty$ and as $\ell^2(\bn)$ when $m(z)=\infty$.
For each $z$ in $\bt$, we will also conceive of $\bc^{m(z)}$ as a subspace
of $\ell^2(\bn)$ whose vectors have all
$k$-th components zero if $k>m(z)$.\\
We aim to show that  with these choices of $\mu$ and $m$ the direct integral representation $\int^\oplus_\bt m(z)\widetilde{{\pi}}_{0, z}  \,{\rm d}\mu$  can be realized as a subrepresentation of the given representation $\pi$.\\
In order to define the intertwining
isometry $W$ from $\int^\oplus_\bt m(z)\ch(z)\, {\rm d}\mu$ (recall that $\ch(z)=\mathcal{F}_{wm}(\ell^2(\bn))$, for all
$z$ in $\s(U)$ and $\ch(z)=0$ for all $z$ in $\bt\setminus\s(U)\,$) to $\ch_\pi$, we first need to establish further notation.
Let $\ch^\infty$ denote the countably infinite direct sum of the weakly monotone Fock space
$\mathcal{F}_{wm}(\ell^2(\bn))$ with itself.
For all $z$ in $\bt$ we shall think of the direct sum $m(z)H(z)$ as the subspace of $\ch^\infty$ whose vectors have all
$k$-th components zero if $k>m(z)$.\\
For each $k\geq 1$, we define a measurable function $c_k: \bt\rightarrow\{0,1\}$ as
$c_k(z)=1$ if $k\leq m(z)$ and $c_k(z)=0$ otherwise.
Note that vectors of the type
$$\bt\ni z\mapsto 0\oplus\ldots\oplus \underbrace{c_k(z)f(z)e_\gamma}_{k^{\rm th}}\oplus 0\oplus \ldots\,\, {\rm in}\,\, m(z)\ch(z)$$
span a dense subspace of  $\int^\oplus_\bt m(z)\ch(z)\, {\rm d}\mu$, as $f$ varies in $L^2(\bt, \mu)$.
We are now ready to define $W$ as:
$$ W (0\oplus\ldots\oplus \underbrace{c_k f e_\gamma}_{k^{\rm th}}\oplus 0\oplus \ldots):= \pi(s_\gamma)(0, \ldots, \underbrace{c_k(z)f(z)}_{k^{\rm th}}, 0,  \ldots)\,$$
for all $k\geq 1$, $f$ in $L^2(\bt, \mu)$, $\gamma$ in $\Gamma$ (same notation as in the proof of Proposition \ref{int}).\\
With analogous calculations to those made in the proof of Proposition \ref{int} it is easy to see that $W$ is in fact an isometry and that it intertwines the two representations.\\
As for the second part of the statement, it is enough to note that ${\rm Ran}\, W$ contains $\mathcal{K}$ because
$ W (0\oplus\ldots\oplus \underbrace{c_k f e_\emptyset}_{k^{\rm th}}\oplus 0\oplus \ldots):= (0, \ldots, \underbrace{c_k(z)f(z)}_{k^{\rm th}}, 0,  \ldots)$ for all $f$ in $L^2(\bt, \mu)$.
\end{proof}

\begin{lem}
If $\pi\colon\mathcal{W}_\bn\rightarrow \cb(\ch)$ is a representation such that $\pi(A_0)\neq 0$, then $n\pi_F$ is (unitarily equivalent to) a subrepresentation of $\pi$, where $n:=\dim\rm{Ran}\,\pi(A_0)$.
\end{lem}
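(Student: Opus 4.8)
The plan is to exhibit the intertwining isometry directly, following the scheme of the proof of Lemma~\ref{multiplicity}, while exploiting that, for a representation of $\mathcal{W}_\bn$, the spectral data attached to $s_0$ degenerate to a single point. First I would unwind the hypothesis: under the identification $\mathcal{W}_\bn\cong O_{A_\bn}/J$ of Proposition~\ref{faithfulrep} the element playing the role of $s_0$ is the vacuum projection $P_\Om=A_1A_1^\dag-A_1^\dag A_1$, so that ``$\pi(A_0)\neq 0$'' means precisely that $P:=\pi(P_\Om)$ is a \emph{nonzero} projection. I would then set $n:=\dim(\mathrm{Ran}\,P)\in\bn\cup\{\infty\}$, which is at least $1$.

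Next, fixing an orthonormal basis $\{\xi_\alpha\}_{\alpha\in I}$ of $\mathrm{Ran}\,P$ (so $|I|=n$), I would, for each $\alpha$, define a map $W_\alpha:\mathcal{F}_{wm}(\ell^2(\bn))\to\ch$ on the canonical basis by $W_\alpha e_\gamma:=\pi(A^\dag_\gamma)\xi_\alpha$. Here, for $\gamma=(i_1,\ldots,i_k)$ with $i_1\geq\cdots\geq i_k\geq 1$, I write $e_\gamma=e_{i_1}\otimes\cdots\otimes e_{i_k}$ and $A^\dag_\gamma:=A^\dag_{i_1}\cdots A^\dag_{i_k}$, with the conventions $e_\emptyset=\Om$ and $A^\dag_\emptyset=I$; thus $e_\gamma=A^\dag_\gamma\Om$ and, since $P\xi_\alpha=\xi_\alpha$, one has $W_\alpha e_\gamma=\pi(A^\dag_\gamma P_\Om)\xi_\alpha$.

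The heart of the matter is the single identity
$$P_\Om A_\gamma A^\dag_\delta P_\Om=\delta_{\gamma,\delta}\,P_\Om\qquad(\gamma,\delta\in\Gamma),$$
where $A_\gamma:=(A^\dag_\gamma)^*$. This is immediate on the weakly monotone Fock space, hence holds in $\mathcal{W}_\bn$: the operator $A^\dag_\delta P_\Om$ sends $\Om$ to $e_\delta$ and annihilates $\Om^\perp$, while $A_\gamma$ strips particles off the front of $e_\delta$ and returns a multiple of $\Om$ exactly when $\gamma=\delta$. Applying $\pi$ and pairing against $\xi_\alpha,\xi_\beta$ then gives $\langle W_\alpha e_\gamma,W_\beta e_\delta\rangle=\delta_{\gamma,\delta}\langle\xi_\alpha,P\xi_\beta\rangle=\delta_{\gamma,\delta}\,\delta_{\alpha,\beta}$, which at once shows that each $W_\alpha$ extends to an isometry and that the ranges $W_\alpha(\mathcal{F}_{wm}(\ell^2(\bn)))$ are mutually orthogonal for distinct $\alpha$.

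It remains to check that each $W_\alpha$ intertwines $\pi_F$ with $\pi$. Since the set of $X\in\mathcal{W}_\bn$ with $W_\alpha\pi_F(X)=\pi(X)W_\alpha$ is a norm-closed subalgebra and $\{A^\dag_i,A_i:i\geq 1\}$ generate $\mathcal{W}_\bn$, it suffices to test these generators on the basis: for the creators one uses $A^\dag_iA^\dag_\gamma=A^\dag_{(i,\gamma)}$ when $i\geq i_1$ and $A^\dag_iA^\dag_{i_1}=0$ when $i<i_1$, while for the annihilators one uses $A_iA^\dag_j=0$ for $i\neq j$ together with $A_{i_1}A^\dag_{i_1}A^\dag_{\gamma'}=A^\dag_{\gamma'}$, where $\gamma'=(i_2,\ldots,i_k)$, all of which already hold on the Fock space. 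Assembling $W:=\bigoplus_{\alpha\in I}W_\alpha$ on $n\,\mathcal{F}_{wm}(\ell^2(\bn))$ then yields, by the orthogonality above, an isometry intertwining $n\pi_F$ with $\pi$, whose range is a reducing subspace; this is the assertion. There is no serious obstacle beyond the bookkeeping of these Fock-space identities over $\Gamma$ (note in particular that intertwining on the annihilators is \emph{not} automatic from the creators and must be verified separately); conceptually the argument simply records that, $\pi(A_0)$ being a projection, the unitary $U$ of Lemma~\ref{multiplicity} is the identity on its domain, its spectrum collapses to $\{1\}$, and the direct integral $\int^\oplus_\bt m(z)\widetilde{\pi}_{0,z}\,\mathrm{d}\mu$ reduces to $n\,\widetilde{\pi}_{0,1}=n\pi_F$.
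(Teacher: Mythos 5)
Your proof is correct and follows essentially the same route as the paper's: the paper likewise sets $n=\dim\,{\rm Ran}\,\pi(A_0)$, chooses an orthonormal basis of that range, and invokes the construction of Lemma \ref{multiplicity}, which in this situation collapses to a plain direct sum because the unitary part of $\pi(A_0)$ is the identity on its domain. You merely make explicit the intertwiner $e_\gamma\mapsto\pi(A^\dag_\gamma P_\Om)\xi_\alpha$ and the orthogonality relation $P_\Om A_\gamma A^\dag_\delta P_\Om=\delta_{\gamma,\delta}P_\Om$ that the paper leaves implicit (and your observation that $W_\alpha e_\gamma=\pi(A^\dag_\gamma P_\Om)\xi_\alpha$ quietly takes care of the edge case $\gamma'=\emptyset$ in the annihilator check).
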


\begin{proof}
By hypothesis $E:=\pi(A_0)$ is a nonzero projection. Let $n$ be the dimension of ${\rm Ran}\, E$ as a Hilbert space.
Corresponding to any choice of an orthonormal basis of ${\rm Ran}\, E$, which will have cardinality $n$,  there is a decomposition
of $E$ into a direct sum, which yields the statement exactly as in the proof of Lemma \ref{multiplicity}.
\end{proof}

\begin{thm}\label{dirin}
If $\pi$ is a non-degenerate representation of $O_{A_\bn}$ on a separable Hilbert space, then there exists a sequence
$\{ (\mu_k, m_k ): k\geq 0\}$, with $\mu_k$ being positive (possibly zero) Borel measures on $\bt$ and $m_k: \bt\rightarrow\bn\cup\{\infty\}$ measurable functions for all $k\geq 0$ such that
$$\pi\cong\, \bigoplus_{k\geq 0} \int^\oplus_\bt m_k(z) \widetilde{\pi}_{k, z}\, {\rm d}\mu_k\,,$$
where, for every $z$ in $\bt$, $m_k(z) \widetilde{\pi}_{k, z}$ is
the ampliation of $\widetilde{\pi}_{k, z}$ with multiplicity $m_k(z)$.
\end{thm}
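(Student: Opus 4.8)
The plan is to decompose $\ch_\pi$ by repeatedly peeling off subrepresentations with the help of Lemma \ref{multiplicity}, exploiting the self-similarity of $O_{A_\bn}$ recorded in Proposition \ref{iso} to produce, at the $k$-th stage, a block of type $\widetilde{\pi}_{k,z}$. I set $\ch^{(0)}:=\ch_\pi$ and $\pi_0:=\pi$, and construct inductively a decreasing chain of reducing subspaces $\ch^{(0)}\supseteq\ch^{(1)}\supseteq\cdots$ together with the restrictions $\pi_k:=\pi\upharpoonright_{\ch^{(k)}}$, arranging at each stage that $\pi_k(s_i)=0$ for all $i=0,\ldots,k-1$, i.e.\ $J_k\subseteq\ker\pi_k$. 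The subspaces $\ch_k'$ removed at each step will be mutually orthogonal and reducing, so that $\ch_\pi=\bigl(\bigoplus_{k\geq0}\ch_k'\bigr)\oplus\ch^{(\infty)}$ with $\ch^{(\infty)}:=\bigcap_k\ch^{(k)}$, and the bulk of the work is to realize each $\ch_k'$ as carrying a $\widetilde{\pi}_{k,z}$-block and to show $\ch^{(\infty)}=0$.

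Suppose $\pi_k$ has been produced with $J_k\subseteq\ker\pi_k$ (for $k=0$ take $\sigma_0:=\pi_0=\pi$ directly, as $\widetilde{\pi}_{0,z}$ is already the base type). For $k\geq1$, $\pi_k$ factors as $\pi_k=\bar\pi_k\circ p_k$ for a representation $\bar\pi_k$ of $O_{A_\bn}/J_k$, and I form $\sigma_k:=\bar\pi_k\circ\Psi_k$, a representation of $O_{A_\bn}$ on $\ch^{(k)}$. From $\Psi_k(s_i)=[s_{i+k}]_{J_k}$ one reads off $\sigma_k(s_i)=\pi_k(s_{i+k})$, so $\sigma_k(s_0)=\pi_k(s_k)$. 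If $\sigma_k(s_0)=0$ I put $\mu_k:=0$ and $\ch^{(k+1)}:=\ch^{(k)}$; otherwise I apply Lemma \ref{multiplicity} to $\sigma_k$, obtaining a measure $\mu_k$, a multiplicity function $m_k$, and an isometry $W_k$ intertwining $\int^\oplus_\bt m_k(z)\widetilde{\pi}_{0,z}\,{\rm d}\mu_k$ with $\sigma_k$, whose range is reducing and on whose orthogonal complement $\sigma_k(s_0)$ vanishes. Because $\Psi_k$ and $p_k$ are surjective, $\sigma_k$ and $\pi_k$ have the same image in $\cb(\ch^{(k)})$, hence the same reducing subspaces, so $\ch_k':={\rm Ran}\,W_k$ is reducing for $\pi_k$ (and for $\pi$), and I set $\ch^{(k+1)}:=\ch^{(k)}\ominus\ch_k'$.

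The identification of the peeled block rests on $\pi_k=\sigma_k\circ\Psi_k^{-1}\circ p_k$: composing the equivalence $\sigma_k\upharpoonright_{\ch_k'}\cong\int^\oplus_\bt m_k(z)\widetilde{\pi}_{0,z}\,{\rm d}\mu_k$ on the right with the endomorphism $\Psi_k^{-1}\circ p_k$ and using $\widetilde{\pi}_{0,z}\circ\Psi_k^{-1}\circ p_k=\widetilde{\pi}_{k,z}$ yields
$$\pi\upharpoonright_{\ch_k'}\cong\int^\oplus_\bt m_k(z)\widetilde{\pi}_{k,z}\,{\rm d}\mu_k\,.$$
On $\ch^{(k+1)}$ the operator $\sigma_k(s_0)$ vanishes, that is $\pi_{k+1}(s_k)=0$; together with the inherited relations $\pi_{k+1}(s_i)=0$ for $i<k$ this gives $J_{k+1}\subseteq\ker\pi_{k+1}$, closing the induction. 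The main technical care is exactly this bookkeeping through $\Psi_k$, plus the measurability of the fields $z\mapsto\widetilde{\pi}_{k,z}$, both of which are inherited from Proposition \ref{iso} and Lemma \ref{multiplicity}.

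It remains to prove $\ch^{(\infty)}=0$, and this is where the non-degeneracy hypothesis enters; I expect it to be the conceptual crux even though the argument is brief. For $\xi\in\ch^{(\infty)}$ and every index $i$ one has $\pi(s_i)\xi=\pi_k(s_i)\xi=0$ as soon as $k>i$, and since the $s_i$ together with their adjoints generate $O_{A_\bn}$ as a $C^*$-algebra while $\ch^{(\infty)}$ is reducing, the restriction $\pi\upharpoonright_{\ch^{(\infty)}}$ is the zero representation. A non-degenerate representation can have no nonzero reducing subspace on which it vanishes, because evaluating an approximate unit $(e_\lambda)$ of $O_{A_\bn}$ gives $\pi(e_\lambda)\xi=0$ for all $\lambda$ yet $\pi(e_\lambda)\xi\to\xi$, forcing $\xi=0$. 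Hence $\ch^{(\infty)}=0$, so $\ch_\pi=\bigoplus_{k\geq0}\ch_k'$ and
$$\pi\cong\bigoplus_{k\geq0}\int^\oplus_\bt m_k(z)\widetilde{\pi}_{k,z}\,{\rm d}\mu_k\,,$$
which is the asserted decomposition.
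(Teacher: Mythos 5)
Your proposal is correct and follows essentially the same route as the paper's proof: iteratively peel off a direct integral block via Lemma \ref{multiplicity} after transporting through $\Psi_k$ and $p_k$, then kill the residual space using non-degeneracy. The only (harmless) differences are bookkeeping ones — you index every $k\geq 0$ and allow $\mu_k=0$ where the paper skips to the next nonvanishing generator, and you spell out the reducing-subspace and approximate-unit details that the paper leaves implicit.
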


\begin{proof}
Since $\pi$ is non-degenerate, the set $\{i\geq 0: \pi(s_i)\neq 0\}$ is not empty. Let $n$ be its minimum. As
in the proof of Proposition \ref{catalogue}, $\pi$ factors as $\pi=\rho\circ p_n$, where $\rho$ is an irreducible representation of the quotient algebra $O_{A_\bn}/J_n\cong O_{A_\bn}$. Up to this isomorphism,
$\rho$ can be seen as a representation $\pi'$ of $O_{A_\bn}$ such that $\pi'(s_0)\neq 0$.
By applying Lemma \ref{multiplicity}, we then see that $\pi$
must contain a subrepresentation of the type
$\int^\oplus_\bt \widetilde{\pi}_{n, z}\, {\rm d}\mu_n$, for some measure $\mu_n$.
It is now clear how to
go on. Denote by $\ch_n\subset\ch$ the $\pi$-invariant subspace corresponding to the above subrepresentation.
If $\ch_n=\ch_\pi$, there is nothing to do. Otherwise we can consider its orthogonal complement
$\ch_n^\perp$, and repeat the procedure by considering the minimum, say $n'$,  of the set
$\{i\geq 0: \pi(s_i)\upharpoonright_{\ch_n^\perp}\neq 0\}$. Note that $n' > n$.
As in the first part of the proof, there exists
a positive measure $\mu_{n'}$ such that $\int^\oplus_\bt \widetilde{\pi}_{n', z}\, {\rm d}\mu_{n'}$
is a subrepresentation of $\pi_{\ch_n^\perp}$, which acts on $\ch_{n'}\subset \ch_n^\perp$.
If $\ch_n\oplus\ch_{n'}$
exhaust $\ch_\pi$, we are done. If $\ch_n\oplus\ch_{n'}$ is a proper subspace of $\ch_\pi$, then we can reason as above on $(\ch_n\oplus\ch_{n'})^\perp$.
By continuing to reason in this way, we get a (possibly finite) sequence of mutually orthogonal $\pi$-invariant
subspaces, on each of which $\pi$ is (unitarily equivalent to) a direct integral of the type above. By construction, the orthogonal
complement of the direct sum of these subspaces yields a degenerate representation. But then this orthogonal complement
must be zero by hypothesis since $\pi$ is assumed non-degenerate.
\end{proof}

\begin{cor}
If $\pi$ is a non-degenerate representation of $\mathcal{W}_\bn$ on a separable Hilbert space, then there exist $n\in\bn\cup\{\infty\}$ and a sequence $\{ (\mu_k, m_k ): k\geq 1\}$, with $\mu_k$ being positive (possibly zero) Borel measures on $\bt$ and $m_k: \bt\rightarrow\bn\cup\{\infty\}$ measurable functions for all $k\geq 0$ such that
$$\pi\cong\, n\pi_{F}\oplus\bigg(\bigoplus_{k\geq 1} \int^\oplus_\bt m_k(z) \widetilde{\pi}_{k, z}\, {\rm d}\mu_k\bigg)\, .$$
\end{cor}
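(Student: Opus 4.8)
The plan is to reduce everything to Theorem~\ref{dirin} by means of the identification $\mathcal{W}_\bn\cong O_{A_\bn}/J$ furnished by Proposition~\ref{faithfulrep}. Given a non-degenerate representation $\pi$ of $\mathcal{W}_\bn$ on a separable Hilbert space, I would first pull it back along the quotient map $q:O_{A_\bn}\to O_{A_\bn}/J\cong\mathcal{W}_\bn$ to obtain $\widetilde{\pi}:=\pi\circ q$, a non-degenerate representation of $O_{A_\bn}$ on the very same space (non-degeneracy is preserved since $\widetilde{\pi}(O_{A_\bn})$ and $\pi(\mathcal{W}_\bn)$ consist of the same operators). The crucial feature inherited from factoring through $J$ is that $\widetilde{\pi}$ annihilates the generator $s_0-s_0s_0^*$ of $J$; equivalently, $\widetilde{\pi}(s_0)$ is a \emph{projection}.

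Next I would apply Theorem~\ref{dirin} to $\widetilde{\pi}$, obtaining a decomposition $\widetilde{\pi}\cong\bigoplus_{k\geq 0}\int^\oplus_\bt m_k(z)\widetilde{\pi}_{k,z}\,{\rm d}\mu_k$. The summands with $k\geq 1$ require no adjustment: since $\widetilde{\pi}_{k,z}=\widetilde{\pi}_{0,z}\circ\Psi_k^{-1}\circ p_k$ and $p_k$ kills $s_0$, we have $\widetilde{\pi}_{k,z}(s_0)=0$, so each of these pieces already descends to a representation of $\mathcal{W}_\bn$ and is kept verbatim, contributing the terms $\int^\oplus_\bt m_k(z)\widetilde{\pi}_{k,z}\,{\rm d}\mu_k$ for $k\geq 1$.

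The content of the statement is concentrated in the $k=0$ summand. Here $\widetilde{\pi}_{0,z}(s_0)=zP_\Om$, so the $k=0$ component of $\widetilde{\pi}(s_0)$ is $\int^\oplus_\bt m_0(z)\,zP_\Om\,{\rm d}\mu_0$. Because $\widetilde{\pi}(s_0)$ is a projection, and a direct integral of operators is a projection precisely when its fibres are projections $\mu_0$-a.e., the fibre $zP_\Om$ must be a projection for $\mu_0$-a.e. $z$. Since $(zP_\Om)^2=z^2P_\Om$ and $(zP_\Om)^*=\overline{z}P_\Om$, the operator $zP_\Om$ is a projection if and only if $z=1$; hence $\mu_0$ is concentrated at the single point $z=1$. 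A direct integral over one atom collapses to a multiple of the corresponding fibre representation, so the $k=0$ summand is unitarily equivalent to $n\pi_F$, where $\pi_F=\widetilde{\pi}_{0,1}$ and $n:=m_0(1)\in\bn\cup\{\infty\}$ (with $n=0$ when $\mu_0(\{1\})=0$). Assembling the $k=0$ and $k\geq 1$ parts yields the claimed form $\pi\cong n\pi_F\oplus\big(\bigoplus_{k\geq 1}\int^\oplus_\bt m_k(z)\widetilde{\pi}_{k,z}\,{\rm d}\mu_k\big)$.

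The only point demanding genuine care — and the step I expect to be the main obstacle — is this measure-theoretic reduction of the $k=0$ fibre: justifying rigorously both that a direct integral is a projection iff its fibres are projections $\mu_0$-a.e., and that a direct integral supported on the single atom $z=1$ is unitarily equivalent to $m_0(1)$ copies of $\widetilde{\pi}_{0,1}$. Everything else is routine bookkeeping transported from the proofs of Theorem~\ref{dirin} and Lemma~\ref{multiplicity}.
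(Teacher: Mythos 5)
Your argument is correct and follows essentially the route the paper intends: pull the representation back to $O_{A_\bn}$ along the quotient by $J$, apply Theorem~\ref{dirin}, and observe that the $k=0$ piece collapses to copies of $\pi_F$ because $\widetilde{\pi}(s_0)$ is a projection. The step you flag as the main obstacle is in fact immediate rather than delicate: since $\widetilde{\pi}(s_0)$ is a projection, the unitary $U$ of Lemma~\ref{multiplicity} is the identity on its initial space $\mathcal{K}$, so $\sigma(U)=\{1\}$, $\mu_0=\delta_1$, and the direct integral degenerates to the ampliation $m_0(1)\,\pi_F$ with $m_0(1)=\dim\mathcal{K}$ --- which is precisely the content of the paper's unnumbered lemma preceding Theorem~\ref{dirin}, asserting that $n\pi_F$ embeds as a subrepresentation whenever $\pi(P_\Om)\neq 0$.
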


\subsection{Weakly anti-monotone $C^*$-algebra}\label{anti}
Reversing the order in which the indices appear, one can consider a weakly anti-monotone  Fock space on $\bn$:
$$\mathcal{F}_{wam}(\ell^2(\bn)):=\bigoplus_{k=0}^\infty H_k\,$$
where $H_k$ is the linear span of the set of vectors $e_{i_1}\otimes e_{i_2}\otimes\cdots\otimes e_{i_k}$ with
$k\in\bn$   and $1\leq i_1\leq i_2\leq\ldots\leq i_k$, whereas $H_0=\bc\Om$.\\
On $\mathcal{F}_{wam}(\ell^2(\bn))$ weakly anti-monotone creation (annihilation) operators
$A_i^\dag$ ($A_i$), $i\geq 1$, can be defined  as we did in the previous sections.
Precisely, the action of $A^\dagger_i$ on the orthonormal basis of
$\mathcal{F}_{wam}(\ch)$ is $A^\dagger_i \Om= e_i$ and
\begin{equation*}
A^\dagger_i\,e_{i_1}\otimes e_{i_2}\otimes\cdots\otimes e_{i_k}:=\left\{
\begin{array}{ll}
e_i\otimes e_{i_1}\otimes e_{i_2}\otimes\cdots\otimes e_{i_k}& \text{if}\,\, i\leq i_1 \\
0 & \text{otherwise}, \\
\end{array}
\right.
\end{equation*}
As usual, we denote by $A_i$ the adjoint of $A^\dagger_i$ for all $i\geq 1$.\\
We denote by $\mathcal{W}^a_\bn\subset \cb(\mathcal{F}_{wam}(\ell^2(\bn)))$ the $C^*$-subalgebra generated
by the set $\{A_i, A_i^\dag: i\geq 1\}$ and refer to it as the anti-monotone $C^*$-algebra.\\
Note that $\mathcal{W}^a_\bn$ is a unital $C^*$-algebra, for $A_1A^\dagger_1$ is the identity on
$\mathcal{F}_{wam}(\ell^2(\bn))$.

\begin{prop}
The orthogonal projection onto the vacuum does not belong to $\mathcal{W}^a_\bn$.
\end{prop}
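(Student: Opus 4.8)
The plan is to argue by contradiction in the same spirit as Proposition~\ref{vacumnotalg}, but comparing the vacuum vector state with the vector states attached to single particles of very high index. Write $\mathcal{W}^a_0$ for the dense $*$-subalgebra of $\mathcal{W}^a_\bn$ generated by $\{A_i, A_i^\dag : i\geq 1\}$, so that every element of $\mathcal{W}^a_0$ is a finite linear combination of words (monomials) in the creators and annihilators. The key device I would isolate is the following \emph{spectator-particle} identity: if $w$ is such a word and every index occurring in $w$ is $\leq N$, then for every integer $s>N$ one has
$$\langle w\, e_s, e_s\rangle=\langle w\,\Om, \Om\rangle\,.$$
By linearity this upgrades to: for any $X\in\mathcal{W}^a_0$ and any $s$ strictly larger than all indices appearing in $X$, one has $\langle X e_s, e_s\rangle=\langle X\Om,\Om\rangle$.

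I would establish the identity by induction on the length of $w$, tracking the action read from right to left. In the anti-monotone Fock space a creator $A_j^\dag$ prepends $e_j$ only to tensors whose leading (smallest) index is $\geq j$, while an annihilator strips the leading factor; thus all activity takes place at the \emph{front} of a simple tensor. Starting from $e_s$, the trailing factor $e_s$ can never be removed, since no $A_s$ occurs in $w$ (all indices are $<s$); and whenever the stack in front of $e_s$ has been emptied, a creator $A_j^\dag$ sees leading index $s$, so the admissibility condition $j\leq s$ is automatic — exactly mirroring the fact that a creator always acts nontrivially on $\Om$. Hence the reduction of $w\,e_s$ proceeds in lockstep with that of $w\,\Om$: one gets $w\,e_s=0$ precisely when $w\,\Om=0$, and otherwise $w\,e_s=(w\,\Om)\otimes e_s$, the appended $e_s$ remaining a legitimate (largest) trailing index because $w\,\Om$ involves only indices $\leq N<s$. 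Taking the $e_s$-component on the left then matches the $\Om$-component of $w\,\Om$, which is the claim.

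With the identity in hand the contradiction is immediate, and this is where I would close the argument. Suppose $P_\Om\in\mathcal{W}^a_\bn$ and choose $X\in\mathcal{W}^a_0$ with $\|P_\Om-X\|<\tfrac12$. Fix $s$ larger than every index occurring in $X$ and set $c:=\langle X\Om,\Om\rangle=\langle X e_s, e_s\rangle$. Since $P_\Om\Om=\Om$ while $P_\Om e_s=0$, the two estimates $|c-1|=|\langle(X-P_\Om)\Om,\Om\rangle|<\tfrac12$ and $|c|=|\langle(X-P_\Om)e_s, e_s\rangle|<\tfrac12$ hold at once, whence $1\leq|c|+|c-1|<1$, which is absurd. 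The main obstacle is precisely the spectator-particle identity: one must check carefully that appending a sufficiently large index at the bottom of a simple tensor leaves the diagonal vacuum matrix coefficient of every word unchanged. This rests on two anti-monotone features — that creation against an index exceeding all operator indices is \emph{unconstrained} (just as on $\Om$), and that such a bottom particle is never annihilated — which must be seen to persist through every intermediate step, and that is what the induction secures. Everything else reduces to the routine norm estimate already used for $\mathcal{W}$.
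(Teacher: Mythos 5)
Your proof is correct and follows essentially the same route as the paper's: the paper, too, obtains the contradiction by testing a close approximant $X$ against the vacuum and against a one-particle vector $e_s$ with $s$ beyond every index occurring in $X$ (taking the maximum rather than, as over $\bz$, the minimum). The only cosmetic difference is that you phrase the comparison as a diagonal matrix-coefficient identity $\langle w e_s, e_s\rangle=\langle w\Om,\Om\rangle$ valid for arbitrary words, whereas the paper first normalizes $X$ via the Hamel basis into $T+\sum_k\a_k A_kA_k^\dag$ and estimates vector norms; your spectator-particle lemma is sound and, if anything, slightly more self-contained.
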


\begin{proof}
It is a straightforward adaptation of the proof of Proposition \ref{vacumnotalg}: it is enough to replace
the $s$ there with the maximum of the set of all indices appearing in the words of $X_n$
(note that now indices appear in increasing order).
\end{proof}
Again, from the fact that the projection onto the vacuum does not sit in $\mathcal{W}^a_\bn$ we see
that  $\mathcal{W}^a_\bn$ cannot be type I.
\begin{cor}\label{nottypeIanti}
$\mathcal{W}^a_\bn$ is not a type-I $C^*$-algebra.
\end{cor}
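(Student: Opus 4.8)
The plan is to transcribe the proof of Corollary~\ref{nottypeI} to the anti-monotone setting: I will show that the defining (Fock) representation of $\mathcal{W}^a_\bn$ on $\mathcal{F}_{wam}(\ell^2(\bn))$ is irreducible, observe that its range omits the compact operator $P_\Om$ by the preceding proposition, and then invoke the Glimm characterization of separable type-$I$ $C^*$-algebras \cite{G}. Separability is automatic, since $\mathcal{W}^a_\bn$ is generated by the countable family $\{A_i, A_i^\dag : i\geq 1\}$.

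The one step that requires an argument is the irreducibility of the Fock representation, i.e. that the commutant $(\mathcal{W}^a_\bn)'$ equals $\bc I$. First I would check that $\Om$ is cyclic: applying creators in nondecreasing order gives $A_{i_1}^\dag A_{i_2}^\dag\cdots A_{i_k}^\dag\Om = e_{i_1}\otimes\cdots\otimes e_{i_k}$ for every admissible string $1\leq i_1\leq\cdots\leq i_k$, so $\overline{\mathcal{W}^a_\bn\,\Om}$ contains every basis vector of $\mathcal{F}_{wam}(\ell^2(\bn))$. Next I would show that $P_\Om$, although absent from the $C^*$-algebra, belongs to the von Neumann algebra $(\mathcal{W}^a_\bn)''$. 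A direct computation shows that $A_iA_i^\dag$ is the projection onto the closed span of $\Om$ together with all basis vectors $e_{i_1}\otimes\cdots$ with $i_1\geq i$; hence $\{A_iA_i^\dag\}_{i\geq 1}$ is a decreasing sequence of projections whose ranges intersect in $\bc\Om$, so it converges strongly to $P_\Om$. Since $\mathcal{W}^a_\bn$ is unital (indeed $A_1A_1^\dag = I$), the bicommutant theorem identifies $(\mathcal{W}^a_\bn)''$ with the strong-operator closure of $\mathcal{W}^a_\bn$, and therefore $P_\Om\in(\mathcal{W}^a_\bn)''$.

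With these two facts in hand, irreducibility follows exactly as at the end of Proposition~\ref{selfad}. Let $E$ be a projection in $(\mathcal{W}^a_\bn)'$. Because $P_\Om\in(\mathcal{W}^a_\bn)''$, it commutes with $E$, so $E$ leaves $\bc\Om$ invariant and $E\Om$ is either $0$ or $\Om$. If $E\Om = 0$, then $Ea\Om = aE\Om = 0$ for all $a\in\mathcal{W}^a_\bn$, and cyclicity of $\Om$ forces $E = 0$; if $E\Om = \Om$, the same argument applied to the projection $I-E$ yields $E = I$. As the von Neumann algebra $(\mathcal{W}^a_\bn)'$ is generated by its projections, we conclude $(\mathcal{W}^a_\bn)' = \bc I$.

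To finish, suppose toward a contradiction that $\mathcal{W}^a_\bn$ were of type $I$. By Glimm's theorem the range of every irreducible representation must contain all compact operators; applied to the irreducible Fock representation, this would place the rank-one projection $P_\Om$ in $\mathcal{W}^a_\bn$, contradicting the preceding proposition. Hence $\mathcal{W}^a_\bn$ is not type $I$. I expect the only genuinely delicate point to be the irreducibility step, and within it the need to produce $P_\Om$ in the bicommutant (strong closure) rather than in the norm-closed algebra; the remaining verifications are routine adaptations of the integer-indexed case.
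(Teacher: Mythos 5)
Your proof is correct and follows essentially the same route as the paper: the paper's one-line argument simply transplants Corollary \ref{nottypeI} (an irreducible Fock representation whose range misses the compact operator $P_\Om$, then Glimm's characterization), with the preceding proposition supplying $P_\Om\notin\mathcal{W}^a_\bn$. The only difference is that you spell out the irreducibility of the defining representation (cyclicity of $\Om$ together with $A_iA_i^\dag\to P_\Om$ strongly, placing $P_\Om$ in the bicommutant), a point the paper leaves implicit; your verification of that step is accurate.
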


\section*{Acknowledgments}
\noindent
We would like to thank the anonymous referee for their careful reading and valuable suggestions.\\
The first three authors acknowledge  the support of Italian INDAM-GNAMPA, being
partially supported by Progetto GNAMPA 2023 CUP E53C22001930001 ``Metodi di Algebre di Operatori in Probabilit\`a non Commutativa".    The first three authors are also
supported by Italian PNRR Partenariato Esteso PE4, NQSTI, and Centro Nazionale CN00000013
CUP H93C22000450007.\\
Finally, this research is part of the EU Staff Exchange project 101086394 “Operator
Algebras That One Can See”.

\end{document}